\def\@tocline#1#2#3#4#5#6#7{\relax
  \ifnum #1>\c@tocdepth 
  \else
    \par \addpenalty\@secpenalty\addvspace{#2}%
    \begingroup \hyphenpenalty\@M
    \@ifempty{#4}{%
      \@tempdima\csname r@tocindent\number#1\endcsname\relax
    }{%
      \@tempdima#4\relax
    }%
    \parindent\z@ \leftskip#3\relax \advance\leftskip\@tempdima\relax
    \rightskip\@pnumwidth plus4em \parfillskip-\@pnumwidth
    #5\leavevmode\hskip-\@tempdima
      \ifcase #1
       \or\or \hskip 1em \or \hskip 2em \else \hskip 3em \fi%
      #6\nobreak\relax
    \dotfill\hbox to\@pnumwidth{\@tocpagenum{#7}}\par
    \nobreak
    \endgroup
  \fi}
\newcommand{\qedhere}{}
\newtheorem{defn}{Definition}[section]
\newtheorem{ex}[defn]{Example}
\newtheorem{rmk}[defn]{Remark}
\newtheorem{question}[defn]{Question}
\newtheorem{thm}[defn]{Theorem}
\newtheorem{lem}[defn]{Lemma}
\newtheorem{prop}[defn]{Proposition}
\newtheorem{conj}[defn]{Conjecture}
\def\C{\ensuremath{\mathbb{C}}}
\def\N{\ensuremath{\mathbb{N}}}
\def\P{\ensuremath{\mathbb{P}}}
\def\Q{\ensuremath{\mathbb{Q}}}
\def\R{\ensuremath{\mathbb{R}}}
\def\Z{\ensuremath{\mathbb{Z}}}
\def\FF{\ensuremath{\mathcal F}}
\def\OO{\ensuremath{\mathcal O}}
\def\TT{\ensuremath{\mathcal T}}
\def\ch{\mathop{\mathrm{ch}}\nolimits}
\def\Coh{\mathop{\mathrm{Coh}}\nolimits}
\def\Db{\mathop{\mathrm{D}^{\mathrm{b}}}\nolimits}
\def\dim{\mathop{\mathrm{dim}}\nolimits}
\def\ext{\mathop{\mathrm{ext}}\nolimits}
\def\Ext{\mathop{\mathrm{Ext}}\nolimits}
\def\hom{\mathop{\mathrm{hom}}\nolimits}
\def\Hom{\mathop{\mathrm{Hom}}\nolimits}
\def\RlHom{\mathop{\mathbf{R}\mathcal Hom}\nolimits}
\def\min{\mathop{\mathrm{min}}\nolimits}
\def\Pic{\mathop{\mathrm{Pic}}\nolimits}
\def\td{\mathop{\mathrm{td}}\nolimits}
\def\into{\ensuremath{\hookrightarrow}}
\def\onto{\ensuremath{\twoheadrightarrow}}
\title{Bridgeland Stability Conditions on Fano Threefolds}
\author{Marcello Bernardara, Emanuele Macr\`i, Benjamin Schmidt, and Xiaolei Zhao}
\institution{Institut de Math\'ematiques de Toulouse, Universit\'e Paul Sabatier, 118 route de Narbonne, 31062 Toulouse Cedex 9, France}\\
\email{marcello.bernardara@math.univ-toulouse.fr}}
\institution{Northeastern University, Department of Mathematics, 360 Huntington Avenue, Boston, MA 02115-5000, USA}\\
\email{e.macri@northeastern.edu}}
\institution{The Ohio State University, Department of Mathematics, 231 W 18th Avenue, Columbus, OH 43210-1174, USA\\
{\it Current address:} Department of Mathematics, The University of Texas at Austin, 2515 Speedway Stop C1200, Austin, TX 78712-1202, USA}\\
\email{schmidt@math.utexas.edu}}
\institution{Northeastern University, Department of Mathematics, 360 Huntington Avenue, Boston, MA 02115-5000, USA}\\
\email{x.zhao@northeastern.edu}}
\date{\vspace{-5ex}} 
\journal{\'Epijournal de G\'eom\'etrie Alg\'ebrique} 
\begin{document}


\maketitle


\begin{prelims}

\def\abstractname{Abstract}
\abstract{We show the existence of Bridgeland stability conditions on all Fano threefolds, by proving a modified version of a conjecture by Bayer, Toda, and the second author.
The key technical ingredient is a strong Bogomolov inequality, proved recently by Chunyi Li.
Additionally, we prove the original conjecture for some toric threefolds by using the toric Frobenius morphism.}

\keywords{Stability conditions, Derived categories, Fano threefolds}

\MSCclass{14F05 (Primary); 14J30, 18E30 (Secondary)}

\vspace{0.1cm}

\languagesection{Fran\c{c}ais}{%

\textbf{Titre. Conditions de stabilit\'e de Bridgeland sur les vari\'et\'es de Fano de dimension~3}
\commentskip
\textbf{R\'esum\'e.}
Nous \'etablissons l'existence de conditions de stabilit\'e de Bridgeland sur toute vari\'et\'e de Fano de dimension 3 en montrant une version modifi\'ee d'une conjecture de Bayer, Toda et du deuxi\`eme auteur. L'ingr\'edient technique essentiel est une in\'egalit\'e forte de Bogomolov, d\'emontr\'ee r\'ecemment par Chunyi Li. En outre, nous montrons la conjecture originale pour certaines vari\'et\'es toriques de dimension 3 via l'utilisation du morphisme de Frobenius torique.}

\vspace{2mm}

\end{prelims}


\newpage

\setcounter{tocdepth}{1}
\tableofcontents

\section{Introduction}
\label{sec:intro}

In the recent article \cite{Li15:conjecture_fano_threefold}, Chunyi Li proved the existence of Bridgeland stability conditions on Fano threefolds of Picard number 1. The purpose of this paper is to extend Li's method to any Fano threefold, when we choose the polarization to be the anti-canonical divisor.

Tilt stability was introduced in \cite{BMT14:stability_threefolds} as an intermediate notion to define Bridgeland stability on threefolds.
Its definition mimics Bridgeland's original construction in the surface case \cite{Bri08:stability_k3,AB13:k_trivial}.
Let $X$ be a smooth projective complex threefold, and let $H$ be a polarization on $X$. Also, let $\alpha,\beta\in\R$, $\alpha>0$.
The usual notion of slope stability (with respect to $H$) induces a family of torsion pairs on the category of coherent sheaves $\Coh(X)$, parameterized by $\beta$, roughly obtained by truncating Harder-Narasimhan filtrations at the slope $\beta$.
We denote the corresponding family of tilted abelian categories by $\Coh^\beta(X)$.
Tilt stability is the (weak) stability condition defined on $\Coh^\beta(X)$ with respect to the slope
\[
\nu_{\alpha,\beta} := \frac{H \cdot \ch_2^\beta - \frac{\alpha^2}{2} H^3 \cdot \ch_0^\beta}{H^2 \cdot \ch_1^\beta},
\]
where, as usual, dividing by $0$ equals to $+\infty$.
Here $\ch^\beta$ denotes the twisted Chern character $\ch \cdot e^{-\beta H}$.
We will review all of this in Section \ref{sec:tilt}.

The intuition is that tilt stability on threefolds should play the role of usual slope stability of sheaves on surfaces.
By keeping this idea in mind, we should look for a generalization of the classical Bogomolov inequality for slope stable sheaves on surfaces to tilt stability.
This is exactly the content of our main theorem.
An analogous result, with a similar proof, appears independently in \cite[Theorem 1.3]{Piy16:Fano}.

\begin{thm}\label{thm:main}
Let $X$ be a smooth projective Fano threefold of index $i_X$ and let $H = - \frac{K_X}{i_X}$.
There exists a cycle $\Gamma\in A_1(X)_\R$ such that $\Gamma \cdot H \geq 0$ and, for any $\nu_{\alpha,\beta}$-stable object $E$ with $\nu_{\alpha,\beta}(E) = 0$, we have
\[
\ch_3^\beta(E) - \Gamma \cdot \ch_1^\beta(E) - \frac{\alpha^2}{6} H^2 \cdot \ch_1^{\beta}(E) \leq 0.
\]
\end{thm}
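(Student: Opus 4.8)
The plan is to reduce the statement to the strong Bogomolov inequality of Li \cite{Li15:conjecture_fano_threefold} applied to a tilt-stable object, after twisting it by an appropriate line bundle so that Chern characters get shifted in the way the theorem demands. The starting point is the observation of \cite{BMT14:stability_threefolds} that $\nu_{\alpha,\beta}$-stable objects $E$ with $\nu_{\alpha,\beta}(E) = 0$ are exactly the objects for which one expects a generalized Bogomolov inequality in $\ch_3$; concretely, I would first recall that for such an $E$ one has the ``discriminant-zero wall'' picture: the point $(\beta, \alpha)$ lies on the boundary of the numerical wall where $\nu_{\alpha,\beta}(E)=0$, and stability there forces constraints on $\ch_3^\beta(E)$.

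The key step is to exploit the invariance of tilt stability under tensoring by line bundles: if $L = \OO_X(D)$ for a divisor $D$, then $E \otimes L$ is $\nu_{\alpha,\beta+D\cdot H/H^3}$-stable (after suitably shifting $\beta$), and its twisted Chern character satisfies $\ch^{\beta'}(E\otimes L) = \ch^{\beta}(E)$ with $\beta' = \beta + (D\cdot H)/H^3$. More importantly, on a Fano threefold one has extra control: for the anticanonical polarization $H = -K_X/i_X$, the classes $\ch_3$ of line bundles and of structure sheaves of curves are computed by Riemann--Roch, and the correction term $\Gamma$ should be built precisely out of the Todd class contributions, namely $\Gamma$ will be (a multiple of) $\frac{1}{24}c_2(X) \cdot (\text{something})$ plus terms forced by the index $i_X$. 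So I would (i) write down Li's inequality in the form $\ch_3^\beta(E) \le f(\ch_0^\beta, \ch_1^\beta, \ch_2^\beta)$ valid for tilt-semistable $E$ on $X$ with the anticanonical polarization, (ii) specialize to the locus $\nu_{\alpha,\beta}(E)=0$, which eliminates $\ch_0^\beta$ (or $\ch_2^\beta$) in favor of the others via $H\cdot\ch_2^\beta = \frac{\alpha^2}{2}H^3\cdot\ch_0^\beta$, and (iii) check that the resulting bound is of the shape $\Gamma\cdot\ch_1^\beta(E) + \frac{\alpha^2}{6}H^2\cdot\ch_1^\beta(E)$, thereby reading off $\Gamma$ and verifying $\Gamma\cdot H \ge 0$ from the Fano condition (ampleness of $H$ and positivity of $c_2(X)\cdot H$, which holds on Fano threefolds).

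For the passage from Picard rank 1 to arbitrary Picard rank, the main point is that Li's Bogomolov-type inequality is really a statement about the pair $(X,H)$ together with the numerical data $\ch^\beta$, and its proof goes through restriction to surfaces in $|H|$ (or $|mH|$) and the classical Bogomolov inequality there; since $H = -K_X/i_X$ is ample, a general such surface is smooth and again has the needed positivity, so the argument is insensitive to $\rho(X)$. I would therefore cite Li's result in the form that already allows general Fano threefolds with the anticanonical polarization — if the literature only states it for $\rho = 1$, I would indicate that the proof (restriction to an anticanonical surface, then invoke surface Bogomolov plus Hodge index) extends verbatim, the only change being bookkeeping of intersection numbers in $A_\bullet(X)_\R$ rather than in $\Z$.

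The main obstacle I anticipate is \emph{not} the stability theory but the explicit identification of $\Gamma$ and the verification that the leftover term after substituting $\nu_{\alpha,\beta}(E)=0$ is \emph{linear} in $\ch_1^\beta(E)$ with no residual dependence on the full vector $\ch_1^\beta$ beyond an intersection with a fixed cycle. This requires carefully tracking how Li's inequality — which a priori bounds $\ch_3^\beta$ by a quadratic expression in lower Chern characters — degenerates on the codimension-one locus $\nu_{\alpha,\beta}(E)=0$; the quadratic terms in $\ch_0^\beta$ must cancel or combine into exactly the $\frac{\alpha^2}{6}H^2\cdot\ch_1^\beta$ piece plus a $\ch_0^\beta$-independent remainder, and it is this remainder that defines $\Gamma$. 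Checking $\Gamma \cdot H \ge 0$ then amounts to a positivity statement for $c_2(X)\cdot H$ on Fano threefolds, which is classical (Miyaoka, or directly from the fact that $c_2(X)$ is represented by an effective curve class, or via boundedness). A secondary subtlety is ensuring the inequality is stated for \emph{all} $\alpha > 0$ with a \emph{single} cycle $\Gamma$ independent of $(\alpha,\beta)$ — this should follow because the $\alpha$-dependence has been isolated into the explicit $\frac{\alpha^2}{6}H^2\cdot\ch_1^\beta$ term by construction, but it needs to be confirmed.
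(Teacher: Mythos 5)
Your plan has a genuine gap at its core: you propose to quote Li's inequality ``in the form that already allows general Fano threefolds'' and, if it is only stated for $\rho=1$, to assert that its proof ``extends verbatim'' to higher Picard rank. It does not, and in fact it cannot: Li's statement is exactly Conjecture \ref{conj:bmt} with $\Gamma=0$, and this is \emph{false} on the blow-up of $\P^3$ in a point (a Fano threefold, so squarely inside the theorem), by the counterexample of \cite{Sch16:counterexample}; the structure sheaf of the exceptional divisor gives another failure \cite{Mar16:counterexample}. This is precisely why the theorem needs a nonzero $\Gamma$, and why no formal manipulation of a $\rho=1$ inequality on the locus $\nu_{\alpha,\beta}(E)=0$ can produce it. Relatedly, you misdescribe the method: neither Li's proof nor its extension here proceeds by restriction to surfaces in $|H|$; the engine is an Euler-characteristic estimate. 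The actual work in the paper is (i) a higher-Picard-rank discriminant bound (Theorem \ref{thm:bogomolov_extension}): any tilt-stable $E$ with $\ch_0(E)\neq 0$, $0\leq\beta_-(E)\leq\beta_+(E)<1$, other than shifts of $\OO_X$ or ideal sheaves of points, satisfies $\overline{\Delta}_H(E)/(H^3\ch_0(E))^2\geq\min\{1/(H^3)^2,\,3/(2i_XH^3)\}$, proved by passing to a minimal counterexample that is stable for all $(\alpha,\beta)$, using stability to force $\ext^2(E,E)=0$, and contradicting $\chi(E,E)\leq 1$ via Hirzebruch--Riemann--Roch and Hodge index; and (ii) after the reduction to $\overline{\beta}$-stable objects with $\overline{\beta}\in[0,1)$, $\ch_0\geq 0$ (Lemma \ref{lem:reduction_alpha_0}, which needs $\Gamma\cdot H\geq 0$ and derived duality), a bound $\chi(\OO(H),E)\leq 0$ coming from stability plus Serre duality, combined with (i) to control $\ch_3^{\overline\beta}$.

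Beyond the wrong citation strategy, your identification of $\Gamma$ is also not workable as sketched. In the paper $\Gamma$ is constructed case by case: $\Gamma=0$ for $\P^1\times\P^1\times\P^1$ and $\P(T_{\P^2})$; $\Gamma=\tfrac{h^2+e^2}{6}+C_0H^2$ for the blow-up of $\P^3$ in a point; $\Gamma=C_0H^2-\td_2(X)$ in index one, with $C_0$ chosen so that explicit auxiliary polynomials ($f_{X,C_0}$, $g_X$, $l_{X,C_0}$, etc.) are non-negative on the relevant $\overline\beta$-intervals. So $\Gamma\cdot H\geq 0$ is arranged by the choice of $C_0$, not deduced from Miyaoka-type positivity of $c_2\cdot H$; and the claim that the leftover after substituting $\nu_{\alpha,\beta}(E)=0$ is automatically linear in $\ch_1^\beta$ against a fixed cycle is exactly what must be engineered, since in higher Picard rank $\ch_1^\beta(E)$ has components invisible to $H^2\cdot\ch_1^\beta(E)$. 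Two further technical points you would need: tensoring by an arbitrary line bundle $\OO(D)$ does not merely shift $\beta$ unless $D$ is proportional to $H$, so your twisting step only legitimately uses $\OO(aH)$; and the boundary case $\overline{\beta}(E)=0$ in index one requires a separate argument (the paper's Lemma \ref{lem:ext1_vanishing}, choosing $E$ with minimal $H^2\cdot\ch_1$ and maximal $\ch_0$ to get $\Ext^1(E,\OO_X)=0$), which your outline does not address.
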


In \cite{BMT14:stability_threefolds} (see also \cite{BMS14:abelian_threefolds}) a version of Theorem \ref{thm:main} was conjectured without the class $\Gamma$. That original conjecture turned out to be imprecise in the case of the blow-up of $\P^3$ at a point \cite{Sch16:counterexample}.
Therefore, modifying the conjecture in \cite{BMT14:stability_threefolds} by adding $\Gamma$ seems like a necessary step to understand Bridgeland stability on threefolds.
In fact, Theorem \ref{thm:main} does still imply existence of Bridgeland stability conditions \cite{Bri07:stability_conditions} on all Fano threefolds exactly as in \cite{BMT14:stability_threefolds}. Indeed, similarly to what was done in \cite{BMS14:abelian_threefolds}, we get the following quadratic inequality (see Proposition \ref{prop:support_property}):
\begin{align*}
Q_{\alpha, \beta}(E) = &\alpha^2\, \left(\overline{\Delta}_H(E) + 3\, \frac{\Gamma\cdot H}{H^3} (H^3\cdot \ch_0^\beta(E))^2\right)\\
& + 2\, (H \cdot \ch_2^{\beta}(E)) (2\, H\cdot \ch_2^\beta(E) - 3\, \Gamma\cdot H \cdot \ch_0^\beta(E)) \\
& - 6 (H^2 \cdot \ch_1^{\beta}(E)) (\ch_3^{\beta}(E) - \Gamma \cdot \ch_1^{\beta}(E)) \geq 0,
\end{align*}
for any $\nu_{\alpha,\beta}$-stable object $E$, where
\[
\overline{\Delta}_H(E) = (H^2 \cdot \ch_1^\beta(E))^2 - 2(H^3 \cdot \ch_0^\beta(E))(H \cdot \ch_2^\beta(E)).
\]

When the Picard rank of $X$ is $1$, Theorem \ref{thm:main} was proved in \cite{Li15:conjecture_fano_threefold} with $\Gamma = 0$ (the cases of $\P^3$ and of the quadric threefold were proved earlier, with a completely different proof, in \cite{Mac14:conjecture_p3} and \cite{Sch14:conjecture_quadric}). The idea of the proof of Theorem \ref{thm:main} follows along the same lines as in \cite{Li15:conjecture_fano_threefold}, by using an Euler characteristic estimate. This will be carried out in Section \ref{sec:main_standard}.
The key result is a slight generalization of \cite[Proposition 3.2]{Li15:conjecture_fano_threefold} to the higher Picard rank case.
This is the content of Theorem \ref{thm:bogomolov_extension}, whose proof will take the whole Section \ref{sec:Bog}.

There are examples in which the class $\Gamma$ is indeed $0$, even if the Picard number is greater than $1$ (see Theorem \ref{thm:toric}). More generally, it would be interesting to know what the optimal class $\Gamma$ is.
For example, all known counter-examples (\cite{Sch16:counterexample, Mar16:counterexample}) to the original conjecture from \cite{BMT14:stability_threefolds} on Fano threefolds do satisfy a stronger inequality for some choice of $\Gamma$ with $\Gamma \cdot H = 0$ (see also Section \ref{sec:examples2}).
It is interesting to observe that when such a strong version of the conjecture holds, then all possible applications of such an inequality
either to the birational geometry of threefolds or to counting invariants (pointed out in a sequence of papers; e.g., \cite{BBMT14:fujita, Tod12:stability_curve_counting, Tod14:icm}) would hold unchanged.
Finally, we remark that the original conjecture is also proved for abelian threefolds (and their \'etale quotients) in \cite{MP15:conjecture_abelian_threefoldsI,MP16:conjecture_abelian_threefoldsII,BMS14:abelian_threefolds}.

In the case of $\P^1 \times \P^2$ (and partly in the case of the blow-up of $\P^3$ in a line) we are able to prove the original conjecture with $\Gamma =0$ by using the toric Frobenius morphism. The proof in these two cases will then follow a similar ``limit'' argument of Euler characteristics as in \cite{BMS14:abelian_threefolds}.

\begin{thm}\label{thm:toric}
Let $X$ be a $\P^2$-bundle over $\P^1$.
Let $H$ be an ample divisor such that, for all effective divisors $D$ on $X$, we have $H \cdot D^2\geq 0$.
Then, for any $\nu_{\alpha,\beta}$-stable object $E\in\Coh^\beta(X)$ with $\nu_{\alpha,\beta}(E)=0$, we have
\[
\ch_3^\beta(E) - \frac{\alpha^2}{6} H^2\cdot\ch_1^\beta(E) \leq 0.
\]
\end{thm}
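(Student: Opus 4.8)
The plan is to run a Frobenius-refined Euler characteristic estimate, modeled on the argument for abelian threefolds in \cite{BMS14:abelian_threefolds} with the multiplication-by-$m$ isogeny replaced by the $m$-th toric Frobenius; this must be a separate argument from Theorem \ref{thm:main}, since here $X$ need not be Fano and $H$ need not be anticanonical. First I would record the toric picture: as every vector bundle on $\P^1$ splits, $X\cong\P(\OO_{\P^1}(a_1)\oplus\OO_{\P^1}(a_2)\oplus\OO_{\P^1}(a_3))$ is a smooth projective toric threefold, so it carries the toric Frobenius morphisms $F_m\colon X\to X$. These are finite flat of degree $m^3$, satisfy $F_m^*\OO_X(D)=\OO_X(mD)$ for $D\in\Pic(X)$, and — the Chow ring of a smooth toric variety being generated in degree one — act by multiplication by $m^k$ on $A^k(X)$ (and by $m^{3-k}$ under pushforward). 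Two consequences matter: (i) $F_m^*$ multiplies the slope $\mu_H$ by $m$ and satisfies $\ch_k^{m\beta}(F_m^*G)=m^k\,\ch_k^\beta(G)$, hence $\nu_{m\alpha,m\beta}(F_m^*G)=m\,\nu_{\alpha,\beta}(G)$ for every object $G$; and (ii) by Thomsen's theorem the pushforward of a line bundle along $F_m$ is a direct sum of line bundles $\bigoplus_j\OO_X(D_j^{(m)})$, whose classes are explicit from the fan and, rescaled by $1/m$, equidistribute over a fixed polytope in $\NS(X)_\R$.

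I would then fix a $\nu_{\alpha,\beta}$-stable object $E$ with $\nu_{\alpha,\beta}(E)=0$ and, using that tilt-stability is an open condition while the target inequality is closed, reduce to $\beta\in\Q$. The core of the proof is a family of bounds $\chi(\OO_X(D),E)\le 0$ for line bundles $\OO_X(D)$ with $D/m$ ranging over a region $R\subset\NS(X)_\R$ bounded by the tilt wall $\{\nu_{\alpha,\beta}(\OO_X(D/m))=\nu_{\alpha,\beta}(E)=0\}$. Each such bound follows from the vanishing $\hom^k(\OO_X(D),E)=0$ for $k\ne 1$: the case $k=0$ from tilt-stability of $E$ against the tilt-semistable line bundle $\OO_X(D)$, which has strictly larger tilt-slope; the case $k=3$ from Serre duality and tilt-stability again; and the case $k=2$ from Serre duality together with the Bogomolov-type inequality for tilt-semistable objects, invoking Theorem \ref{thm:bogomolov_extension} to rule out the middle $\Ext$. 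The hypothesis $H\cdot D^2\ge 0$ for every effective divisor $D$ is exactly what makes these discriminant-type estimates hold with no correction term; it is the substitute, in this setting, for the class $\Gamma$ of Theorem \ref{thm:main}.

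To finish I would assemble the limit. Summing $\chi(\OO_X(D_j^{(m)}),E)\le 0$ over the summands of $F_{m*}$ of a suitable line bundle — equivalently, integrating the Euler characteristic bounds against the equidistributing Frobenius measure — and expanding via Grothendieck--Riemann--Roch for the finite flat map $F_m$ and Hirzebruch--Riemann--Roch produces a polynomial inequality in $m$. Dividing by $m^3$ and letting $m\to\infty$, the equidistribution turns the sum into an integral over the limiting polytope and annihilates every term of order below $m^3$ — in particular the Todd contributions of $X$ and the analogue of the $\Gamma\cdot\ch_1^\beta(E)$ term — while the surviving leading term, after substituting $\frac{\alpha^2}{2}=(H\cdot\ch_2^\beta(E))/(H^3\cdot\ch_0^\beta(E))$ from $\nu_{\alpha,\beta}(E)=0$ (and handling $\ch_0^\beta(E)=0$ directly), is exactly $\ch_3^\beta(E)-\frac{\alpha^2}{6}H^2\cdot\ch_1^\beta(E)$; hence it is $\le 0$.

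The main obstacle is the uniform-in-$m$ cohomology vanishing of the second step, over a range of $D$ broad enough — after rescaling — to reach the sharp wall. Unlike an \'etale isogeny, the toric Frobenius does not in general preserve tilt-(semi)stability: this is consistent with the known counterexamples to the $\Gamma=0$ conjecture on toric Fano threefolds, such as the blow-up of $\P^3$ at a point. So there is no black-box preservation statement to quote, and the $\P^2$-bundle structure over $\P^1$ together with the positivity $H\cdot D^2\ge 0$ must be used essentially, both to control the Harder--Narasimhan behaviour of $F_m^*E$ (equivalently, the tilt-stability of the test line bundles) and to ensure that the error terms in the limit stay of lower order rather than producing a $\Gamma$. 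The ramification of $F_m$ along the toric boundary and the degenerate case $\ch_0^\beta(E)=0$ require separate but routine handling.
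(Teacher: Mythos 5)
Your overall strategy---toric Frobenius plus Thomsen's decomposition (Theorem \ref{thm:ToricDecomposition}), adjunction to replace pull-back of $E$ by push-forward line bundles as test objects, Euler-characteristic estimates killed by stability, and a limit in $m$---is the same as the paper's proof of Theorem \ref{thm:toric2}. But two of your key steps would fail as stated. First, you propose to kill $\ext^2$ by Serre duality together with Theorem \ref{thm:bogomolov_extension}. That theorem is proved only for Fano threefolds polarized by $H=-K_X/i_X$ (its proof computes $\chi(E,E)$ using $\omega_X=\OO(-i_XH)$), so it is not available here; as you yourself stress at the outset, $X$ need not be Fano and $H$ is arbitrary subject to the positivity hypothesis. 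The correct mechanism is purely toric and much simpler: since $\omega_X=\OO_X\bigl(-\sum_\rho D_\rho\bigr)$, each Thomsen summand $L_j=\OO_X\bigl(\frac{1}{m}\sum_\rho a_\rho D_\rho\bigr)$ with $0\le a_\rho<m$ gives $L_j\otimes\omega_X=\OO_X\bigl(-\frac{1}{m}\sum_\rho (m-a_\rho)D_\rho\bigr)$, which satisfies $H^2\cdot(L_j\otimes\omega_X)<0$ and $H\cdot(L_j\otimes\omega_X)^2>0$; hence $(L_j\otimes\omega_X)[1]$ lies in the tilted heart with limiting tilt-slope $<0$, and $\ext^2(L_j,E)=\hom(E,L_j\otimes\omega_X[1])=0$ by stability of $E$ and of line bundles, the latter holding exactly because of the hypothesis $H\cdot D^2\ge0$ via \cite[Corollary 3.11]{BMS14:abelian_threefolds}.

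Second, your assembly of the limit is asserted rather than proved, and it hides the actual counting issue. The paper first reduces via Lemma \ref{lem:reduction_alpha_0} (with $\Gamma=0$) to showing $\ch_3^{\overline{\beta}}(E)\le 0$ for $\overline{\beta}$-stable $E$; then $\chi(\OO_X,\underline{m}^*(\,\cdot\,))$ grows like $m^3\ch_3^{\overline{\beta}}(E)$, while adjunction bounds it by $\hom+\ext^2$ against the $L_j$. The crux is that $\hom(L_j,E)$ can be nonzero only for summands with $H\cdot\ch_1^{\overline{\beta}}(L_j)^2=0$, and one must prove that the total multiplicity of those summands is $O(m^2)$; for a $\P^2$-bundle over $\P^1$ this holds because a divisor class with $H\cdot D^2=0$ has at most three torus-invariant representatives, so at most three $a_\rho$ are nonzero and they satisfy a linear relation. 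Your appeal to ``equidistribution annihilates terms of lower order'' assumes precisely what must be shown, namely that these problematic Hom terms are of lower order. Relatedly, working at $\alpha>0$ after a reduction to $\beta\in\Q$ and extracting the $\frac{\alpha^2}{6}H^2\cdot\ch_1^\beta(E)$ term from a GRR/equidistribution computation is unjustified and makes the slope comparisons $\nu(L_j)>0=\nu(E)$ strictly harder; the paper instead reduces to $\alpha\to 0$, where $\overline{\beta}(E)$ is an invariant of $E$ that cannot be perturbed, which is exactly why the irrational case requires the Dirichlet-approximation argument. Finally, no control of the Harder--Narasimhan behaviour of $\underline{m}^*E$ is needed at all: adjunction is used precisely so that only stability of the fixed $E$ and of line bundles enters.
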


The assumption on the polarization $H$ is quite strong.
It holds for all polarizations on $\P^1\times\P^2$.
On the blow-up of $\P^3$ in a line, it holds for some polarizations, but not for the anti-canonical bundle.
In fact, we will prove Theorem \ref{thm:toric} under a slightly more general technical assumption. The precise statement is Theorem \ref{thm:toric2}.
In particular, it will hold for all polarizations $H$ on $\P^1\times\P^1\times\P^1$ as well.

\subsection*{Acknowledgments}
We thank Arend Bayer and Paolo Stellari very much for allowing us to include the results in Section \ref{sec:examples1}, which arose from joint discussions with them, and for many discussions and useful advice.
We also thank Cristian Martinez for sending us a copy of his preprint \cite{Mar16:counterexample}, and Zheng Hua, Yukinobu Toda, and Hokuto Uehara for useful discussions. Finally, we would like to thank the referee for carefully reading this paper and bringing to our attention an error in the original
version. The authors would like to acknowledge Northeastern University and the Institut de Math\'ematiques de Toulouse for the hospitality during the writing of this paper.
The second author was partially supported by NSF grant DMS-1523496 and by ANR-11-LABX-0040-CIMI within the program ANR-11-IDEX-0002-02;
and the third author by a Presidential Fellowship of the Ohio State University.

\subsection*{Notation}
\begin{center}
  \begin{tabular}{ r l }
    $X$ & smooth projective threefold over $\C$ \\
    $H$ & fixed ample divisor on $X$ \\
    $\Db(X)$ & bounded derived category of coherent \\ & sheaves on $X$ \\
    $\ch(E)$ & Chern character of an object $E \in \Db(X)$  \\
    $\ch_{\leq l}(E)$ & $(\ch_0(E), \ldots, \ch_l(E))$ \\
    $H \cdot \ch(E)$ & $(H^3 \cdot \ch_0(E), H^2 \cdot \ch_1(E), H \cdot \ch_2(E)
    \ch_3(E))$ \\
    $H \cdot \ch_{\leq l}(E)$ & $(H^3 \cdot
    \ch_0(E), \ldots, H^{3-l} \cdot \ch_l(E))$ \\
    $\ch^{\beta}(E)$ & $e^{-\beta H} \cdot \ch(E)$ \\
    $\ext^i(E,F)$ & $\dim \Ext^i(E,F)$ for $E, F \in \Db(X)$ and $i \in Z$ \\
    $\hom(E,F)$ & $\dim \Hom(E,F)$ for $E, F \in \Db(X)$
  \end{tabular}
\end{center}

\section{Background on tilt stability}
\label{sec:tilt}

In \cite{BMT14:stability_threefolds} the notion of tilt stability was introduced as an auxiliary notion in between slope stability and Bridgeland stability on threefolds. Let $X$ be a smooth projective threefold over the complex numbers and $H$ be a fixed ample divisor on $X$. The classical slope for a coherent sheaf $E \in \Coh(X)$ is defined as
\[\mu(E) := \frac{H^2\cdot \ch_1(E)}{H^3 \cdot \ch_0(E)},\]
where division by zero is interpreted as $+\infty$. As usual a coherent sheaf $E$ is called \emph{slope-(semi)stable} if for any non trivial proper subsheaf $F \subset E$ the inequality $\mu(F) < (\leq) \mu(E/F)$ holds.

Let $\beta$ be an arbitrary real number. Then the twisted Chern character $\ch^{\beta}$ is defined to be $e^{-\beta H} \cdot \ch$. Explicitly:
\begin{align*}
\ch^{\beta}_0 &= \ch_0, \\
\ch^{\beta}_1 &= \ch_1 - \beta H \cdot \ch_0 ,\\
\ch^{\beta}_2 &= \ch_2 - \beta H \cdot \ch_1 + \frac{\beta^2}{2} H^2 \cdot \ch_0, \\
\ch^{\beta}_3 &= \ch_3 - \beta H \cdot \ch_2 + \frac{\beta^2}{2} H^2 \cdot \ch_1 -
\frac{\beta^3}{6} H^3 \cdot \ch_0.
\end{align*}

The process of tilting is used to construct a new heart of a bounded t-structure. For more information on the general theory of tilting we refer to \cite{HRS96:tilting} and \cite{BvdB03:functors}. A torsion pair is defined by
\begin{align*}
\TT_{\beta} &:= \{E \in \Coh(X) : \text{any quotient $E \onto G$ satisfies $\mu(G) > \beta$} \}, \\
\FF_{\beta} &:=  \{E \in \Coh(X) : \text{any subsheaf $F \subset E$ satisfies $\mu(F) \leq \beta$} \}.
\end{align*}
The heart of a bounded t-structure is given as the extension closure $\Coh^{\beta}(X) := \langle \FF_{\beta}[1],\TT_{\beta} \rangle$. Let $\alpha > 0$ be a positive real number. The tilt slope is defined as
\[\nu_{\alpha, \beta} := \frac{H \cdot \ch^{\beta}_2 - \frac{\alpha^2}{2} H^3
\cdot \ch^{\beta}_0}{H^2 \cdot \ch^{\beta}_1}.\]
As before, an object $E \in \Coh^{\beta}(X)$ is called \emph{tilt-(semi)stable} (or \emph{$\nu_{\alpha,\beta}$-(semi)stable}) if for any non trivial proper subobject $F \subset E$ the inequality $\nu_{\alpha, \beta}(F) < (\leq) \nu_{\alpha, \beta}(E/F)$ holds.

\begin{thm}[{Bogomolov Inequality for Tilt Stability, \cite[Corollary 7.3.2]{BMT14:stability_threefolds}}]
\label{thm:bogomolov}
\hfill Any \linebreak $\nu_{\alpha, \beta}$-semistable object $E \in \Coh^{\beta}(X)$ satisfies
\begin{align*}
\overline{\Delta}_H(E) &= (H^2 \cdot \ch_1^{\beta}(E))^2 - 2(H^3 \cdot \ch_0^{\beta}(E))(H \cdot \ch_2^{\beta}(E)) \\
&= (H^2 \cdot \ch_1(E))^2 - 2(H^3 \cdot \ch_0(E))(H \cdot \ch_2(E)) \geq 0.
\end{align*}
\end{thm}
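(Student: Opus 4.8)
The plan is to reduce the statement to the \emph{classical} Bogomolov inequality for slope-semistable sheaves on surfaces, and then to propagate it across the $(\alpha,\beta)$-plane by a wall-crossing induction. I would begin with the elementary reductions. The number $\overline{\Delta}_H(E)$ depends only on the triple $(H^3\cdot\ch_0(E),\,H^2\cdot\ch_1(E),\,H\cdot\ch_2(E))$, and the second displayed equality in the statement already records that it is unchanged under $\ch\mapsto\ch^\beta$; the same short computation shows it is invariant under $E\mapsto E\otimes L$ for a line bundle $L$ and under $E\mapsto E[1]$. From the structure of the torsion pair $(\TT_\beta,\FF_\beta)$ one gets $H^2\cdot\ch_1^\beta(E)\geq 0$ for all nonzero $E\in\Coh^{\beta}(X)$, and if $\ch_0(E)=0$ then $\overline{\Delta}_H(E)=(H^2\cdot\ch_1^\beta(E))^2\geq 0$; so, twisting by a line bundle and, for $\ch_0(E)<0$, applying the derived dual (which preserves $\overline{\Delta}_H$ and sends $\nu_{\alpha,\beta}$-semistable objects to $\nu_{\alpha,-\beta}$-semistable ones), we may assume $\ch_0(E)>0$.

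There are two main inputs. First, a $\mu_H$-semistable torsion-free sheaf $F$ on the threefold $X$ satisfies $\overline{\Delta}_H(F)\geq 0$: restricting $F$ to a general smooth surface $Y\in|mH|$ with $m\gg 0$ preserves $\mu$-semistability by the Mehta--Ramanathan restriction theorem and multiplies $\overline{\Delta}_H(F)$ by $m^2$, so the claim follows from the classical Bogomolov inequality on $Y$ together with the Hodge index theorem. Second, the large-volume limit: for fixed $\beta$ and $\alpha$ sufficiently large relative to the numerical class, a $\nu_{\alpha,\beta}$-semistable object $E$ with $\ch_0(E)>0$ is a $\mu_H$-semistable sheaf lying in $\TT_\beta$. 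Indeed, the subobject $H^{-1}(E)[1]$ of $E$, if nonzero, has $\nu_{\alpha,\beta}(H^{-1}(E)[1])\to+\infty$ while $\nu_{\alpha,\beta}(E)\to-\infty$, forcing $E$ to be a sheaf in $\TT_\beta$; and since $\nu_{\alpha,\beta}$ is, to leading order in $\alpha$, an increasing function of $\mu_H$ on positive-rank objects, the $\mu_H$-Harder--Narasimhan filtration of $E$ would tilt-destabilize $E$ unless $E$ is already $\mu_H$-semistable. Combining the two inputs gives the inequality for every $\nu_{\alpha,\beta}$-semistable object with $\alpha$ large.

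To descend to arbitrary $(\alpha,\beta)$ one uses the wall-and-chamber structure of tilt stability. For a fixed class $v=\ch_{\leq 2}(E)$, the loci where a $\nu_{\alpha,\beta}$-semistable object of class $v$ can be strictly semistable with a subobject of a fixed class are curves (numerical walls), semistability is locally constant on their complement, and there are no walls for $\alpha$ large. Starting from a point where $E$ is semistable and increasing $\alpha$, either $E$ stays semistable --- and we conclude by the previous paragraph --- or it meets a last wall, along which its Jordan--H\"older factors $E_1,\dots,E_n$ are $\nu_{\alpha,\beta}$-semistable and, by the inductive hypothesis, satisfy $\overline{\Delta}_H(E_i)\geq 0$. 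Because the $E_i$ lie on a common wall, have $H^2\cdot\ch_1^\beta(E_i)>0$, and have non-negative discriminant, they lie in a single component of the positive cone of the Lorentzian form $\overline{\Delta}_H$, and the resulting reverse-triangle (superadditivity) inequality $\overline{\Delta}_H(E)\geq\sum_i\overline{\Delta}_H(E_i)\geq 0$ finishes the argument.

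The step I expect to be the main obstacle is making this last part rigorous: local finiteness of the walls for a class $v$ is itself proved by bounding the discriminants of the potential destabilizing subobjects via exactly the Bogomolov inequality one is trying to establish. One therefore has to organize the large-volume analysis, the wall structure, and the inequality as a \emph{single} induction --- on $\overline{\Delta}_H(v)$, which takes values in a discrete set, possibly together with the rank --- so that the inequality for all ``smaller'' classes is available precisely when it is needed to control the walls for $v$. This bookkeeping, carried out in \cite[\S7]{BMT14:stability_threefolds}, is where the real content lies; once it, the large-volume limit, and the classical surface inequality are granted, the rest is formal.
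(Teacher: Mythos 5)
The paper gives no argument for this statement---it is quoted directly from \cite[Corollary 7.3.2]{BMT14:stability_threefolds}---and your sketch reproduces essentially the strategy of that cited proof: reduction to positive rank, the large-volume limit identifying tilt-semistable objects with slope-semistable sheaves, the classical Bogomolov inequality on threefolds via Mehta--Ramanathan restriction plus the Hodge index theorem, and a wall-crossing induction using superadditivity of $\overline{\Delta}_H$ along walls, with the genuine subtlety (organizing local finiteness of walls and the inequality into one induction) correctly identified and delegated to the same reference the paper itself cites. The only slip is immaterial: $\overline{\Delta}_H$ is invariant under twisting by $\OO(mH)$ (line bundles whose class is proportional to $H$), not by arbitrary line bundles, but that is all your reduction actually uses.
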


Let $\Lambda = \Z \oplus \Z \oplus \tfrac{1}{2} \Z$ be the image of the map $H \cdot \ch_{\leq 2}$. Notice that $\nu_{\alpha, \beta}$ factors through $H \cdot \ch_{\leq 2}$. Varying $(\alpha, \beta)$ changes the set of stable objects. A \textit{numerical wall} in tilt stability with respect to a class $v \in \Lambda$ is a non trivial proper subset $W$ of the upper half plane given by an equation of the form $\nu_{\alpha, \beta}(v) = \nu_{\alpha, \beta}(w)$ for another class $w \in \Lambda$. A subset $S$ of a numerical wall $W$ is called an \textit{actual wall} if the set of semistable objects with class $v$ changes at $S$. The structure of walls in tilt stability is rather simple. Part (1) - (5) is usually called Bertram's Nested Wall Theorem and appeared first in \cite{Mac14:nested_wall_theorem}, while part (6), (7), and (8) are in Lemma 2.7 and Appendix A of \cite{BMS14:abelian_threefolds}. The last part of (8) about reflexivity is to be found in \cite[Proposition 3.1]{LM16:examples_tilt}.

\begin{thm}[Structure Theorem for Tilt Stability]\label{thm:Bertram}
Let $v \in \Lambda$ be a fixed class. All numerical walls in the following statements are with respect to $v$.
\begin{enumerate}
  \item Numerical walls in tilt stability are either semicircles with center on the $\beta$-axis or rays parallel to the $\alpha$-axis.
  \item If two numerical walls given by classes $w,u \in \Lambda$ intersect, then $v$, $w$ and $u$ are linearly dependent. In particular, the two walls are completely identical.
  \item The curve $\nu_{\alpha, \beta}(v) = 0$ is given by a hyperbola. Moreover, this hyperbola intersects all semicircular walls at their top point.
  \item If $v_0 \neq 0$, there is exactly one numerical vertical wall given by $\beta = v_1/v_0$. If $v_0 = 0$, there is no actual vertical wall.
  \item If a numerical wall has a single point at which it is an actual wall,
  then all of it is an actual wall.
  \item If there is an actual wall numerically defined by an exact sequence of tilt semistable objects $0 \to F \to E \to G \to 0$ such that $H \cdot \ch_{\leq 2}(E) = v$, then
  \[\overline{\Delta}_H(F) + \overline{\Delta}_H(G) \leq \overline{\Delta}_H(E).\]
  Moreover, equality holds if and only if either $H \cdot \ch_{\leq 2}(F) = 0$, $H \cdot \ch_{\leq 2}(G) = 0$, or both $\overline{\Delta}_H(E) = 0$ and $H \cdot \ch_{\leq 2}(F)$, $H \cdot \ch_{\leq 2}(G)$, and $H \cdot \ch_{\leq 2}(E)$ are all proportional.
  \item If $\overline{\Delta}_H(E) = 0$ for a tilt semistable object $E$, then $E$ can only be destabilized at the unique numerical vertical wall.
  \item If $E$ is a tilt stable object for fixed $\beta \in \R$ and all $\alpha \gg 0$, then $E$ is one of the following.
  \begin{itemize}
  \item If $\ch_0(E) \geq 0$, then $E$ is a slope semistable sheaf.
  \item If $\ch_0(E) < 0$, then $H^0(E)$ is a torsion sheaf supported in dimension smaller than or equal to $1$ and $H^{-1}(E)$ is a reflexive slope semistable sheaf with positive rank.
  \end{itemize}
\end{enumerate}
\end{thm}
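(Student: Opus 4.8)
The plan is to split the eight assertions into two groups. Parts (1)--(4) come down to elementary conics and linear algebra on the three-dimensional real vector space $\Lambda\otimes\R$, while parts (5)--(8) rest on the Bogomolov inequality of Theorem~\ref{thm:bogomolov}, a Hodge-index-type sign computation, and a large-volume degeneration. For the first group I would encode the tilt slope in the group homomorphism $Z_{\alpha,\beta}\colon\Lambda\otimes\R\to\C$ defined by $Z_{\alpha,\beta}(v)=-H\cdot\ch_2^\beta(v)+\frac{\alpha^2}{2}H^3\cdot\ch_0^\beta(v)+\sqrt{-1}\,H^2\cdot\ch_1^\beta(v)$, so that $\nu_{\alpha,\beta}=-\Real(Z_{\alpha,\beta})/\Imm(Z_{\alpha,\beta})$ and $\nu_{\alpha,\beta}(v)=\nu_{\alpha,\beta}(w)$ precisely when $Z_{\alpha,\beta}(v)$ and $Z_{\alpha,\beta}(w)$ lie on a common real line through the origin, i.e.\ $\Imm(\overline{Z_{\alpha,\beta}(v)}\,Z_{\alpha,\beta}(w))=0$. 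Expanding the twisted Chern characters, this equation takes the shape $\frac12\rho_{v,w}(\alpha^2+\beta^2)+\beta A_{v,w}+B_{v,w}=0$, where $\rho_{v,w}=(H^3\ch_0(v))(H^2\ch_1(w))-(H^3\ch_0(w))(H^2\ch_1(v))$ and $A_{v,w},B_{v,w}$ are independent of $\alpha$ and $\beta$; the whole of (1) is the observation that the coefficients of $\alpha^2$ and of $\beta^2$ agree --- a semicircle centred on the $\beta$-axis when $\rho_{v,w}\neq0$, a vertical line when $\rho_{v,w}=0$. Part (2) is a dimension count: if the walls of $w$ and of $u$ both contain a point $(\alpha_0,\beta_0)$ with $\alpha_0>0$, then $Z_{\alpha_0,\beta_0}$ maps $v,w,u$ onto one real line, and since $Z_{\alpha_0,\beta_0}$ is surjective with one-dimensional kernel, that line pulls back to a plane, forcing $v,w,u$ to be linearly dependent --- and dependent classes have literally the same wall. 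Part (3) is the remark that $\nu_{\alpha,\beta}(v)=0$ is the conic $\frac12(H^3\ch_0(v))(\beta^2-\alpha^2)-\beta(H^2\ch_1(v))+H\ch_2(v)=0$, a hyperbola when $\ch_0(v)\neq0$, together with the short substitution showing that the centre $\beta=-A_{v,w}/\rho_{v,w}$ of a semicircular wall lies on it, so the wall meets the hyperbola at its apex; and (4) identifies the numerical vertical wall with the locus $H^2\cdot\ch_1^\beta(v)=0$, i.e.\ $\beta=v_1/v_0$, observing that for $v_0=0$ one has $\Imm Z_{\alpha,\beta}(v)\equiv v_1$, a nonzero constant, leaving no room for an actual vertical wall.

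For (5) I would invoke (2): if a genuine destabilizing sequence $0\to F\to E\to G\to0$ of $\nu_{\alpha_0,\beta_0}$-semistable objects realises the wall $W$ of $v=H\cdot\ch_{\leq2}(E)$ at one point, the class of $F$ defines a numerical wall through that point, which by (2) must be $W$ itself; openness of (semi)stability together with the fact that no further wall of $v$ can be crossed along $W$ (again (2)) then forces $F\hookrightarrow E$ to persist inside $\Coh^\beta(X)$ along all of $W$ and to keep contradicting stability --- this is Bertram's nested-wall argument. For (6) I would regard $\overline{\Delta}_H$ as a quadratic form on $\Lambda\otimes\R$ with associated bilinear form $\langle\,\cdot\,,\cdot\,\rangle$, so that $\overline{\Delta}_H(E)=\overline{\Delta}_H(F)+\overline{\Delta}_H(G)+2\langle F,G\rangle$ and it suffices to prove $\langle F,G\rangle\geq0$. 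The key is a signature computation: the kernel of $Z_{\alpha_0,\beta_0}$ is spanned by the class with $(H^3\ch_0,H^2\ch_1,H\ch_2)=(1,\beta_0,\frac{\alpha_0^2+\beta_0^2}{2})$, on which $\overline{\Delta}_H$ equals $-\alpha_0^2<0$; hence $\overline{\Delta}_H$ restricted to the plane $P$ of classes of equal tilt slope at the wall has a negative direction, and the positivity locus $\{\overline{\Delta}_H\geq0\}\cap P$ splits into (at most) two convex cones. Since $F,G\in P$ both satisfy $\overline{\Delta}_H\geq0$ (Theorem~\ref{thm:bogomolov}) and have $H^2\cdot\ch_1^\beta\geq0$, they lie in the same one of these cones, on which $\langle\,\cdot\,,\cdot\,\rangle\geq0$ by the standard Minkowski inequality; tracking when $\langle F,G\rangle=0$ yields exactly the stated equality cases.

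Part (7) then follows instantly: a semicircular wall for an $E$ with $\overline{\Delta}_H(E)=0$ would, by (6) and Theorem~\ref{thm:bogomolov}, force $\overline{\Delta}_H(F)=\overline{\Delta}_H(G)=0$ and $F,G,E$ numerically proportional, i.e.\ a numerically trivial, non-actual wall. For (8), letting $\alpha\to\infty$ with $\beta$ fixed, $\nu_{\alpha,\beta}$ is dominated by $-\frac{\alpha^2}{2}\cdot(H^3\cdot\ch_0^\beta)/(H^2\cdot\ch_1^\beta)$, so $\nu_{\alpha,\beta}$-stability for $\alpha\gg0$ forces, lexicographically, control first of the slope $\mu$ and then of $H\cdot\ch_2^\beta$; feeding the canonical sequence $0\to H^{-1}(E)[1]\to E\to H^0(E)\to0$ in $\Coh^\beta(X)$ and the Harder--Narasimhan filtrations of $H^{-1}(E)$ and $H^0(E)$ into this ordering, one argues as in the surface case that if $\ch_0(E)\geq0$ then $H^{-1}(E)=0$ and $E$ is a slope-semistable sheaf, while if $\ch_0(E)<0$ then $H^0(E)$ is torsion of dimension $\leq1$ and $H^{-1}(E)$ is slope-semistable of positive rank. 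The reflexivity refinement then follows by replacing $H^{-1}(E)$ with its reflexive hull: in the relevant case the resulting torsion cokernel $T$, supported in dimension $\leq1$, embeds into $E$ inside $\Coh^\beta(X)$ with $\nu_{\alpha,\beta}(T)=+\infty>\nu_{\alpha,\beta}(E)$, so stability of $E$ forces $T=0$.

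I expect two genuine obstacles. First, making the wall-persistence step (5) precise requires the usual careful bookkeeping with subobjects and Harder--Narasimhan filtrations across a family of tilted hearts --- that is, an honest proof of Bertram's nested-wall theorem, not a one-line matter. Second, part (8) demands a genuine analysis of the large-volume limit rather than a single estimate, and the reflexivity clause in particular goes through the comparison-with-reflexive-hull argument of \cite{LM16:examples_tilt}. The sign-and-equality bookkeeping in (6) is the other delicate point, since its equality cases are exactly what feed into (7) and into the wall-crossing inequalities used throughout the rest of the paper.
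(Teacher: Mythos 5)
Your outline is essentially correct, but note that the paper does not prove Theorem \ref{thm:Bertram} at all: it is stated as background, with parts (1)--(5) cited from \cite{Mac14:nested_wall_theorem}, parts (6)--(8) from Lemma 2.7 and Appendix A of \cite{BMS14:abelian_threefolds}, and the reflexivity refinement in (8) from \cite[Proposition 3.1]{LM16:examples_tilt}. Your sketch reconstructs exactly those standard arguments --- the conic and linear-algebra analysis of $Z_{\alpha,\beta}$ for (1)--(5), the signature-$(1,1)$ quadratic-form argument on the plane of equal tilt slope for (6)--(7), and the large-volume limit plus reflexive-hull argument for (8) --- so it matches the cited proofs in approach (only the wording in (3) is off: it is the top point of the semicircular wall lying above its centre, not the centre itself, that lies on the hyperbola $\nu_{\alpha,\beta}(v)=0$).
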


A generalized Bogomolov type inequality involving third Chern characters for tilt semistable objects with $\nu_{\alpha, \beta} = 0$ has been conjectured in \cite{BMT14:stability_threefolds}. Its main goal was the construction of Bridgeland stability conditions on arbitrary threefolds.

\begin{conj}[{\cite[Conjecture 1.3.1]{BMT14:stability_threefolds}}]
\label{conj:bmt}
For any $\nu_{\alpha,\beta}$-stable object $E \in \Coh^{\beta}(X)$ with $\nu_{\alpha,\beta}(E)=0$ the inequality
\[
\ch^{\beta}_3(E) \leq \frac{\alpha^2}{6} H^2 \cdot \ch^{\beta}_1(E)
\]
holds.
\end{conj}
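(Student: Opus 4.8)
The plan is to follow the strategy of Li \cite{Li15:conjecture_fano_threefold}, with one caveat: the inequality cannot hold as stated for every smooth projective threefold, since it fails already on the blow-up of $\P^3$ at a point \cite{Sch16:counterexample}. So the honest target is the modified inequality of Theorem \ref{thm:main}, from which Conjecture \ref{conj:bmt} follows precisely when the correction cycle $\Gamma$ may be chosen to be $0$ — this holds in Picard rank one (Li), and will be established for $\P^2$-bundles over $\P^1$ with a suitable polarization in Theorem \ref{thm:toric}. In every case the mechanism is the same: for a $\nu_{\alpha,\beta}$-stable object $E\in\Coh^{\beta}(X)$ with $\nu_{\alpha,\beta}(E)=0$ one bounds $\ch_3^{\beta}(E)$ by an Euler characteristic estimate against a well-chosen family of test objects.

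Concretely I would argue as follows. First, using the Structure Theorem \ref{thm:Bertram} and the derived dualizing functor, reduce to a convenient point of the curve $\nu_{\alpha,\beta}(E)=0$: the case $\overline{\Delta}_H(E)=0$ is treated separately, since by part (7) such $E$ is destabilized only at the vertical wall and can be described explicitly; and after tensoring by $\OO_X(H)$ (which shifts $\beta$) and dualizing (which flips its sign) one may assume $\beta$ lies in a fixed interval. Then pair $E$ with test objects of the form $\OO_X(D)$, or suitable shifts of them inside $\Coh^{\beta}(X)$, for divisor classes $D$ built from $H$ and from the geometry of $X$. Tilt-stability of $E$, the comparison between $\nu_{\alpha,\beta}(\OO_X(D))$ and $\nu_{\alpha,\beta}(E)$, and Serre duality (using that $X$ is Fano, so the top $\Ext$ vanishes for the relevant pairs) confine the nonzero cohomology of $\RHom$ between the test object and $E$ to a single degree, so that $\pm\chi$ is nonnegative. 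Expanding $\chi$ by Hirzebruch--Riemann--Roch and using $\nu_{\alpha,\beta}(E)=0$ to eliminate $H\cdot\ch_2^{\beta}(E)$, the leading term is controlled by the classical Bogomolov inequality $\overline{\Delta}_H(E)\geq 0$ of Theorem \ref{thm:bogomolov}, while what remains, after accounting for a term linear in $\ch_1^{\beta}(E)$, is exactly the expression $\ch_3^{\beta}(E)-\frac{\alpha^2}{6}\,H^2\cdot\ch_1^{\beta}(E)$. That linear term is where the cycle $\Gamma$ enters: it comes from the Todd class of $X$ and from the divisor classes $D\neq H$ along which $H\cdot D^2$ need not be nonnegative.

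The genuinely hard step — and the one I expect to be the main obstacle — is that the classical Bogomolov inequality is too weak to close the estimate: one needs a \emph{strong} Bogomolov inequality, Theorem \ref{thm:bogomolov_extension}, bounding $H\cdot\ch_2^{\beta}(E)$ (equivalently $\overline{\Delta}_H(E)$) for tilt-stable $E$ by a quantity strictly better than $\frac12(H^2\cdot\ch_1^{\beta}(E))^2/(H^3\cdot\ch_0^{\beta}(E))$, in a way that sees the geometry of $X$. In Picard rank one this is Li's Proposition~3.2, which rests on the classification of rank-one tilt-stable objects together with a delicate wall-and-chamber analysis. The task here is to push this to arbitrary Picard rank; the natural route is to induct on the length of a Jordan--H\"older filtration of $E$ along the curve $\nu_{\alpha,\beta}=0$, tracking the twisted slopes of the factors and feeding in the $\overline{\Delta}$-subadditivity of part~(6) of Theorem \ref{thm:Bertram}. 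The price one pays for higher Picard rank is exactly the appearance of the correction cycle $\Gamma$ with $\Gamma\cdot H\geq 0$, and all of Section \ref{sec:Bog} would be devoted to this.

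Finally, for the toric situations of Theorem \ref{thm:toric} one can bypass $\Gamma$ altogether. On a $\P^2$-bundle $X\to\P^1$ the toric Frobenius morphism $F_m\colon X\to X$ is finite and flat, and $F_{m*}\OO_X$ splits as a direct sum of line bundles whose twisted slopes become dense in $\R$ as $m\to\infty$. Pairing $E$ with the summands of $F_{m*}\OO_X(D)$ and letting $m\to\infty$ therefore supplies arbitrarily many test line bundles in the Euler characteristic estimate above, which forces the correction terms to cancel in the limit and yields the inequality with $\Gamma=0$, along the lines of the limiting argument for abelian threefolds in \cite{BMS14:abelian_threefolds}. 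The hypothesis that $H\cdot D^2\geq 0$ for all effective $D$ — weakened to the technical condition of Theorem \ref{thm:toric2} — is exactly what is needed to make the required $\Ext$-vanishing hold uniformly over all these summands.
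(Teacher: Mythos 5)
Your overall plan coincides with the paper's: you correctly recognize that Conjecture \ref{conj:bmt} is false as a general statement (blow-up of $\P^3$ at a point), that the provable target is the modified inequality of Theorem \ref{thm:main} with a cycle $\Gamma$ satisfying $\Gamma\cdot H\geq 0$, and that the original conjecture is recovered exactly where $\Gamma$ can be taken to be $0$ (Picard rank one, and the toric cases of Theorems \ref{thm:toric} and \ref{thm:toric2} via the Frobenius morphism). The reduction to $\overline{\beta}$-stable objects with $\overline{\beta}\in[0,1)$ and $\ch_0\geq 0$ (Lemma \ref{lem:reduction_alpha_0}), the Euler characteristic estimate against the single test object $\OO_X(H)$ with vanishing of $\Hom$ and $\Ext^2$ by stability and Serre duality (Lemmas \ref{lem:vanishing2} and \ref{lem:vanishing1}), and the toric Frobenius strategy are all as in the paper; for the latter, the actual mechanism is a growth-rate comparison ($\chi(\OO_X,\underline{m}^*E)$ grows like $m^3\ch_3(E)$ by Riemann--Roch, while stability of the line-bundle summands of $\underline{m}_*\OO_X$ forces $\hom+\ext^2=O(m^2)$), not a density/cancellation argument, but this is the same idea in different words.

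The one genuine gap is in your sketch of the key ingredient, Theorem \ref{thm:bogomolov_extension}. Induction on Jordan--H\"older factors together with the subadditivity of $\overline{\Delta}_H$ from Theorem \ref{thm:Bertram}(6) is only the reduction: in the paper it serves (Lemmas \ref{lem:slope_inequality}--\ref{lem:stable_everywhere}) to replace a putative counterexample by one of minimal $\overline{\Delta}_H$ which, up to shift and derived dual, is tilt stable for \emph{all} $(\alpha,\beta)$ and has $\ch_0\geq 2$; by itself it yields no contradiction and no quantitative lower bound on $\overline{\Delta}_H(E)/(H^3\cdot\ch_0(E))^2$. The missing engine is a self-pairing Euler characteristic estimate: stability gives $\hom(E,E)=1$; the hypotheses $0\leq\beta_-(E)\leq\beta_+(E)<1$ together with the Fano condition give $\overline{\beta}(E(-i_XH)[1])=\beta_+(E)-i_X<\beta_-(E)=\overline{\beta}(E)$, hence $\ext^2(E,E)=\hom(E,E(-i_XH)[1])=0$ by stability and Serre duality; and then $1\geq\chi(E,E)\geq\ch_0(E)^2-\tfrac{i_X\,\overline{\Delta}_H(E)}{2H^3}$ by Hirzebruch--Riemann--Roch and the Hodge index theorem, which rearranges to the stated bound. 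This is also precisely where the restriction to the anticanonical polarization $H=-K_X/i_X$ enters, and it is the same mechanism Li uses in rank one; your proposal does not isolate this step, and without it the argument does not close. A minor further correction: the cycle $\Gamma$ is not produced by the higher-Picard-rank Bogomolov analysis itself, but is chosen explicitly from $\td_2(X)$ (resp.\ $\ch_2(T_X)$) and the auxiliary functions in Section \ref{sec:main_standard} so that the Riemann--Roch expression for $\chi(\OO_X(H),E)$ becomes manifestly nonpositive.
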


The conjecture has been proved for $\P^3$ in \cite{Mac14:conjecture_p3} and for the smooth quadric hypersurface $Q \subset \P^4$ in \cite{Sch14:conjecture_quadric}. All other Fano threefolds of Picard rank one were handled in \cite{Li15:conjecture_fano_threefold}. Finally, it is known to hold for abelian threefolds with independent proofs in \cite{BMS14:abelian_threefolds} and \cite{MP15:conjecture_abelian_threefoldsI, MP16:conjecture_abelian_threefoldsII}. It turns out that the conjecture fails on the blow up of $\P^3$ in a single point as shown in \cite{Sch16:counterexample}. In this article we give an affirmative answer to the following natural follow up question in case $X$ is a Fano threefold and the polarization $H$ is given by the anticanonical divisor.

\begin{question}
\label{q:new_conjecture}
{\rm Is there a cycle $\Gamma\in A_1(X)_\R$ depending at most on $H$ such that $\Gamma \cdot H \geq 0$ and for any $\nu_{\alpha,\beta}$-stable object $E$ with $\nu_{\alpha,\beta}(E)=0$, we have
\[
\ch_3^\beta(E) \leq \Gamma \cdot \ch_1^\beta(E) + \frac{\alpha^2}{6} H^2 \cdot \ch_1^{\beta}(E)?
\]}
\end{question}

The condition $\Gamma \cdot H \geq 0$ is crucial for the reduction to the case $\alpha = 0$. In order to state it precisely, we first need the notion of $\overline{\beta}$-stability.

\begin{defn}{\rm
For any object $E \in \Coh^{\beta}(X)$, we define
\[
\overline{\beta}(E) = \begin{cases}
\frac{H^2 \cdot \ch_1(E) - \sqrt{\overline{\Delta}_H(E)}}{H^3 \cdot \ch_0(E)} & \ch_0(E) \neq 0, \\
\frac{H \cdot \ch_2(E)}{H^2 \cdot \ch_1(E)} & \ch_0(E) = 0.
\end{cases}
\]
Moreover, we say that $E$ is \textit{$\overline{\beta}$-(semi)stable}, if it is (semi)stable in a neighborhood of $(0, \overline{\beta}(E))$.}\end{defn}

By this definition we have $H \cdot\ch_2^{\overline{\beta}(E)}(E) = 0$.

\begin{prop}[{\cite[Proposition 5.1.3]{BMT14:stability_threefolds}}]
\label{prop:tilt_derived_dual}
Assume $E \in \Coh^{\beta}(X)$ is $\nu_{\alpha, \beta}$-semistable with $\nu_{\alpha, \beta}(E) \neq \infty$. Then there is a $\nu_{\alpha, -\beta}$-semistable object $\tilde{E} \in \Coh^{-\beta}(X)$ and a sheaf $T$ supported in dimension $0$ together with a triangle
\[\tilde{E} \to \RlHom(E, \OO_X)[1] \to T[-1] \to \tilde{E}[1].\]
\end{prop}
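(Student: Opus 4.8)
The plan is to study the derived‑dual functor $\mathbb{D}(-):=\RlHom(-,\OO_X)[1]$. Since $X$ is smooth, $\mathbb{D}$ is an anti‑autoequivalence of $\Db(X)$ with $\mathbb{D}^2\cong\id$, and from $\ch_i(\RlHom(E,\OO_X))=(-1)^i\ch_i(E)$ one checks that $\mathbb{D}$ interchanges the numerical data attached to $\Coh^\beta(X)$ and $\Coh^{-\beta}(X)$; precisely $\nu_{\alpha,\gamma}(\mathbb{D}(-))=-\nu_{\alpha,-\gamma}(-)$, while a $0$-dimensional sheaf has $H\cdot\ch_{\le 2}=0$ and is therefore invisible to every $\nu_{\alpha,\cdot}$. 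I will also use the standard facts: for a coherent sheaf $\mathcal F$ one has $\lExt^i(\mathcal F,\OO_X)=0$ unless $0\le i\le 3$ and $\codim\supp\lExt^i(\mathcal F,\OO_X)\ge i$; if $\mathcal F$ is torsion-free then $\lExt^3=0$ and $\lExt^2$ is $0$-dimensional, and if $\mathcal F$ is reflexive then moreover $\lExt^2=0$; and $\lHom(\mathcal F,\OO_X)$ is always torsion-free.

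The structural input is that a $\nu_{\alpha,\beta}$-semistable $E$ with $\nu_{\alpha,\beta}(E)\ne\infty$ admits no nonzero morphism in $\Coh^\beta(X)$ from a sheaf supported in dimension $\le 1$: such a sheaf lies in $\TT_\beta$, so a nonzero morphism has image a nonzero subobject of $E$ that is again a sheaf supported in dimension $\le 1$, hence of slope $\nu_{\alpha,\beta}=+\infty$, while the quotient still has finite slope $\nu_{\alpha,\beta}(E)$ --- contradicting semistability. I would deduce two things. First, $\HH^{-1}(E)$ is reflexive: if the cokernel $Q$ of $\HH^{-1}(E)\into\HH^{-1}(E)^{\vee\vee}$ had a $0$-dimensional subsheaf $Q_0$, its preimage $G_1$ in $\HH^{-1}(E)^{\vee\vee}$ would again lie in $\FF_\beta$ (its subsheaves have the same slopes as those of $\HH^{-1}(E)$), so $G_1[1]\in\Coh^\beta(X)$, the canonical map $\HH^{-1}(E)[1]\onto G_1[1]$ has kernel $Q_0$, and composing with $\HH^{-1}(E)[1]\into E$ produces the forbidden morphism; hence $Q$ is pure of dimension $1$ (or zero), $\lExt^3(Q,\OO_X)=0$, and $0\to\HH^{-1}(E)\to\HH^{-1}(E)^{\vee\vee}\to Q\to0$ gives $\lExt^2(\HH^{-1}(E),\OO_X)\cong\lExt^3(Q,\OO_X)=0$. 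Second, Serre duality gives $\Hom(\OO_{pt},E)\cong\Ext^3(E,\OO_{pt})^\vee\cong\bigl(\HH^3(\RlHom(E,\OO_X))\otimes k(pt)\bigr)^\vee$ for every closed point $pt$ (no $\Tor$ terms, since $\RlHom(E,\OO_X)$ lies in degrees $\le 3$), whence $\HH^3(\RlHom(E,\OO_X))=0$.

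Set $F:=\RlHom(E,\OO_X)[1]$ and $\tilde E:=\tau^{\le 0}F$. From the triangle $\RlHom(\HH^0(E),\OO_X)[1]\to F\to\RlHom(\HH^{-1}(E),\OO_X)\to$, the support bounds, reflexivity of $\HH^{-1}(E)$, and $\HH^3(\RlHom(E,\OO_X))=0$, the long exact cohomology sequence shows $F$ is concentrated in degrees $[-1,1]$, with $\HH^{-1}(F)=\lHom(\HH^0(E),\OO_X)$ torsion-free, $\HH^1(F)$ of dimension $\le 1$, and $\HH^2(F)=0$; setting $T:=\HH^1(F)$, the truncation triangle is precisely $\tilde E\to\RlHom(E,\OO_X)[1]\to T[-1]\to\tilde E[1]$. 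That $T$ is $0$-dimensional is the crux. Dualizing, $\mathbb{D}(T[-1])=\RlHom(T,\OO_X)[2]$ has $\HH^0=\lExt^2(T,\OO_X)$, and composing the resulting morphism $\RlHom(T,\OO_X)[2]\xrightarrow{g}E$ with $\lExt^2(T,\OO_X)\into\RlHom(T,\OO_X)[2]$ gives a morphism $\lExt^2(T,\OO_X)\to E$ in $\Coh^\beta(X)$ out of a sheaf of dimension $\le 1$, hence zero by the structural input; factoring $g$ through the cone of $\lExt^2(T,\OO_X)\into\RlHom(T,\OO_X)[2]$ and applying the octahedral axiom then realizes $\lExt^2(T,\OO_X)$ as a subsheaf of $\HH^{-1}(\mathbb{D}(\tilde E))=\lHom(\HH^0(\tilde E),\OO_X)$, which is torsion-free; since $\lExt^2(T,\OO_X)$ is torsion it must vanish, so $T$ is supported in dimension $0$. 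This is exactly where the hypothesis $\nu_{\alpha,\beta}(E)\ne\infty$ --- together with reflexivity of $\HH^{-1}(E)$ --- is indispensable: for a general object of $\Coh^\beta(X)$ the derived dual genuinely acquires $1$-dimensional cohomology in degree $1$ and nonzero cohomology in degree $2$, and I expect this step to be the main obstacle.

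It remains to place $\tilde E$ in $\Coh^{-\beta}(X)$ and prove $\nu_{\alpha,-\beta}$-semistability; here semistability of $E$, not merely $E\in\Coh^\beta(X)$, is used. For instance $\HH^{-1}(E)$ has no subsheaf of slope exactly $\beta$ (such a subsheaf, shifted by $[1]$, would be a subobject of $E$ of slope $+\infty$), and one deduces that $\HH^{-1}(F)=\lHom(\HH^0(E),\OO_X)\in\FF_{-\beta}$ and $\HH^0(F)\in\TT_{-\beta}$. I would then establish heart-membership and semistability of $\tilde E$ together: dualizing the triangle above shows $\mathbb{D}(\tilde E)$ differs from $E$ only by the $0$-dimensional sheaf $\lExt^3(T,\OO_X)$, so any subobject of $\tilde E$ in $\Coh^{-\beta}(X)$ that either violated heart-membership or destabilized $\tilde E$ would dualize --- using that $\mathbb{D}$ is contravariant, satisfies $\nu_{\alpha,\gamma}(\mathbb{D}(-))=-\nu_{\alpha,-\gamma}(-)$, and (by the analysis above, applied over $-\beta$) carries the two hearts to one another modulo $0$-dimensional sheaves --- into a destabilizing quotient of $E$, contradicting semistability of $E$. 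This final bookkeeping --- massaging dualized triangles into honest short exact sequences and carrying the $0$-dimensional corrections along --- is technically fiddly but causes no essential difficulty.
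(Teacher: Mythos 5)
This proposition is not proved in the paper at all: it is quoted verbatim from \cite[Proposition 5.1.3]{BMT14:stability_threefolds}, so there is no in-paper argument to compare against. Your reconstruction follows the same circle of ideas as the cited proof and I find it essentially correct: the key inputs (no nonzero maps to $E$ from sheaves supported in dimension $\leq 1$, hence reflexivity of $\HH^{-1}(E)$ and vanishing of the top cohomology of $\RlHom(E,\OO_X)$ via Serre duality, then the octahedron argument embedding $\lExt^2(T,\OO_X)$ into the torsion-free sheaf $\lHom(\HH^0(\tilde E),\OO_X)$ to force $T$ to be $0$-dimensional) all check out. The one place you genuinely under-argue is the final paragraph: membership of $\tilde E$ in $\Coh^{-\beta}(X)$ and its $\nu_{\alpha,-\beta}$-semistability is where most of the bookkeeping in the original proof lives, and some of your shortcuts need care --- e.g.\ a subsheaf $S\subset\HH^{-1}(E)$ with $\mu(S)=\beta$ does not immediately give a subobject $S[1]\subset E$, because $\FF_\beta$ is not closed under quotients; one must first pass to the saturation of $S$ (whose quotient is then checked to lie in $\FF_\beta$) before shifting into the heart. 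These are repairable details rather than gaps in the strategy.
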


The following lemma was first proved for the original conjecture in \cite{BMS14:abelian_threefolds}, but it works out in our case as well without any change in the proof. 

\begin{lem}
\label{lem:reduction_alpha_0}
Let $\Gamma \in A_1(X)_\R$ be a cycle such that $\Gamma \cdot H \geq 0$. Assume that for any $\overline{\beta}$-stable object $E \in \Coh^{\beta}(X)$ with $\overline{\beta}(E) \in [0,1)$ and $\ch_0(E) \geq 0$ the inequality
\[
\ch_3^{\overline{\beta}}(E) \leq \Gamma \cdot \ch_1^{\overline{\beta}}(E)
\]
holds. Then Question \ref{q:new_conjecture} has an affirmative answer with this cycle $\Gamma$.
\end{lem}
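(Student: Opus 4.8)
The plan is to reduce the general statement about $\nu_{\alpha,\beta}$-stable objects with $\nu_{\alpha,\beta}(E)=0$ to the boundary case $\alpha=0$ encoded by $\overline\beta$-stability, using the same limiting argument as in \cite{BMS14:abelian_threefolds}. First I would recall that the quadratic inequality $Q_{\alpha,\beta}(E)\geq 0$ from the introduction is equivalent to the desired inequality $\ch_3^\beta(E)\leq \Gamma\cdot\ch_1^\beta(E)+\tfrac{\alpha^2}{6}H^2\cdot\ch_1^\beta(E)$ once one knows $\overline\Delta_H(E)\geq 0$ (which holds by Theorem \ref{thm:bogomolov}) and $\Gamma\cdot H\geq 0$; more precisely, on the locus $\nu_{\alpha,\beta}(E)=0$ one has $H\cdot\ch_2^\beta(E)=\tfrac{\alpha^2}{2}H^3\cdot\ch_0^\beta(E)$, so the "cross term" $2(H\cdot\ch_2^\beta)(2H\cdot\ch_2^\beta-3\Gamma\cdot H\cdot\ch_0^\beta)$ and the $\alpha^2$-multiple of $3\tfrac{\Gamma\cdot H}{H^3}(H^3\cdot\ch_0^\beta)^2$ combine, and nonnegativity of $Q_{\alpha,\beta}$ is exactly the claimed bound. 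So the content is to prove $Q_{\alpha,\beta}(E)\geq 0$ for all $\nu_{\alpha,\beta}$-stable $E$ with $\nu_{\alpha,\beta}(E)=0$.

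Next I would exploit the structure theory. Since $Q_{\alpha,\beta}$ is (up to positive scalar) a quadratic form in the variables $(H^3\cdot\ch_0^\beta, H^2\cdot\ch_1^\beta, H\cdot\ch_2^\beta, \ch_3^\beta-\Gamma\cdot\ch_1^\beta)$ whose behaviour along the vertical wall and along rescaling is controlled, the standard move is: given $(\alpha_0,\beta_0)$ with $\nu_{\alpha_0,\beta_0}(E)=0$, deform $(\alpha,\beta)$ along the unique semicircular wall (or numerical wall) through $(\alpha_0,\beta_0)$ on which $\ch^\beta(E)$ stays constant in the relevant sense, down toward the point $(0,\overline\beta(E))$ on the $\beta$-axis. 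By Theorem \ref{thm:Bertram}(3) the curve $\nu_{\alpha,\beta}(E)=0$ is a hyperbola meeting every semicircular wall at its top; along this hyperbola $E$ remains $\nu_{\alpha,\beta}$-semistable until possibly a wall is hit, and the wall-crossing estimate Theorem \ref{thm:Bertram}(6) together with the additivity of the Chern character lets me propagate the inequality from the pieces. At the endpoint $\alpha\to 0$ the object becomes $\overline\beta$-stable (or a direct sum of such), with $H\cdot\ch_2^{\overline\beta}(E)=0$ by construction, and there $Q_{0,\overline\beta}(E)\geq 0$ is precisely the hypothesis $\ch_3^{\overline\beta}(E)\leq\Gamma\cdot\ch_1^{\overline\beta}(E)$ — here one also uses Proposition \ref{prop:tilt_derived_dual} to replace an object with $\ch_0<0$ by its (shifted) derived dual, reducing to the case $\ch_0(E)\geq 0$ that the hypothesis covers, and one needs $\overline\beta(E)\in[0,1)$, obtained after tensoring by an appropriate line bundle $\OO_X(mH)$, which shifts $\beta\mapsto\beta+m$ and $\Gamma$ is unchanged since it depends only on $H$.

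I would organize the argument as follows: (i) reduce to $Q_{\alpha,\beta}(E)\geq 0$; (ii) observe $Q_{\alpha,\beta}$ is a quadratic form for which the crucial facts are that it is nonnegative on actual walls by the additivity estimate and that, fixing $\beta$, $Q_{\alpha,\beta}(E)$ is (up to the $\alpha^2\overline\Delta_H$ term) independent of $\alpha$ on the locus $\nu_{\alpha,\beta}(E)=0$, so it suffices to check it at one point of each hyperbola; (iii) use Theorem \ref{thm:Bertram}(8) and Proposition \ref{prop:tilt_derived_dual} to push the check to $\overline\beta$-stable objects with $\ch_0\geq 0$; (iv) use tensoring by line bundles to force $\overline\beta(E)\in[0,1)$; (v) conclude by the hypothesis, invoking $\Gamma\cdot H\geq 0$ to handle the $\ch_0\neq 0$ normalization and to ensure the $\alpha^2$-coefficient in $Q$ stays nonnegative. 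The main obstacle I anticipate is step (ii)–(iii): carefully controlling what happens at actual walls encountered while deforming $\alpha\to 0$ — one must show that a potential destabilization either does not occur before reaching the $\beta$-axis or, if it does, that the Harder–Narasimhan/Jordan–Hölder factors each satisfy the inequality and that the wall-crossing inequality of Theorem \ref{thm:Bertram}(6) combines with $\Gamma\cdot H\geq 0$ to give the inequality for $E$ itself. This is exactly the point where the sign condition $\Gamma\cdot H\geq 0$ is used, and it is the reason the reduction in this lemma is not entirely formal; however, since the referenced argument in \cite{BMS14:abelian_threefolds} is designed to be insensitive to the precise form of the right-hand side as long as it is "linear in $\ch_1^\beta$ with a nonnegative $\alpha^2$-coefficient," it transports verbatim, which is what the lemma asserts.
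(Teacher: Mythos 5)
Your overall skeleton matches the paper's proof: reduce via the derived dual (Proposition \ref{prop:tilt_derived_dual}) to $\ch_0(E)\geq 0$, twist by $\OO_X(aH)$ to force $\overline{\beta}(E)\in[0,1)$, then propagate the inequality from the point $(0,\overline{\beta}(E))$ up along the hyperbola $\nu_{\alpha,\beta}(E)=0$, handling any destabilizing wall met on the way by Theorem \ref{thm:Bertram}(6) and additivity of the Chern character. However, your central propagation step (ii) is wrong as stated, and this is exactly where the real content of the lemma lies. On the locus $\nu_{\alpha,\beta}(E)=0$ one computes
\[
Q_{\alpha,\beta}(E)\;=\;6\,\bigl(H^2\cdot\ch_1^{\beta}(E)\bigr)\Bigl(\Gamma\cdot\ch_1^{\beta}(E)+\tfrac{\alpha^2}{6}H^2\cdot\ch_1^{\beta}(E)-\ch_3^{\beta}(E)\Bigr),
\]
and every factor here varies along the hyperbola (the $\Gamma\cdot H$ contributions cancel, but $\ch_1^{\beta}$, $\ch_3^{\beta}$ depend on $\beta$), so $Q$ is not ``independent of $\alpha$ up to the $\alpha^2\overline{\Delta}_H$ term,'' and it does not suffice to check it at an arbitrary single point of the hyperbola. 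What the paper proves instead — and what your sketch never establishes — is a monotonicity statement: the defect $\Gamma\cdot\ch_1^{\beta}(E)+\tfrac{\alpha^2}{6}H^2\cdot\ch_1^{\beta}(E)-\ch_3^{\beta}(E)$ is decreasing along the hyperbola as $\alpha$ decreases, precisely because $\Gamma\cdot H\geq 0$ and $\ch_0(E)\geq 0$. This is where the hypothesis $\Gamma\cdot H\geq 0$ enters; you instead attribute its use to the wall-crossing step, where in fact only Theorem \ref{thm:Bertram}(6), additivity of $\ch$, and an induction are needed.

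The second point you leave implicit is how the inequality for the Jordan--H\"older factors at a wall is obtained: the paper runs an induction on $\overline{\Delta}_H(E)$, with base case $\overline{\Delta}_H(E)=0$ (where $E$ is automatically $\overline{\beta}$-stable by Theorem \ref{thm:Bertram}(7)), and the inductive step uses that the stable factors at the wall have strictly smaller $\overline{\Delta}_H$ by Theorem \ref{thm:Bertram}(6); their inequalities, valid at the wall point where all tilt slopes vanish, sum to the inequality for $E$ there, and the monotonicity then carries it back up to $(\alpha,\beta)$. Your appeal to the BMS argument ``transporting verbatim'' is in the spirit of the paper, but as written your mechanism (near-constancy of $Q$ along the hyperbola, with $\Gamma\cdot H\geq 0$ deployed at the walls) would not close the argument; you need the explicit derivative computation along $\nu_{\alpha,\beta}(E)=0$ and the induction on $\overline{\Delta}_H$.
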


\begin{proof}
By Proposition \ref{prop:tilt_derived_dual} we can use the derived dual to reduce to the case $\ch_0(E) \geq 0$. Tensoring with lines bundles $\OO(aH)$ for $a \in \Z$ makes it possible to further reduce to $\overline{\beta}(E) \in [0,1)$.

A straightforward computation shows that
\[
\Gamma \cdot \ch_1^\beta(E) + \frac{\alpha^2}{6} H^2 \cdot \ch_1^{\beta}(E) - \ch_3^\beta(E)
\]
is decreasing along the hyperbola $\nu_{\alpha,\beta}(E)=0$ as $\alpha$ decreases, because $\Gamma \cdot H \geq 0$ and $\ch_0(E) \geq 0$. By using Theorem \ref{thm:Bertram}, (6), we can then proceed by induction on $\overline{\Delta}_H(E)$ to show that it is enough to prove the inequality for $\overline{\beta}$-stable objects.

Indeed, if $\overline{\Delta}_H(E) = 0$, then $E$ is $\overline{\beta}$-stable implying the claim.
Assume $\overline{\Delta}_H(E) > 0$. If $E$ is $\overline{\beta}$-stable, we are done. Otherwise, $E$ is destabilized along a wall between $(\alpha, \beta)$ and $(0, \overline{\beta}(E))$. Let $F_1, \ldots, F_n$ be the stable factors of $E$ along this wall. By induction, the inequality holds for $F_1, \ldots, F_n$ and so it does for $E$ by linearity of the Chern character.
\hfill $\Box$
\end{proof}

As in \cite{BMS14:abelian_threefolds}, we get a quadratic inequality for any tilt semistable object. This still implies the support property for Bridgeland stability conditions, as in \cite[Section 8]{BMS14:abelian_threefolds}.

\begin{prop}\label{prop:support_property}
Assume that Question 2.4 has an affirmative answer.
Then any $\nu_{\alpha, \beta}$-stable object $E$ satisfies
\begin{align*}
Q_{\alpha, \beta}(E) = &\alpha^2\, \left(\overline{\Delta}_H(E) + 3\, \frac{\Gamma\cdot H}{H^3} (H^3\cdot \ch_0^\beta(E))^2\right)\\
& + 2\, (H \cdot \ch_2^{\beta}(E)) (2\, H\cdot \ch_2^\beta(E) - 3\, \Gamma\cdot H \cdot \ch_0^\beta(E)) \\
& - 6 (H^2 \cdot \ch_1^{\beta}(E)) (\ch_3^{\beta}(E) - \Gamma \cdot \ch_1^{\beta}(E)) \geq 0.
\end{align*}
\end{prop}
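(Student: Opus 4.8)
The strategy is to reproduce the argument of \cite[Section~8]{BMS14:abelian_threefolds}, keeping track of the extra cycle $\Gamma$. Abbreviate $r = H^3\cdot\ch_0^\beta(E)$, $c = H^2\cdot\ch_1^\beta(E)$, $d = H\cdot\ch_2^\beta(E)$, $e = \ch_3^\beta(E)$, $\gamma = \Gamma\cdot\ch_1^\beta(E)$, and set $g = \Gamma\cdot H\geq 0$. Expanding the definition, one finds $Q_{\alpha,\beta}(E) = \alpha^2 A + B(\beta)$, where $A = \overline{\Delta}_H(E) + 3\tfrac{g}{H^3}\,r^2$ depends neither on $\alpha$ nor on $\beta$, and $B(\beta) = 4d^2 - 6\tfrac{g}{H^3}\,dr - 6c(e-\gamma)$ does not depend on $\alpha$. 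Since $H$ is ample, $\overline{\Delta}_H(E)\geq 0$ by Theorem~\ref{thm:bogomolov}, and $g\geq 0$ by hypothesis, so $A\geq 0$ and $Q_{\alpha,\beta}(E)$ is non-decreasing in $\alpha$. This monotonicity — which uses precisely the assumption $\Gamma\cdot H\geq 0$ — is the mechanism that will let us transfer the inequality from the curve $\nu_{\alpha,\beta}(E)=0$ to the whole upper half-plane.

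Next I would record the behaviour of $Q$ on the locus $\nu_{\alpha,\beta}(E)=0$. There one has $d = \tfrac{\alpha^2}{2}\,r$ (and $d=0$ if $r=0$), and substituting this into the formula above makes all terms carrying the factor $g/H^3$ cancel, leaving
\[
Q_{\alpha,\beta}(E) = \tfrac{2c}{r}\big(dc - 3r(e-\gamma)\big)\ \ (r\neq 0), \qquad Q_{\alpha,\beta}(E) = \alpha^2 c^2 - 6c(e - \gamma)\ \ (r=0).
\]
Every object of $\Coh^\beta(X)$ has $c = H^2\cdot\ch_1^\beta(E)\geq 0$, so in both cases $Q_{\alpha,\beta}(E)\geq 0$ is equivalent to the inequality $\ch_3^\beta(E)\leq \Gamma\cdot\ch_1^\beta(E) + \tfrac{\alpha^2}{6}H^2\cdot\ch_1^\beta(E)$ of Question~\ref{q:new_conjecture}. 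Granting the affirmative answer, $Q_{\alpha,\beta}(E)\geq 0$ holds at every point of $\{\nu_{\alpha,\beta}(E)=0\}$ at which $E$ is $\nu_{\alpha,\beta}$-stable; and a short computation with the bilinear form associated with $Q$ — using that each factor has non-negative $H^2\cdot\ch_1^\beta$ and satisfies the inequality of Question~\ref{q:new_conjecture} — extends this to $\nu_{\alpha,\beta}$-semistable objects on that locus.

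It remains to handle an arbitrary point $(\alpha_0,\beta_0)$ at which $E$ is $\nu_{\alpha_0,\beta_0}$-stable. If $\nu_{\alpha_0,\beta_0}(E)=+\infty$, i.e. $c=0$, then $\overline{\Delta}_H(E) = -2rd$ (which is $\geq 0$ by Theorem~\ref{thm:bogomolov}) forces $rd\leq 0$, whence $B(\beta_0) = 4d^2 - 6\tfrac{g}{H^3}dr\geq 0$ and $Q_{\alpha_0,\beta_0}(E) = \alpha_0^2 A + B(\beta_0)\geq 0$ directly. If $\nu_{\alpha_0,\beta_0}(E)\neq\infty$, I would argue exactly as in the proof of Lemma~\ref{lem:reduction_alpha_0} and in \cite[Section~8]{BMS14:abelian_threefolds}: first, using the derived dual of Proposition~\ref{prop:tilt_derived_dual} together with tensoring by line bundles $\OO(mH)$, reduce to a configuration in which $(\alpha_0,\beta_0)$ can be connected to the curve $\nu_{\alpha,\beta}(E)=0$ through the $\nu$-semistable locus by letting $\alpha$ decrease; then let $\alpha$ decrease. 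By Theorem~\ref{thm:Bertram} the semistable locus along this segment is cut out by finitely many nested numerical walls. If $E$ stays semistable until $\alpha$ reaches the curve, we are done by the previous paragraph together with the monotonicity of $Q$ in $\alpha$. Otherwise $E$ is destabilized at a wall; by Theorem~\ref{thm:Bertram}(6) its Jordan--H\"older factors there have strictly smaller $\overline{\Delta}_H$ (the borderline cases being immediate), so one inducts on $\overline{\Delta}_H(E)$, using the non-negativity of the bilinear form of $Q_{\alpha_0,\beta_0}$ on the classes destabilizing along that wall to pass the inequality from the factors back to $E$.

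The main obstacle is this last step. Making the induction close requires the ``support-property'' positivity — that the symmetric bilinear form associated with $Q_{\alpha,\beta}$ is non-negative on pairs of classes of equal $\nu_{\alpha,\beta}$-slope — and it requires checking that the derived dual really does produce the sign configuration needed to descend to $\{\nu_{\alpha,\beta}(E)=0\}$ without leaving the semistable locus. Both points are exactly those treated in \cite[Section~8]{BMS14:abelian_threefolds}; the only new feature here is the presence of $\Gamma$, and one verifies that the extra terms it contributes to $A$ and to the associated bilinear form are harmless precisely because $\Gamma\cdot H\geq 0$, so that the argument goes through with no essential change.
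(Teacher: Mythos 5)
Your computation on the locus $\nu_{\alpha,\beta}(E)=0$ (the cancellation of the $\Gamma\cdot H$ terms and the equivalence with the inequality of Question \ref{q:new_conjecture}) and your treatment of the case $H^2\cdot\ch_1^\beta(E)=0$ are fine, but the step that transfers the inequality from that locus to an arbitrary stable point $(\alpha_0,\beta_0)$ has a genuine gap. Your mechanism is vertical: $Q_{\alpha,\beta}(E)=\alpha^2A+B(\beta)$ with $A\geq0$, so you want to descend at fixed $\beta_0$ until you reach the curve $\nu_{\alpha,\beta}(E)=0$. But for $\ch_0(E)\neq0$ that curve is the hyperbola $(\beta-\mu(E))^2-\alpha^2=\overline{\Delta}_H(E)/(H^3\cdot\ch_0(E))^2$, so the vertical line $\beta=\beta_0$ never meets it when $\beta_-(E)<\beta_0<\beta_+(E)$ --- a situation that genuinely occurs for stable objects (e.g.\ a slope-stable bundle at large $\alpha$ with $\beta_0$ slightly above $\beta_-(E)$). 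The reduction you invoke cannot repair this: tensoring by $\OO(mH)$ translates $\beta_0$, $\mu(E)$ and $\beta_\pm(E)$ by the same integer, and the derived dual of Proposition \ref{prop:tilt_derived_dual} reflects all of them about $\beta=0$, so the position of $\beta_0$ relative to $[\beta_-(E),\beta_+(E)]$ is unchanged and the vertical ray still misses the curve. Your fallback, induction on $\overline{\Delta}_H$ through destabilizing walls, then has no base case in this region; moreover it rests on an unproved convexity property of $Q_{\alpha,\beta}$ (that the associated bilinear form behaves well on equal-slope classes, e.g.\ negative semi-definiteness on the kernel of the central charge, so that $Q\geq 0$ for the Jordan--H\"older factors implies $Q\geq 0$ for $E$), and the hypothesis of the proposition only provides the inequality of Question \ref{q:new_conjecture} for \emph{stable} objects, so that passage would need an actual argument rather than a reference.

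The paper's proof avoids all of this with one observation you are missing: $Q_{\alpha,\beta}(E)=0$ is itself a numerical wall for $v=\ch(E)$, since $Q_{\alpha,\beta}(E)=0$ holds exactly when $\nu_{\alpha,\beta}(E)=\nu_{\alpha,\beta}\bigl(H^2\cdot\ch_1(E),\,2H\cdot\ch_2(E)-3H\cdot\Gamma\cdot\ch_0(E),\,3\ch_3(E)-3\Gamma\cdot\ch_1(E),\,0\bigr)$, and $Q_{\alpha,\beta}(E)\geq0$ cuts out the complement of a semi-disc centered on the $\beta$-axis (or a quadrant beside a vertical line). Because numerical walls with respect to $v$ do not intersect (Theorem \ref{thm:Bertram}(2)), the sign of $Q$ is constant along the unique semicircular numerical wall $W$ through $(\alpha_0,\beta_0)$, and by Theorem \ref{thm:Bertram}(3) the top point of $W$ lies on the hyperbola $\nu_{\alpha,\beta}(E)=0$, where $E$ is still stable and your first computation applies; no induction, no duality reduction, and no convexity lemma are needed. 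If you wish to keep a monotonicity-style argument, the correct motion is along the wall $W$ up to its top point, not along a vertical line.
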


\begin{proof}
Observe that if $\nu_{\alpha, \beta}(E) = 0$, then $Q_{\alpha, \beta}(E) \geq 0$ is equivalent to $\ch_3^{\beta}(E) \leq \Gamma \cdot \ch_1^{\beta}(E) + \tfrac{\alpha^2}{6} H^2 \cdot \ch_1^{\beta}(E)$. Let $E \in \Coh^{\beta}(X)$ be a $\nu_{\alpha, \beta}$-stable object with $\ch(E) = v$.
If $(\alpha, \beta)$ is on the unique numerical vertical wall for $v$, then $H^2\cdot\ch_1^{\beta}(E) = 0$, $H\cdot \ch_2^\beta(E)\geq0$, and $\ch_0^\beta(E)\leq0$. Therefore, since $\Gamma\cdot H\geq0$, we have
$Q_{\alpha, \beta}(E) \geq 0$.

As a consequence, $(\alpha, \beta)$ lies on a unique numerical semicircular wall $W$ with respect to $v$. One computes that there are $x,y \in \R$, $\delta\in\R_{\geq0}$ such that
\[
Q_{\alpha, \beta}(E) \geq 0 \Leftrightarrow \delta \alpha^2 + \delta \beta^2 + x \beta + y \alpha \geq 0.
\]
Moreover, the equality $Q_{\alpha, \beta}(E) = 0$ holds if and only if
\[
\nu_{\alpha, \beta}(E) = \nu_{\alpha, \beta}\left(H^2 \cdot \ch_1(E), 2H \cdot \ch_2(E)- 3H \cdot \Gamma\cdot\ch_0(E), 3\ch_3(E) - 3\Gamma \cdot \ch_1(E), 0 \right).
\]
In particular, the equation $Q_{\alpha, \beta}(E) \geq 0$ defines either the complement of a semi-disc with center on the
$\beta$-axis or a quadrant to one side of a vertical line. Moreover, $Q_{\alpha, \beta}(E) = 0$ is a numerical wall
with respect to $v$. Since numerical walls do not intersect, the inequality holds on either any or no point of $W$. Computing it at the top point of $W$ concludes the proof.
\hfill $\Box$
\end{proof}

\section{Li's Bogomolov inequality}\label{sec:Bog}

Let $X$ be a Fano threefold of index $i_X$. Consider tilt stability with respect to the polarization $H=-\frac{K_X}{i_X}$. For any tilt semistable object $E$ with $\ch_0(E) \neq 0$, we define

\begin{align*}
\beta_{-}(E) &= \mu(E) - \sqrt{\frac{\overline{\Delta}_H(E)}{(H^3 \cdot \ch_0(E))^2}} \\
\beta_{+}(E) &= \mu(E) + \sqrt{\frac{\overline{\Delta}_H(E)}{(H^3 \cdot \ch_0(E))^2}}.
\end{align*}

If $\ch_0(E) > 0$, we have $\beta_{-}(E) = \overline{\beta}$. If $\ch_0(E) < 0$, then $\beta_{+}(E) = \overline{\beta}$. Note that $\beta_{-}(E) \leq \beta_{+}(E)$ are the two solutions to the equation $\nu_{0, \beta}(E) = 0$. The main result of this section is a slight modification of \cite[Proposition 3.2]{Li15:conjecture_fano_threefold} beyond Picard rank one.

\begin{thm}
\label{thm:bogomolov_extension}
Let $E$ be a tilt stable object other than a shift of $\OO_X$ or an ideal sheaf of points. If additionally $\ch_0(E) \neq 0$ and $0 \leq \beta_{-}(E) \leq \beta_{+}(E) < 1$, then the inequality
\[
\frac{\overline{\Delta}_H(E)}{(H^3 \cdot \ch_0(E))^2} \geq \min \left\{\frac{1}{(H^3)^2}, \frac{3}{2 i_X H^3} \right\}
\]
holds. 
\end{thm}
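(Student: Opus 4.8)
The plan is to follow the strategy of Li's original argument \cite{Li15:conjecture_fano_threefold}, which bounds $\overline{\Delta}_H(E)$ from below by producing a nonzero morphism either from or to a line bundle and then controlling the homological algebra. First I would use the hypothesis $0 \leq \beta_-(E) \leq \beta_+(E) < 1$ together with Theorem \ref{thm:Bertram}(8) and the wall-crossing structure to put $E$ in a convenient position: by tensoring with $\OO_X$ and dualizing one reduces to the case $\ch_0(E) > 0$, so that $\beta_-(E) = \overline{\beta}(E) \in [0,1)$. The key point is that for $\beta$ slightly below $\overline\beta(E)$, the object $E$ lies in $\Coh^\beta(X)$ with $\nu_{0,\beta}(E)$ close to $0$ from the appropriate side, and the line bundles $\OO_X$, $\OO_X(H)$, etc., are tilt-stable with computable slopes there (since $H = -K_X/i_X$ is the natural polarization). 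Because $\overline\beta(E) \in [0,1)$, both $\OO_X$ and $\OO_X(H)$ sit on the correct side of the relevant walls.

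The heart of the argument is an Euler-characteristic / Hom-vanishing estimate. I would consider $\Hom(\OO_X, E)$ and $\Hom(E, \OO_X(H)[k])$ (equivalently $\Ext$ groups against $\OO_X$ twisted), and use tilt stability to kill the outer Ext groups: stability forces $\Hom(\OO_X, E) = 0$ or $\Hom(E, \OO_X(\text{twist})[\,\cdot\,]) = 0$ depending on the comparison of tilt slopes at $(\epsilon, \beta)$ for $\beta$ near $\overline\beta(E)$. Here is where the hypothesis that $E$ is \emph{not} a shift of $\OO_X$ and \emph{not} an ideal sheaf of points is used — these are precisely the tilt-stable objects for which the relevant Hom or Ext against $\OO_X$ fails to vanish, so excluding them guarantees the vanishing we need. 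Combining the vanishing with Hirzebruch–Riemann–Roch for $\chi(\OO_X, E)$ (respectively $\chi(E, \OO_X(H))$) and Serre duality on the Fano threefold, the alternating sum of dimensions becomes an inequality $\chi(\OO_X, E) \leq 0$ (or $\geq 0$), and expanding $\chi$ via Todd class bookkeeping turns this into a polynomial inequality in $H \cdot \ch(E)$. The two terms in the $\min$ — $\tfrac{1}{(H^3)^2}$ and $\tfrac{3}{2 i_X H^3}$ — arise from the two natural choices of test object ($\OO_X$ versus $\OO_X(H)$, the latter bringing in the index $i_X$ through $c_1(X) = i_X H$), and one keeps whichever gives the weaker (hence always valid) bound.

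The main obstacle, and the place where genuine work beyond Picard rank one is needed, is the estimate of the ``error terms'' in Hirzebruch–Riemann–Roch coming from $\td_2(X)$ and from the part of $\ch(E)$ not detected by $H \cdot \ch_{\leq 2}$. In the Picard rank one case these are scalars and the computation closes immediately; in general one must bound $\ch_3(E)$ and the intersection of $\ch_1(E)$ with $c_2(X)$ (or $\td_2$) using only $H \cdot \ch_{\leq 2}(E)$ and positivity ($H$ ample, $-K_X$ ample). I expect this requires an auxiliary inequality of Hodge-index type — controlling $\Gamma' \cdot \ch_1^\beta(E)$ for suitable cycles $\Gamma'$ built from $H$ and $c_2(X)$ — together with the constraint $\beta_+(E) < 1$ to keep the twisted characters in a bounded region. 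Once these error terms are absorbed, the inequality follows by the same final wall-computation as in \cite{Li15:conjecture_fano_threefold}; the hypothesis $\beta_+(E) < 1$ is exactly what makes the relevant coefficient have the right sign.
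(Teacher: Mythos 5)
Your overall flavor (stability-induced Hom vanishing, Serre duality, Hirzebruch--Riemann--Roch, Hodge index) is right, but the pairing you build the argument on is not the one that works, and the issue you flag as ``the main obstacle'' is in fact fatal to your route. The paper's Euler-characteristic estimate is the self-pairing $\chi(E,E)\leq 1$: on a Fano threefold
\[
\chi(E,E)=\ch_0(E)^2-\tfrac{i_X}{2}\,H\cdot\bigl(\ch_1(E)^2-2\ch_0(E)\ch_2(E)\bigr),
\]
because the $\ch_3(E)$ and $\td_2(X)$ contributions cancel identically in $\ch(E^\vee)\cdot\ch(E)\cdot\td(X)$. The Hodge Index Theorem bounds this below by $\ch_0(E)^2-\tfrac{i_X\overline{\Delta}_H(E)}{2H^3}$, and together with $\hom(E,E)=1$ and $\ext^2(E,E)=\hom(E,E(-i_XH)[1])=0$ (Serre duality plus stability, using $\beta_+(E)-i_X<\beta_-(E)$, which is exactly where $0\leq\beta_-\leq\beta_+<1$ enters) one gets $\overline{\Delta}_H(E)/(H^3\cdot\ch_0(E))^2\geq 2(\ch_0(E)^2-1)/(i_XH^3\ch_0(E)^2)\geq 3/(2i_XH^3)$ once $\ch_0(E)\geq 2$. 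Your pairings $\chi(\OO_X,E)$ and $\chi(E,\OO_X(H))$ instead bring in $\ch_3(E)$ and $c_2(X)\cdot\ch_1(E)$, which are not controlled by $H\cdot\ch_{\leq 2}(E)$: no Hodge-index-type inequality bounds them, and bounding $\ch_3$ of tilt-stable objects is precisely the content of Theorem \ref{thm:main}, for which Theorem \ref{thm:bogomolov_extension} is the input --- the estimates $\chi(\OO(H),E)\leq 0$ appear in Section \ref{sec:main_standard} to deduce Theorem \ref{thm:main} \emph{from} Theorem \ref{thm:bogomolov_extension}, not the other way around. Relatedly, the two constants in the $\min$ do not come from two choices of test line bundle (the argument yields $3/(2i_XH^3)$ outright; the $\min$ only weakens the constant), and the excluded objects are excluded because they genuinely violate the inequality: shifts of $\OO_X$ and ideal sheaves of points have $\overline{\Delta}_H=0$, and Lemma \ref{lem:li_rank_one} shows they are the only rank $\pm1$ exceptions --- not because some Hom against $\OO_X$ fails to vanish.

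The second missing ingredient is the reduction that occupies most of Section \ref{sec:Bog}. The vanishing $\ext^2(E,E)=\hom(E,E(-i_XH)[1])=0$ requires $E$ and $E(-i_XH)[1]$ to be stable at the same point near $(0,\overline{\beta}(E))$, whereas the hypothesis only gives stability of $E$ near one point; stability of $E(-i_XH)[1]$ there amounts to stability of $E$ near $(0,\overline{\beta}(E)+i_X)$, which is not given. The paper therefore takes a putative counterexample with $\overline{\Delta}_H$ minimal and, using the wall structure (Theorem \ref{thm:Bertram}), the slope lemma for Jordan--H\"older factors (Lemma \ref{lem:slope_inequality}), and Lemmas \ref{lem:stable_in_quadrant}--\ref{lem:stable_everywhere}, replaces it by one such that $E$ or $E[1]$ is tilt stable for \emph{all} $(\alpha,\beta)$ with $\ch_0\geq 2$, ruling out $\OO_X[1]$ as a destabilizing factor along the way. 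Your normalization by tensoring and dualizing does not achieve this, and without it the Serre-duality vanishing --- and hence the whole Euler-characteristic estimate --- is unavailable.
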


\begin{rmk}
{\rm Note, that when $X$ has index $1$ or $2$ and its Picard rank is at least $2$, then $H^3 \geq 2$, and
\[
\min \left\{\frac{1}{(H^3)^2}, \frac{3}{2 i_X H^3} \right\} = \frac{1}{(H^3)^2}.
\]
That turns out to be the only case in which we use this theorem in the following sections.}
\end{rmk}

The idea of the proof is as follows. Assuming there exists an object $E$ contradicting the theorem, we will show that $E$ can be chosen such that either $E$ or $E[1]$ is tilt stable for all $\alpha > 0$ and $\beta \in \R$. The conditions on $\beta_{\pm}(E)$ will then allow to prove $\ext^2(E,E) = 0$. A contradiction can be obtained from estimating the Euler characteristic $\chi(E,E) \leq 1$. We will fill the details in a series of lemmas.

\begin{lem}
\label{lem:li_rank_one}
If $E$ is a tilt stable object with rank $\pm 1$, $0 \leq \beta_{-}(E) \leq \beta_{+}(E) < 1$, and $\overline{\Delta}_H(E) = 0$, then $E$ is a shift of $\OO_X$ or an ideal sheaf of points. That means $\overline{\beta}(E) = 0$. In particular, Theorem \ref{thm:bogomolov_extension} holds for rank $\pm 1$ objects.
\end{lem}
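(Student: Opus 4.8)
The plan is to analyze a tilt stable object $E$ of rank $\pm 1$ with $\overline{\Delta}_H(E) = 0$ and $\beta_\pm(E) \in [0,1)$ and show it must be (a shift of) a twist of $\OO_X$, then force the twist to be trivial. First I would reduce to the case $\ch_0(E) = 1$: if $\ch_0(E) = -1$, replace $E$ by the derived dual construction of Proposition \ref{prop:tilt_derived_dual}, or more simply observe that $E[1]$ has rank $1$ and that both the hypothesis $\overline{\Delta}_H = 0$ and the interval condition on $\beta_\pm$ are symmetric under this operation (the roles of $\beta_-$ and $\beta_+$ swap but the interval $[0,1)$ is what matters). So assume $\ch_0(E) = 1$.

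Next I would use Theorem \ref{thm:Bertram}(7): since $\overline{\Delta}_H(E) = 0$, the object $E$ can only be destabilized at the unique numerical vertical wall $\beta = \mu(E) = v_1/v_0$. Because $\overline{\Delta}_H(E) = 0$ we have $\beta_-(E) = \beta_+(E) = \mu(E) = \overline{\beta}(E)$, and the hypothesis gives $\mu(E) \in [0,1)$. For $\beta$ slightly less than $\mu(E)$ and $\alpha$ large, $E$ is tilt stable; by Theorem \ref{thm:Bertram}(8), since $\ch_0(E) = 1 > 0$, $E$ is a slope-semistable sheaf. A rank-one slope-semistable sheaf with $\overline{\Delta}_H = 0$: I would argue its double dual $E^{\vee\vee}$ is a line bundle $\OO_X(D)$ with $H^2 \cdot D = H^2 \cdot \ch_1(E)$, and $\overline{\Delta}_H = 0$ together with the Hodge index theorem forces $D \equiv \mu(E) H$ numerically — but $D$ is an honest divisor class and $\mu(E) = H^2\cdot\ch_1(E)/H^3$, so combined with $\overline\Delta_H(E) = (H^2\cdot\ch_1)^2 - 2H^3(H\cdot\ch_2) = 0$ and the fact that the quotient $E^{\vee\vee}/E$ is supported in dimension $\le 1$, I would conclude $\ch(E) = \ch(I_Z(D))$ for a line bundle twist of an ideal sheaf of a subscheme $Z$ of dimension $\le 1$; the numerical condition $\overline\Delta_H=0$ then forces $\dim Z \le 0$ (a codimension-two part would contribute nothing to $\overline\Delta_H$, but a curve... here I need to be careful — actually a $1$-dimensional $Z$ also leaves $\ch_{\le 1}$ unchanged, so $\overline\Delta_H$ is insensitive to it; the constraint comes instead from tilt stability at the wall).

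The cleaner route, which I would actually pursue, is to twist: replace $E$ by $E \otimes \OO_X(-\lfloor\mu(E)\rfloor H)$, harmless since $\mu(E) \in [0,1)$ means $\lfloor \mu(E)\rfloor = 0$ and no twist is needed — so $\mu(E) \in [0,1)$ is already the normalized range. Then $\ch_1^{\overline\beta}(E) = \ch_1(E) - \mu(E)H\cdot\ch_0(E)$ has $H^2\cdot\ch_1^{\overline\beta}(E) = 0$. An object of rank $1$ in $\Coh^\beta(X)$ with $H^2\cdot\ch_1^\beta = 0$ that is tilt stable near the vertical wall: I would show such an object, up to the shift, is $\OO_X$ or an ideal sheaf $I_Z$ with $Z$ of dimension $\le 1$ — and then invoke the hypothesis that $E$ is \emph{not} a shift of $\OO_X$ nor an ideal sheaf of points to conclude the curve case cannot have $\overline\Delta_H = 0$...

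The honest obstacle: I expect the real work is the classification of rank-one tilt stable sheaves with $\overline\Delta_H = 0$ as exactly (twists of) $\OO_X$ and ideal sheaves of points, i.e. ruling out $I_C(D)$ for a curve $C$. The key is that $\overline\Delta_H(I_C(D)) = \overline\Delta_H(\OO_X(D)) = 0$ automatically (since $\ch_{\le 1}$ agree), so $\overline\Delta_H = 0$ does \emph{not} distinguish them — instead one must use that $I_C(D)$ with $C$ a genuine curve fails to be tilt stable at the relevant wall (it is destabilized by $\OO_X(D) \onto \OO_C(D)$, which sits on the vertical wall), whereas the hypothesis says $E$ \emph{is} stable there. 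That destabilization argument, checking the quotient $\OO_X(D)\onto\OO_C(D)$ violates the tilt-stability inequality at $\beta = \mu(E)$, is the crux; everything else is bookkeeping with the formulas for $\ch^\beta$ and the definitions of $\beta_\pm$. Finally, $\overline\beta(E) = \mu(E)$ combined with $D \equiv 0$ (forced by $\overline\Delta_H = 0$, Hodge index, and $D$ effective-or-its-negative-effective in the $[0,1)$ window) gives $\overline\beta(E) = 0$, and the rank-$\pm1$ case of Theorem \ref{thm:bogomolov_extension} follows because such $E$ are precisely the objects excluded from its statement, so there is nothing to prove for them.
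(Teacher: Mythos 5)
There is a genuine error at the heart of your argument. You claim that $\overline{\Delta}_H(I_C(D)) = \overline{\Delta}_H(\OO_X(D)) = 0$ ``automatically, since $\ch_{\leq 1}$ agree,'' and conclude that $\overline{\Delta}_H = 0$ cannot distinguish an ideal sheaf of points from an ideal sheaf of a curve, so that the ``crux'' must be a wall-crossing argument ruling out $I_C(D)$ via the destabilizing quotient $\OO_X(D) \onto \OO_C(D)$. But $\overline{\Delta}_H$ involves $H \cdot \ch_2$, not just $\ch_{\leq 1}$: for $E = I_Z(D)$ one has $\ch_2(E) = \tfrac{D^2}{2} - [Z_1]$ with $Z_1$ the one-dimensional part of $Z$, so
\[
\overline{\Delta}_H(E) = \left((H^2\cdot D)^2 - H^3\,(H\cdot D^2)\right) + 2\,H^3\,(H\cdot [Z_1]),
\]
a sum of two nonnegative terms (the first by the Hodge index theorem, the second because $H$ is ample and $[Z_1]$ is effective). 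Hence $\overline{\Delta}_H(E)=0$ \emph{directly} forces $D \equiv \lambda H$ and $Z_1 = \emptyset$; this is exactly how the paper argues (stability for $\alpha \gg 0$ via Theorem \ref{thm:Bertram}(7)--(8) gives the sheaf-theoretic description, then Hodge index plus the effectivity of the curve class). Your proposed ``crux'' is therefore both unnecessary and, as you admit, not carried out, so the proposal as written does not close the argument in the rank-one case.

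Two smaller gaps: the reduction of rank $-1$ to rank $1$ is not as harmless as stated --- $E[1]$ is not an object of the same heart, and the derived dual of Proposition \ref{prop:tilt_derived_dual} sends $\beta \mapsto -\beta$, so the window $[0,1)$ becomes $(-1,0]$, not $[0,1)$; the paper instead treats rank $-1$ directly, showing $H^{-1}(E)$ is a line bundle, $H^0(E)=0$, and $E \cong \OO_X[1]$. Finally, ``there is nothing to prove'' for the last sentence of the lemma is too quick: for a rank $\pm 1$ object not excluded by Theorem \ref{thm:bogomolov_extension} you still need the quantitative bound, which follows because $\overline{\Delta}_H$ is a nonnegative integer for rank $\pm 1$ classes (so $\overline{\Delta}_H \neq 0$ implies $\overline{\Delta}_H \geq 1 \geq (H^3)^2 \cdot \min\{ \tfrac{1}{(H^3)^2}, \tfrac{3}{2 i_X H^3}\}$); some such remark is needed to pass from ``$\overline{\Delta}_H = 0$ only for the excluded objects'' to the stated inequality.
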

\begin{proof}
The Hodge Index Theorem implies
\[
H \cdot (\ch_1(E)^2 - 2 \ch_0(E) \ch_2(E)) \leq \frac{\overline{\Delta}_H(E)}{H^3} = 0
\]
with equality if and only if $\ch_1(E)$ is numerically equivalent to a multiple of $H$. Part (7) of the Structure Theorem for Tilt Stability shows that $E$ is stable for $\alpha \gg 0$, and therefore, part (8) applies. In any of the two cases, $E$ satisfies the classical Bogomolov inequality and we get $H \cdot (\ch_1(E)^2 - 2 \ch_2(E)) = 0$. In particular, $\ch_1(E)$ is numerically equivalent to a multiple of $H$.

If $\ch_0(E) = -1$, then $H^{-1}(E)$ is reflexive of rank one, i.e., a line bundle. Since line bundles have no extensions with skyscraper sheaves, we must have $H^0(E) = 0$. The hypotheses on $\beta_{\pm}(E)$ directly imply $E \cong \OO_X[1]$.

Assume $\ch_0(E) = 1$. Then $E \otimes \OO(-\ch_1(E))$ is an ideal sheaf of a subscheme of dimension smaller than or equal to one. Its second Chern character equals
\[
\frac{\ch_1(E)^2}{2} - \ch_2(E).
\]
Since $H$ is ample and it intersects this curve class as zero, we must have that $E \otimes \OO(-\ch_1(E))$ is an ideal sheaf of a finite number of points.
Again, the hypotheses on $\beta_{\pm}(E)$ directly imply the claim.
\hfill $\Box$
\end{proof}

\begin{lem}
\label{lem:derivative_hyperbola}
Let $E \in \Db(X)$ such that $\ch_0(E) \neq 0$. Then the derivative of $\alpha$ by $\beta$ along the hyperbola $\nu_{\alpha, \beta}(E) = 0$ is given by
\[
\frac{d\alpha}{d\beta} = \frac{\beta - \mu(E)}{\alpha}.
\]
\end{lem}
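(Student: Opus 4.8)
The plan is to write out the defining equation of the hyperbola $\nu_{\alpha,\beta}(E)=0$ explicitly in terms of the Chern numbers of $E$, and then differentiate it implicitly. Being on the hyperbola presupposes $H^2\cdot\ch_1^\beta(E)\neq 0$ (otherwise $\nu_{\alpha,\beta}(E)=+\infty$), so the condition $\nu_{\alpha,\beta}(E)=0$ is equivalent to the vanishing of the numerator, i.e.
\[
H\cdot\ch_2^\beta(E)-\frac{\alpha^2}{2}\,H^3\cdot\ch_0^\beta(E)=0.
\]
Expanding the twisted Chern characters via the formulas from Section \ref{sec:tilt} and using $\ch_0^\beta=\ch_0$, this becomes
\[
H\cdot\ch_2(E)-\beta\,H^2\cdot\ch_1(E)+\frac{\beta^2}{2}\,H^3\cdot\ch_0(E)-\frac{\alpha^2}{2}\,H^3\cdot\ch_0(E)=0.
\]

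Next I would differentiate this relation with respect to $\beta$, regarding $\alpha$ as a function of $\beta$ along the curve. The terms not involving $\alpha$ or $\beta$ disappear, leaving
\[
-\,H^2\cdot\ch_1(E)+\beta\,H^3\cdot\ch_0(E)-\alpha\,\frac{d\alpha}{d\beta}\,H^3\cdot\ch_0(E)=0.
\]
Since $\ch_0(E)\neq 0$ and $H$ is ample, $H^3\cdot\ch_0(E)\neq 0$, so one may divide through by it; recalling $\mu(E)=\frac{H^2\cdot\ch_1(E)}{H^3\cdot\ch_0(E)}$, this rearranges to $\alpha\,\frac{d\alpha}{d\beta}=\beta-\mu(E)$, which is the asserted formula.

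There is essentially no obstacle here; the only points deserving a word of care are that on the hyperbola $\alpha>0$, so the division by $\alpha$ is legitimate, and that we are implicitly parametrizing a smooth branch of the curve near the point in question. One could alternatively complete the square to rewrite the equation of the hyperbola as $(\beta-\mu(E))^2-\alpha^2=\overline{\Delta}_H(E)/(H^3\cdot\ch_0(E))^2$ and differentiate that instead; this makes the computation even shorter and, as a byproduct, recovers the fact that the two solutions of $\nu_{0,\beta}(E)=0$ are exactly $\beta_\pm(E)$.
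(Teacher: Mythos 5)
Your proof is correct and is essentially the paper's own argument: both write the condition $\nu_{\alpha,\beta}(E)=0$ as the vanishing of the (expanded) numerator and differentiate it implicitly with respect to $\beta$, then divide by $H^3\cdot\ch_0(E)$ and by $\alpha$. The extra remarks about legitimacy of the division and the completed-square form are fine but add nothing beyond the paper's computation.
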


\begin{proof}
Basic calculus shows
\begin{align*}
0 &= \frac{d}{d \beta} \left( \frac{H^3 \cdot \ch_0(E)}{2} \alpha^2 - \frac{H^3 \cdot \ch_0(E)}{2} \beta^2 + H^2 \cdot \ch_1(E) \beta - H \cdot \ch_2(E) \right) \\
&= H^3 \cdot \ch_0(E) \alpha \frac{d\alpha}{d\beta} -  H^3 \cdot \ch_0(E) \beta + H^2 \cdot \ch_1(E). \qedhere
\end{align*}
\vspace{-1.25cm}

\hfill $\Box$

\vspace{0.1cm}
\end{proof}

\begin{lem}
\label{lem:slope_inequality}
Let $E$ be a tilt semistable object with $\ch_0(E) > 0$. Then there is a stable factor $F$ in the Jordan-H\"older filtration of $E$ satisfying $\mu(F) \leq \mu(E)$ and $\ch_0(F) > 0$. If instead $\ch_0(E) < 0$, then we get $\mu(F) \geq \mu(E)$ and $\ch_0(F) < 0$.

In particular, let $E$ be a tilt stable object with $\ch_0(E) > 0$ that destabilizes at a semicircular wall $W$. Then there is a stable factor $F$ in the Jordan-H\"older filtration of $E$ at $W$ satisfying $\mu(F) < \mu(E)$ and $\ch_0(F) > 0$. If instead $\ch_0(E) < 0$, then we get $\mu(F) > \mu(E)$ and $\ch_0(F) < 0$.
\end{lem}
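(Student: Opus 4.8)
The plan is to exploit the fact that the tilt slope $\nu_{\alpha,\beta}$ is a weak stability function on $\Coh^\beta(X)$ whose "denominator" $H^2\cdot\ch_1^\beta$ is nonnegative and vanishes only on objects of numerical dimension $0$, so that $\nu_{\alpha,\beta}$ is a genuine stability function on the quotient; combined with the fact that the classical slope $\mu$ is the relevant limit as $\alpha\to\infty$ along a vertical line, this forces the existence of a Jordan–Hölder factor with the stated properties. First I would consider the Jordan–Hölder filtration of $E$ (as a $\nu_{\alpha,\beta}$-semistable object, for the fixed point $(\alpha,\beta)$ under consideration) into stable factors $F_1,\dots,F_n$, all of the same tilt slope $\nu_{\alpha,\beta}(F_i)=\nu_{\alpha,\beta}(E)$. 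Since $\ch_0$ is additive, $\sum_i\ch_0(F_i)=\ch_0(E)>0$, so at least one factor $F=F_j$ has $\ch_0(F)>0$. The point is then to argue that among the factors with positive rank, one has $\mu(F)\le\mu(E)$.

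The key step is a weighted-average argument. Write $r_i=H^3\cdot\ch_0(F_i)$, $d_i=H^2\cdot\ch_1(F_i)$, $s_i=H\cdot\ch_2^\beta(F_i)$. The condition that all the $F_i$ share the same value of $\nu_{\alpha,\beta}$ means that the points $(H^2\cdot\ch_1^\beta(F_i), H\cdot\ch_2^\beta(F_i)-\tfrac{\alpha^2}{2}r_i)$ in the plane are all collinear with the origin along the same ray; equivalently there is a common slope $\nu$ with $H\cdot\ch_2^\beta(F_i)-\tfrac{\alpha^2}{2}r_i=\nu\cdot(d_i-\beta r_i)$ for every $i$. Summing over $i$ recovers the same identity for $E$. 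Now I would observe that $\mu(F_i)=\mu(E)$ for all $i$ is impossible unless every $F_i$ already has $\mu(F_i)=\mu(E)$: if instead the $\mu(F_i)$ are not all equal, then since their $(r_i,d_i)$-weighted average (with weights $r_i$, summing to $\ch_0(E)>0$; note the factors with $r_i\le 0$ must be handled separately) equals $\mu(E)$, some positive-rank factor lies strictly below or at $\mu(E)$ — and more carefully, one shows a factor of positive rank with $\mu\le\mu(E)$ must exist because otherwise every positive-rank factor has $\mu>\mu(E)$, and then the negative- or zero-rank factors would have to compensate, which is incompatible with $H^2\cdot\ch_1^\beta$ being nonnegative on $\Coh^\beta(X)$ at the relevant $\beta$. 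The symmetric statement for $\ch_0(E)<0$ follows by the same computation with inequalities reversed, or by applying the derived-dual functor of Proposition 2.8.

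For the second ("in particular") part: suppose $E$ is tilt \emph{stable} at a generic point but gets destabilized at a semicircular wall $W$, and take the Jordan–Hölder factors $F_1,\dots,F_n$ of $E$ \emph{on} $W$. The first part already gives a positive-rank factor $F$ with $\mu(F)\le\mu(E)$. To upgrade $\le$ to $<$, I would use that if $\mu(F)=\mu(E)$ for such a factor, then $F$ and $E$ have proportional $(\ch_0,\ch_1)$ after restricting to $H$; combined with $\nu_{\alpha,\beta}(F)=\nu_{\alpha,\beta}(E)$ on all of $W$ (a semicircular, hence non-vertical, wall) this forces $H\cdot\ch_{\le 2}(F)$ and $H\cdot\ch_{\le 2}(E)$ to be proportional, so by part (2) of the Structure Theorem (Theorem 2.5) $F$ and $E$ could only share a wall that is vertical — contradicting that $W$ is semicircular, unless $F$ destabilizes nothing, i.e.\ $E$ is not actually destabilized at $W$. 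Hence a strict inequality $\mu(F)<\mu(E)$ must hold for some positive-rank factor. I expect the main obstacle to be the careful bookkeeping around the zero- and negative-rank factors in the weighted average, and in particular ruling out the degenerate case where all positive-rank factors have $\mu$ strictly above $\mu(E)$; this is where one really has to use the sign constraints on $H^2\cdot\ch_1^\beta$ and $H\cdot\ch_2^\beta$ coming from the definition of $\Coh^\beta(X)$ together with the wall being semicircular (so $\ch_0^\beta\neq 0$ has a definite sign there).
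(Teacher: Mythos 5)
Your proposal is correct, and it takes a mildly different route from the paper's. The paper proves the first statement by induction on the length of a Jordan--H\"older filtration: it picks one stable subobject $F\subset E$ with quotient $G$, distinguishes three cases according to how $\ch_0(F),\ch_0(G)$ compare with $\ch_0(E)$, and uses $0\le H^2\cdot\ch_1^\beta(F)\le H^2\cdot\ch_1^\beta(E)$, respectively the fact that the vectors $(-H^2\cdot\ch_1,H^3\cdot\ch_0)$ of $F$ and $G$ lie in the upper half plane and add up to that of $E$. You instead treat all stable factors $F_1,\dots,F_n$ simultaneously by an averaging argument, and the bookkeeping you flagged does close: each $F_i$ lies in $\Coh^\beta(X)$, so $H^2\cdot\ch_1^\beta(F_i)\ge 0$; since $E$ has positive rank and lies in the heart, $\mu(E)\ge\beta$ and $\sum_{\ch_0(F_i)>0}H^3\cdot\ch_0(F_i)\ge H^3\cdot\ch_0(E)$; hence if every positive-rank factor had $\mu(F_i)>\mu(E)$ one would get $\sum_i H^2\cdot\ch_1^\beta(F_i)>(\mu(E)-\beta)\,H^3\cdot\ch_0(E)=\sum_i H^2\cdot\ch_1^\beta(F_i)$, a contradiction. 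Both arguments rest on the same positivity of $H^2\cdot\ch_1^\beta$ on the heart; yours buys economy (no induction, no sub/quotient case analysis), while the paper's two-term version is the one that also yields the statement for a single stable subobject directly. For the ``in particular'' part your conclusion matches the paper's one-line remark, but the reference to part (2) of the Structure Theorem is not quite the right tool: the cleaner way to finish is that $\mu(F)=\mu(E)$ with both ranks positive and $\nu_{\alpha,\beta}(F)=\nu_{\alpha,\beta}(E)$ at a point of $W$ forces $H\cdot\ch_{\le 2}(F)$ to be proportional to $H\cdot\ch_{\le 2}(E)$, so $\nu(F)=\nu(E)$ identically, contradicting that $E$ is stable off $W$; and if the truncated classes had equal slope without being proportional, the numerical wall they define would be vertical, again contradicting that $W$ is a semicircle.
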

\begin{proof}
We will only do the case $\ch_0(E) > 0$, and leave $\ch_0(E) < 0$ to the reader. The proof is by induction on the number of stable factors $n$ in a Jordan-H\"older filtration of $E$. If $n = 1$, then we can simply choose $E = F$.

Assume the statement holds for some $n \in \N$ and that $E$ has a Jordan-H\"older filtration of length $n+1$. Let $F \into E$ be a stable subobject in such a Jordan-H\"older filtration of $E$, and let $G$ be the quotient $F/E$. Because of $\ch_0(E) > 0$, we can divide the argument into the following three cases.

\begin{itemize}
\item Assume that $\ch_0(F) \geq \ch_0(E) > 0$ holds. That means we have
\[
0 \leq H^2 \cdot \ch_1(F) - \beta H^3 \cdot \ch_0(F) \leq H^2 \cdot \ch_1(E) - \beta H^3 \cdot \ch_0(E).
\]
This implies
\[
\mu(F) - \beta \leq \frac{H^2 \cdot \ch_1(E) - \beta H^3 \cdot \ch_0(E)}{H^3 \cdot \ch_0(F)} \leq \mu(E) - \beta.
\]
\item If $\ch_0(G) \geq \ch_0(E) > 0$ holds, then we get $\mu(G) \leq \mu(E)$, but $G$ is only semistable. By the inductive hypothesis $G$ has a stable factor with the desired properties.
\item Assume that both $\ch_0(E) > \ch_0(F) > 0$ and $\ch_0(E) > \ch_0(G) > 0$. The vectors $v_1 = (-H^2 \cdot \ch_1(F), H^3 \cdot \ch_0(F))$ and $v_2 =  (-H^2 \cdot \ch_1(G), H^3 \cdot \ch_0(G))$ lie in the upper half plane and add up to $v = (-H^2 \cdot \ch_1(E), H^3 \cdot \ch_0(E))$. It follows that exactly one of the slopes $\mu(F)$ and $\mu(G)$ is larger than or equal to $\mu(E)$. If $\mu(F) \leq \mu(E)$, we have proved the statement. If $\mu(G) \leq \mu(E)$, then we can use the inductive hypothesis on $G$ to obtain an object with the desired properties.
\end{itemize}

For the last part observe that since the wall is not vertical, we get $\mu(F) \neq \mu(E)$ for all stable factors $F$.
\hfill $\Box$
\end{proof}

\begin{lem}
\label{lem:stable_in_quadrant}
Assume there is an object $E$ contradicting Theorem \ref{thm:bogomolov_extension} such that $\overline{\Delta}_H(E) \geq 0$ is minimal among all such objects. Then $E$ can only be destabilized at the unique numerical vertical wall.
\begin{proof}
By Lemma \ref{lem:li_rank_one}, such an object $E$ must satisfy $|\ch_0(E)| \geq 2$. We will give the proof in the case $\ch_0(E) \geq 2$. The negative rank case is almost identical. \footnote{Mirror Figure \ref{fig:reduction_to_minimal} at the line $\beta = \mu(E)$ in that case.}

Assume $E$ is destabilized at some semicircular wall $W$. We will show that there is a stable factor $F$ of $E$ at $W$ that also contradicts Theorem \ref{thm:bogomolov_extension}. Then, part (6) of the Structure Theorem for Walls in Tilt Stability (Theorem \ref{thm:Bertram}) creates a contradiction to the minimality of $\overline{\Delta}_H(E)$.

\begin{figure}[ht]
        \centering
        \begin{picture}(140,155)(0,0)
          \put(-120,0){\includegraphics[width=0.8\textwidth]{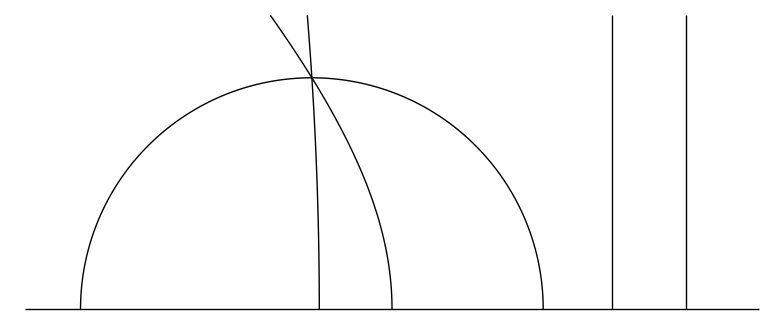}}
          \put(179, -6) {$\mu(E)$}
          \put(144, -6) {$\mu(F)$}
          \put(48, -6) {$\beta_{-}(F)$}
          \put(8, -6) {$\beta_{-}(E)$}
        \end{picture}
        \caption{Visualization of $\beta_{-}(E) < \beta_{-}(F) \leq \mu(F) < \mu(E)$. (Figure created with \cite{sage}.)}
        \label{fig:reduction_to_minimal}
\end{figure}

By Lemma \ref{lem:slope_inequality} there is a stable factor $F$ of $E$ such that $\mu(F) < \mu(E)$ and $\ch_0(F) > 0$. Since the ranks of both $E$ and $F$ are positive, the wall $W$ is to the left of the vertical numerical walls of both $E$ and $F$. In particular, the hyperbolas $\nu_{\alpha, \beta}(E) = 0$ and $\nu_{\alpha, \beta}(F) = 0$ are both decreasing. Let $(\alpha_0, \beta_0)$ be the top point on the semicircle $W$. By Lemma \ref{lem:derivative_hyperbola} the derivative of $\alpha$ by $\beta$ at $(\alpha_0, \beta_0)$ along the hyperbola $\nu_{\alpha, \beta}(E) = 0$ is smaller than along the hyperbola $\nu_{\alpha, \beta}(F) = 0$ (see Figure \ref{fig:reduction_to_minimal}). This proves $\beta_{-}(E) < \beta_{-}(F) \leq \mu(F) < \mu(E)$. Therefore,
\[
\frac{\overline{\Delta}_H(F)}{(H^3 \cdot \ch_0(F))^2} < \frac{\overline{\Delta}_H(E)}{(H^3 \cdot \ch_0(E))^2}
\]
holds. From here we see that $F$ also contradicts the theorem.
\hfill $\Box$
\end{proof}
\end{lem}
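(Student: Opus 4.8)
The plan is a minimal-counterexample / wall-crossing argument. Suppose $E$ is a counterexample to Theorem~\ref{thm:bogomolov_extension} with $\overline{\Delta}_H(E)$ as small as possible (well-defined, as $\overline{\Delta}_H \geq 0$ by Theorem~\ref{thm:bogomolov}), and assume toward a contradiction that $E$ is destabilized at a semicircular wall $W$. The idea is to pick out a Jordan--H\"older factor $F$ of $E$ along $W$ that is again a counterexample but has $\overline{\Delta}_H(F) < \overline{\Delta}_H(E)$, contradicting minimality via Theorem~\ref{thm:Bertram}(6). First, by Lemma~\ref{lem:li_rank_one} no counterexample has rank $\pm 1$, so $|\ch_0(E)| \geq 2$; and I may assume $\ch_0(E) \geq 2$, the negative-rank case being symmetric (mirror the picture at the line $\beta = \mu(E)$).

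Let $(\alpha_0, \beta_0)$ be the top point of $W$. By Lemma~\ref{lem:slope_inequality} there is a stable Jordan--H\"older factor $F$ of $E$ at $W$ with $\ch_0(F) > 0$ and $\mu(F) < \mu(E)$. Since both ranks are positive, $W$ lies to the left of both vertical numerical walls $\beta = \mu(E)$ and $\beta = \mu(F)$, so the hyperbolas $\nu_{\alpha,\beta}(E) = 0$ and $\nu_{\alpha,\beta}(F) = 0$ both pass through $(\alpha_0,\beta_0)$ and are decreasing there. On this branch they are the graphs $\beta = \mu(E) - \sqrt{\alpha^2 + c_E}$ and $\beta = \mu(F) - \sqrt{\alpha^2 + c_F}$, where $c_E = \overline{\Delta}_H(E)/(H^3 \cdot \ch_0(E))^2$ and $c_F = \overline{\Delta}_H(F)/(H^3 \cdot \ch_0(F))^2$; by Lemma~\ref{lem:derivative_hyperbola} the slope of the first at $(\alpha_0,\beta_0)$ is $\tfrac{\beta_0 - \mu(E)}{\alpha_0}$, that of the second is $\tfrac{\beta_0 - \mu(F)}{\alpha_0}$, and since $\mu(F) < \mu(E)$ the $E$-curve is the steeper. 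Tracing both curves down to $\alpha = 0$ then yields
\[
0 \leq \beta_-(E) < \beta_-(F) \leq \mu(F) < \mu(E), \qquad c_F < c_E, \qquad \beta_+(F) = \mu(F) + \sqrt{c_F} < \mu(E) + \sqrt{c_E} = \beta_+(E) < 1.
\]

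It remains to see that $F$ is a counterexample of smaller discriminant. It is tilt stable with $\ch_0(F) \neq 0$ and $\beta_\pm(F) \in [0,1)$; it violates the bound since $c_F < c_E < \min\{\tfrac{1}{(H^3)^2}, \tfrac{3}{2 i_X H^3}\}$; and it is neither a shift of $\OO_X$ nor an ideal sheaf of points, as those objects have $\overline{\beta} = 0$ while $\beta_-(F) = \overline{\beta}(F) > \beta_-(E) \geq 0$. Finally Theorem~\ref{thm:Bertram}(6), applied to the destabilizing sequence, gives $\overline{\Delta}_H(F) \leq \overline{\Delta}_H(E)$, which a short look at its equality clause sharpens to a strict inequality when $\overline{\Delta}_H(E) > 0$ (equality would force the complementary factor to have $H \cdot \ch_{\leq 2} = 0$, incompatible with $\overline{\Delta}_H(F) > 0$ and the ranks); the case $\overline{\Delta}_H(E) = 0$ needs nothing, being already covered by Theorem~\ref{thm:Bertram}(7). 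This is the contradiction.

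I expect the crux to be the geometry in the middle step: establishing that the destabilizing semicircular wall really sits to the left of both vertical walls (so each $\tfrac{d\alpha}{d\beta}$ is negative and the hyperbolas are genuinely decreasing at $(\alpha_0,\beta_0)$), and that the one-point slope comparison from Lemma~\ref{lem:derivative_hyperbola} propagates to $\beta_-(E) < \beta_-(F)$ all the way to the $\beta$-axis. The remaining verifications are routine manipulations of Chern-character data against the structure theorems.
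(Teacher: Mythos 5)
Your proof is correct and takes essentially the same route as the paper: minimal counterexample, Lemma~\ref{lem:slope_inequality} to extract a positive-rank stable factor $F$ with $\mu(F)<\mu(E)$, the slope comparison of Lemma~\ref{lem:derivative_hyperbola} at the top point of the wall (propagated down to $\alpha=0$) to get $\beta_-(E)<\beta_-(F)\leq\mu(F)<\mu(E)$ and hence $\overline{\Delta}_H(F)/(H^3\cdot\ch_0(F))^2<\overline{\Delta}_H(E)/(H^3\cdot\ch_0(E))^2$, and Theorem~\ref{thm:Bertram}(6) against minimality. The additional verifications you spell out (that $\beta_+(F)<1$, that $F$ is not a shift of $\OO_X$ or an ideal sheaf of points, and the strictness in the equality clause of (6)) are details the paper leaves implicit, so nothing is missing.
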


\begin{lem}
\label{lem:stable_everywhere_beta_0}
Assume that $E$ is a tilt stable object for some $\alpha > 0$ and $\beta \in \R$ with $\overline{\Delta}_H(E) = 0$, $\overline{\beta}(E) = 0$, and $|\ch_0(E)| \geq 2$. Then there is an object $E'$ with
$\overline{\Delta}_H(E') = 0$, $\overline{\beta}(E') = 0$, and $\ch_0(E') \geq 2$ such that for all $\alpha > 0$ and $\beta \in \R$ either $E'$ or $E'[1]$ is tilt stable.
\end{lem}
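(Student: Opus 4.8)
The strategy is to reduce, along the $\mu$-Jordan--H\"older filtration, to a $\mu$-stable sheaf, then replace it by its reflexive hull (which costs nothing, since $\overline{\Delta}_H$ vanishes), and finally to check tilt stability throughout the upper half plane by hand; the only delicate point will be the vertical wall $\beta = 0$.

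First I would record the numerical consequences of the hypotheses. Since $\overline{\Delta}_H(E) = 0$ and $\ch_0(E) \neq 0$, the definition of $\overline{\beta}$ gives $\overline{\beta}(E) = \mu(E)$, hence $\mu(E) = 0$ and therefore also $H \cdot \ch_2(E) = 0$. By Theorem~\ref{thm:Bertram}, parts~(7) and~(4), $E$ is destabilized only at the numerical vertical wall $\beta = \mu(E) = 0$; in particular $E$ is tilt stable for $\alpha \gg 0$, so Theorem~\ref{thm:Bertram}(8) applies. If $\ch_0(E) > 0$, then $E$ is already a $\mu$-semistable sheaf. If $\ch_0(E) < 0$, then $F := H^{-1}(E)$ is a reflexive $\mu$-semistable sheaf of rank $-\ch_0(E) \geq 2$ and $H^0(E)$ is supported in dimension $\le 1$; writing $\ch(E) = \ch(H^0(E)) - \ch(F)$ and using the Bogomolov inequality for $F$ one gets $\overline{\Delta}_H(E) = \overline{\Delta}_H(F) + 2\ch_0(F)\,H^3\cdot(H\cdot\ch_2(H^0(E)))$ with both summands non-negative, so $\overline{\Delta}_H(F) = 0$ and $\mu(F) = \mu(E) = 0$. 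In either case we are left with a $\mu$-semistable torsion-free sheaf $F$ with $\ch_0(F) \geq 2$, $\mu(F) = 0$, $\overline{\Delta}_H(F) = 0$.

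Next I would take the $\mu$-stable factors $F_1, \dots, F_m$ of a $\mu$-Jordan--H\"older filtration of $F$; each has $\mu(F_i) = 0$, and since the Chern character is additive in extensions a short computation gives $\overline{\Delta}_H(F_i) = 0$ for all $i$, while $\sum_i \ch_0(F_i) = \ch_0(F) \geq 2$. If every $F_i$ had rank $1$, then --- as in Lemma~\ref{lem:li_rank_one} (Hodge Index Theorem together with the classical Bogomolov inequality), and using that $\Pic(X)$ is torsion-free on a Fano threefold --- each $F_i$ would be the ideal sheaf $I_{Z_i}$ of finitely many points. Because $H^2 \cdot \ch_1(F) = 0 = H \cdot \ch_2(F)$, the class $H \cdot \ch_{\leq 2}(F)$ is proportional to $H \cdot \ch_{\leq 2}(I_{Z_1})$, and the same holds for $E$; since $\nu_{\alpha,\beta}$ factors through $H \cdot \ch_{\leq 2}$ we would get $\nu_{\alpha,\beta}(E) = \nu_{\alpha,\beta}(I_{Z_1})$ for all $(\alpha,\beta)$, and realising $I_{Z_1}$, or its shift $I_{Z_1}[1]$, as a proper subobject of $E$ in the relevant $\Coh^\beta$ would contradict tilt stability of $E$. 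Hence some $F_i$ has rank $\geq 2$. I then set $E' := F_i^{\vee\vee}$: since $\overline{\Delta}_H(F_i) = 0$ and $\mu(F_i) = 0$, the Bogomolov inequality forces $F_i^{\vee\vee}/F_i$ to be supported in dimension $0$, so $\ch_{\leq 2}(E') = \ch_{\leq 2}(F_i)$, and $E'$ is a reflexive $\mu$-stable sheaf with $\ch_0(E') \geq 2$, $\mu(E') = 0$, $\overline{\Delta}_H(E') = 0$, hence $\overline{\beta}(E') = 0$.

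Finally I would show that $E'$ or $E'[1]$ is tilt stable for all $\alpha > 0$, $\beta \in \R$. Being $\mu$-stable, $E'$ is tilt stable for $\alpha \gg 0$ (a standard consequence of the construction of tilt stability), so by Theorem~\ref{thm:Bertram}(7) it is destabilized only at $\beta = 0$; since $E' \in \TT_\beta$ for $\beta < 0$ and $E' \in \FF_\beta$ for $\beta > 0$, this yields that $E'$ is tilt stable for all $\alpha > 0$, $\beta < 0$ and $E'[1]$ is tilt stable for all $\alpha > 0$, $\beta > 0$. At $\beta = 0$ one has $\nu_{\alpha,0}(E'[1]) = +\infty$ because $H^2 \cdot \ch_1(E') = 0$, so every subobject $A \subset E'[1]$ in $\Coh^0(X) = \langle \FF_0[1], \TT_0 \rangle$ must have $H^2 \cdot \ch_1(A) = 0$; analysing such $A$ and using $\mu$-stability of $E'$, one finds $A$ can only be a sheaf $T$ of dimension $0$ sitting in a sequence $0 \to E' \to M \to T \to 0$ of sheaves with $M$ torsion-free. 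Then $M^{\vee\vee}$ contains $(E')^{\vee\vee} = E'$ with cokernel of dimension $0$, and two reflexive sheaves agreeing off a finite set coincide, so $M^{\vee\vee} = E'$, forcing $M = E'$ and $T = 0$. Thus $E'[1]$ has no proper nonzero subobject at $\beta = 0$ and is tilt stable there as well. I expect this last step to be the crux: for a merely torsion-free (rather than reflexive) $\mu$-stable sheaf the dimension-$0$ subobject cannot be excluded, and it is exactly the reflexive-hull replacement --- harmless because $\overline{\Delta}_H = 0$ leaves $\ch_{\leq 2}$ unchanged --- that makes $\beta = 0$ behave; verifying along the way that $F_i^{\vee\vee}$ is still $\mu$-stable (hence tilt stable for $\alpha \gg 0$) also requires a short argument.
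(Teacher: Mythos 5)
Your route is genuinely different from the paper's. The paper takes a stable quotient $G$ in a Jordan--H\"older filtration of $E[1]$ \emph{at the vertical wall}, so that stability at and near $\beta=0$ holds by construction, and only the rank $-1$ factor $G\cong\OO_X[1]$ requires an extra contradiction; you instead take slope-Jordan--H\"older factors of the underlying sheaf, pass to a reflexive hull, and check tilt stability of $E'$ resp.\ $E'[1]$ by hand. Most of your argument is sound: the reduction to a $\mu$-semistable sheaf with $\mu=0$, $\overline{\Delta}_H=0$; the exclusion of the all-rank-one case (your destabilizing subobject has truncated class proportional to that of $E$, so it contradicts stability of $E$ at the one point where stability is assumed -- note this needs no rank-one hypothesis at all); and, most importantly, the $\beta=0$ analysis is correct: for a proper subobject $A\subset E'[1]$ in $\Coh^0(X)$ one gets $H^{-1}(A)=0$ by $\mu$-stability (otherwise $H^2\cdot\ch_1(H^{-1}(A))<0$ while $H^2\cdot\ch_1(H^0(A))\geq 0$), so $A$ is a torsion sheaf of dimension $\leq 1$ (not necessarily $0$, but your reflexivity argument works verbatim) sitting in $0\to E'\to M\to A\to 0$ with $M\in\FF_0$ torsion free, and reflexivity of $E'$ forces $A=0$. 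This is in effect a hands-on proof of the characterization of stable objects at the vertical wall that the paper imports implicitly through its JH factors.

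The genuine gap is the region $\beta>0$. You justify that $E'[1]$ is tilt stable for all $\alpha>0$, $\beta>0$ only by the remark that $E'\in\FF_\beta$ there; but that only places $E'[1]$ in the heart $\Coh^\beta(X)$, it says nothing about stability. Your wall-and-chamber argument via Theorem \ref{thm:Bertram}(7) needs a seed point of stability with $\beta>0$, exactly as you supplied one for $\beta<0$ via the large-volume limit, and Theorem \ref{thm:Bertram}(8) only gives the implication ``tilt stable for $\alpha\gg0$ $\Rightarrow$ $H^{-1}$ reflexive slope semistable'', not the converse you need (``the shift of a reflexive $\mu$-stable sheaf is tilt stable for $\alpha\gg0$, $\beta>\mu$''). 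That converse is true, but it requires an argument: for instance dualize via Proposition \ref{prop:tilt_derived_dual} applied to $(E')^{*}$ (again reflexive, $\mu$-stable, $\mu=0$, $\overline{\Delta}_H=0$, hence stable for all $\beta<0$ by your first half) and transfer stability back, or propagate your $\beta=0$ simplicity across the wall; neither is in your write-up, and simplicity in $\Coh^0(X)$ does not formally transfer to $\Coh^\beta(X)$ for $\beta>0$ since the heart changes. The paper's choice of $E'$ as a JH factor at the wall is precisely what lets it bypass this step. Two smaller points you should also make explicit: that $F_i^{\vee\vee}$ is still $\mu$-stable (you flag this yourself), and the ``standard'' fact that a positive-rank $\mu$-stable sheaf is tilt stable for $\alpha\gg0$, which is not among the results quoted in Section \ref{sec:tilt} and deserves a citation or a short proof.
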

\begin{proof}
By hypothesis we know $H \cdot \ch_{\leq 2}(E) = (r,0,0)$, for $r = \ch_0(E)$. Theorem \ref{prop:tilt_derived_dual} implies that we can use the derived dual to reduce to $r \geq 2$. By Theorem \ref{thm:Bertram} (7), $E$ can only destabilize at the vertical wall $\beta = 0$. Moreover, by part (8) of the same Theorem, $E$ is a torsion free slope semistable sheaf.

Let $E[1] \onto G$ be a stable quotient in a Jordan-H\"older filtration of $E[1]$ at the numerical vertical wall. By Theorem \ref{thm:Bertram} (6), we have $H \cdot \ch_{\leq 2}(G) =(r',0,0)$, for $r' = \ch_0(G) \leq 0$.
Since $E$ is a sheaf, $G$ cannot be a torsion sheaf, i.e., $r' \neq 0$.
If $r' \leq -2$, then we can choose $E' = G[-1]$.
If $r' = -1$, then by Lemma \ref{lem:li_rank_one}, $G \cong \OO_X[1]$.
Consider the map $E\to \OO_X$.
Since $\overline{\Delta}_H(E)=0$, we must have that both kernel and image are also slope semistable sheaves with $\overline{\Delta}_H=0$ and slope zero. In particular, they all have the same tilt slope independent of $\alpha$ and $\beta$. Since $r\geq2$, both have also non-zero ranks, and therefore, $E$ is strictly tilt semistable, a contradiction.
\hfill $\Box$
\end{proof}

\begin{lem}
\label{lem:stable_everywhere}
Assume there is an object $E$ contradicting Theorem \ref{thm:bogomolov_extension}. Then $E$ can be chosen such that for all $\alpha > 0$ and $\beta \in \R$ either $E$ or $E[1]$ is tilt stable and $\ch_0(E) \geq 2$.
\end{lem}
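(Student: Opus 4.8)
The plan is a minimal-counterexample argument combining Lemmas~\ref{lem:li_rank_one}, \ref{lem:stable_in_quadrant}, and~\ref{lem:stable_everywhere_beta_0}. Among all objects contradicting Theorem~\ref{thm:bogomolov_extension}, pick $E$ with $\overline{\Delta}_H(E)$ minimal; then $|\ch_0(E)|\geq 2$ by Lemma~\ref{lem:li_rank_one}. Using the derived dual of Proposition~\ref{prop:tilt_derived_dual} together with twists by line bundles $\OO(aH)$ — exactly as in the proof of Lemma~\ref{lem:reduction_alpha_0}, and noting that neither operation alters $\overline{\Delta}_H$, the ratio $\overline{\Delta}_H/(H^3\cdot\ch_0)^2$, or the class of minimal counterexamples — we may assume $\ch_0(E)\geq 2$. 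By Lemma~\ref{lem:stable_in_quadrant}, such an $E$ can only be destabilized along the unique numerical vertical wall $\beta=\mu(E)$.

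If $E$ is not destabilized along that wall, then $E$ is tilt stable for all $\alpha>0$ and $\beta\in\R$ — Theorem~\ref{thm:Bertram}~(5) propagates stability from one point of the vertical line to the whole of it — and we are done. So assume $E$ is destabilized at the vertical wall, and let $F_1,\dots,F_n$ be the Jordan-H\"older factors of $E[1]$ there. Each $F_i$ is tilt stable at the vertical wall; those of nonzero rank satisfy $\mu(F_i)=\mu(E)$, hence $H^2\cdot\ch_1^{\mu(E)}(F_i)=0$ and $\overline{\Delta}_H(F_i)=-2\,(H^3\cdot\ch_0(F_i))\,(H\cdot\ch_2^{\mu(E)}(F_i))$, while the remaining ones are (shifts of) sheaves supported in dimension at most $1$; by Theorem~\ref{thm:Bertram}~(6) one also has $\sum_i\overline{\Delta}_H(F_i)\leq\overline{\Delta}_H(E)$.

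Suppose first that some nonzero-rank factor $F:=F_i$ is again a counterexample to Theorem~\ref{thm:bogomolov_extension}. By minimality it is again a minimal counterexample, and since it is tilt stable at the vertical wall it is in fact tilt stable for all $\alpha>0$ and $\beta\in\R$, by Theorem~\ref{thm:Bertram}~(5) and Lemma~\ref{lem:stable_in_quadrant}; after a derived dual we may assume $\ch_0(F)\geq 2$, and $F$ replaces $E$. Suppose instead that no nonzero-rank factor is a counterexample. For each such $F_i$ one then has $\overline{\Delta}_H(F_i)/(H^3\cdot\ch_0(F_i))^2\geq\min\{1/(H^3)^2,\,3/(2i_XH^3)\}$, both whenever Theorem~\ref{thm:bogomolov_extension} applies to $F_i$ and when $F_i$ is a shift of $\OO_X$ or an ideal of points (Lemma~\ref{lem:li_rank_one}). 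Plugging these bounds into the identity
\[
\frac{\overline{\Delta}_H(E)}{(H^3\cdot\ch_0(E))^2}=\sum_{\ch_0(F_i)\neq 0}\frac{\ch_0(F_i)}{\ch_0(E)}\cdot\frac{\overline{\Delta}_H(F_i)}{(H^3\cdot\ch_0(F_i))^2}\;-\;\frac{2\sum_{\ch_0(F_j)=0}H\cdot\ch_2^{\mu(E)}(F_j)}{H^3\cdot\ch_0(E)},
\]
and using the sign constraints on $\ch_0(F_i)$ and on $H\cdot\ch_2^{\mu(E)}(F_j)$ at the vertical wall (argued as in Lemmas~\ref{lem:slope_inequality} and~\ref{lem:stable_in_quadrant}), one is forced into the degenerate situation in which all nonzero-rank factors have rank $\pm 1$ and $\overline{\Delta}_H(E)=0$. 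In that case $E$ is a slope semistable sheaf of rank $\geq 2$, the presence of a rank $\pm1$ factor at the vertical wall forces $\overline{\beta}(E)=0$ via Lemma~\ref{lem:li_rank_one} and hence $H\cdot\ch_{\leq 2}(E)=(H^3\cdot\ch_0(E),0,0)$, and Lemma~\ref{lem:stable_everywhere_beta_0} produces the required object $E'$: it contradicts Theorem~\ref{thm:bogomolov_extension} since $\overline{\Delta}_H(E')=0$ and $\beta_\pm(E')=0\in[0,1)$, it is tilt stable up to shift for all $\alpha>0$ and $\beta\in\R$, and $\ch_0(E')\geq 2$.

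The step I expect to be the main obstacle is the last one: the numerical bookkeeping of $(H^3\cdot\ch_0,\,H^2\cdot\ch_1,\,H\cdot\ch_2)$ of the Jordan-H\"older factors at the vertical wall that shows, when none of them is itself a counterexample, that one necessarily lands in the degenerate situation governed by Lemma~\ref{lem:stable_everywhere_beta_0} — in effect, ruling out the possibility that $\overline{\Delta}_H(E)$ is ``spread out'' over several factors each individually escaping the conclusion of the theorem. Everything else is assembled from the structural results for tilt stability recalled in Sections~\ref{sec:tilt} and~\ref{sec:Bog}.
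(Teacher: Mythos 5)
Your skeleton is the same as the paper's (minimal counterexample, Lemma~\ref{lem:stable_in_quadrant} to confine destabilization to the vertical wall, analysis of the stable factors there, with Lemma~\ref{lem:stable_everywhere_beta_0} handling a degenerate case), but the step you yourself flag as ``the main obstacle'' is precisely the content of the proof, and your sketch of it is not the right resolution. The correct bookkeeping is short: at the wall $\beta=\mu(E)$ every nonzero-rank factor $F_i$ satisfies $\mu(F_i)=\mu(E)$, the rank-zero factors are torsion sheaves supported in dimension at most one (so their contribution $H\cdot\ch_2^{\beta}\geq 0$ has a definite sign), and the half-plane/convexity argument then produces \emph{at least one} $F_i$ with $\overline{\Delta}_H(F_i)/(H^3\cdot\ch_0(F_i))^2\leq \overline{\Delta}_H(E)/(H^3\cdot\ch_0(E))^2$. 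The key observation you are missing is that such a factor \emph{automatically} inherits the window hypothesis: since $\mu(F_i)=\mu(E)$ and its ratio is smaller, $[\beta_-(F_i),\beta_+(F_i)]\subseteq[\beta_-(E),\beta_+(E)]\subset[0,1)$. Hence it is a new (minimal, by Theorem~\ref{thm:Bertram}(6)) counterexample unless it is a shift of $\OO_X$ or of an ideal sheaf of points, which forces $\mu(F_i)=0$, i.e.\ $\mu(E)=0$; and $\mu(E)=0$ together with $\beta_-(E)\geq 0$ gives $\overline{\Delta}_H(E)=0$, $\overline{\beta}(E)=0$, the case of Lemma~\ref{lem:stable_everywhere_beta_0}. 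So your second branch (``no nonzero-rank factor is a counterexample'') simply cannot occur when $\mu(E)\neq 0$; there is no scenario with ``all nonzero-rank factors of rank $\pm1$ and $\overline{\Delta}_H(E)=0$'' to be extracted from sign constraints, and as written your displayed identity even has the wrong signs (with factors of $E[1]$ one has $\ch_0(F_i)<0$, so the weights should be $-\ch_0(F_i)/\ch_0(E)>0$ and the torsion term enters with a plus).

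Second, your rank-normalizations via the derived dual are incorrect as stated. Dualizing sends $\beta_\pm(E)$ to $-\beta_\mp(E)$, so after twisting by $\OO(H)$ one gets $\beta_+=1-\beta_-(E)$, which violates the strict inequality $\beta_+<1$ exactly when $\beta_-(E)=0$; thus the dual-and-twist does \emph{not} preserve ``the class of minimal counterexamples'' in general, and the same objection applies to ``after a derived dual we may assume $\ch_0(F)\geq 2$.'' Neither reduction is needed: the paper's lemmas are stated for both signs of the rank, and positivity comes for free at the end, because at the vertical wall every object of the heart has nonpositive rank, so the counterexample factor $F_i$ has $\ch_0(F_i)\leq -2$ (Lemma~\ref{lem:li_rank_one} rules out $\pm1$) and the correct operation is the shift $E=F_i[-1]$, which preserves $\beta_\pm$ and the counterexample property and has $\ch_0\geq 2$; stability of $E$ or $E[1]$ for all $(\alpha,\beta)$ then follows since $F_i$ is stable on its vertical wall and, being a minimal counterexample, admits no semicircular walls.
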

\begin{proof}
Assume $E'$ contradicts the theorem such that $\overline{\Delta}_H(E') \geq 0$ is minimal among all such objects. By Lemma \ref{lem:li_rank_one} the inequality $|\ch_0(E)| \geq 2$ is automatic. If $E'$ or $E'[1]$ is tilt stable for all $(\alpha, \beta)$, we can set $E = E'$ and be done. If $E'$ or $E'[-1]$ is tilt stable for all $(\alpha, \beta)$, we can set $E = E'[-1]$ and be done. Otherwise, we can use Lemma \ref{lem:stable_in_quadrant} to show that $E'$ destabilizes at the vertical wall. Let $F_1, \ldots, F_n$ be all the stable factors of $E'$ that satisfy $H \cdot \ch_{\leq 2}(F_i) \neq 0$ for $i = 1, \ldots, n$.

The shape of the wall implies either $\beta = \mu(F_i) = \mu(E')$ or $(H^3 \cdot \ch_0(F_i), H^2 \cdot \ch_1(F_i)) = (0,0)$. By definition of $\Coh^{\beta}(X)$ the rank of an object at the vertical wall can never be positive, and rank $0$ objects have to be torsion sheaves. Therefore, the vectors
$v_i = (-H \cdot \ch_2^{\beta}(F_i), -H^3 \cdot \ch_0^\beta(F_i))$ lie either in the upper half plane or on the negative real line. They also add up to $v = (-H \cdot \ch_2^{\beta}(E'), -H^3 \cdot \ch_0^\beta(E'))$. This is only possible if at least one of the slopes
\[
-\frac{H \cdot \ch_2^{\beta}(F_i)}{H^3 \cdot \ch_0^\beta(F_i)} = \frac{\overline{\Delta}_H(F_i)}{(H^3 \cdot \ch_0(F_i))^2}
\]
is smaller than or equal to the slope
\[
-\frac{H \cdot \ch_2^{\beta}(E')}{H^3 \cdot \ch_0^\beta(E')} = \frac{\overline{\Delta}_H(E')}{(H^3 \cdot \ch_0(E'))^2}.
\]
If $\mu(E) = 0$, then the condition $\beta_{-}(E) \geq 0$ implies $\overline{\Delta}_H(E) = 0$ and $\overline{\beta}(E) = 0$, and we are done by Lemma \ref{lem:stable_everywhere_beta_0}. In all other cases, $F_i$ cannot be $\OO_X[1]$. By part (6) of Theorem \ref{thm:Bertram}, $\overline{\Delta}_H(F_i)$ is minimal again. In particular, the object $E = F_i[-1]$ also contradicts the theorem.
\hfill $\Box$
\end{proof}

\begin{proof}[Proof of Theorem \ref{thm:bogomolov_extension}]
Assume the theorem does not hold, and choose $E$ contradicting it as in Lemma \ref{lem:stable_everywhere}. Then $\ch_0(E) \geq 2$. Stability of $E$ implies $\hom(E,E) = 1$. By assumption the inequality
\[
\overline{\beta}(E(-i_X H)[1]) = \beta_{+}(E) - i_X < \beta_{-}(E) = \overline{\beta}(E)
\]
holds. Due to the fact that $E(-i_X H)[1]$ and $E$ are both stable at $\overline{\beta}(E)$, we get $\ext^2(E,E) = \hom(E, E(-i_X H)[1]) = 0$. The Hirzebruch-Riemann-Roch Theorem together with the Hodge-Index Theorem implies
\begin{align*}
1 &\geq \chi(E,E) = \ch_0(E)^2 - \frac{i_X H \cdot (\ch_1(E)^2 - 2 \ch_0(E) \cdot \ch_2(E))}{2} \\
 &\geq  \ch_0(E)^2 - \frac{i_X \overline{\Delta}_H(E)}{2 H^3}.
\end{align*}
By rearranging the terms one gets the contradiction
\[
\frac{\overline{\Delta}_H(E)}{(H^3 \cdot \ch_0(E))^2} \geq \frac{2(\ch_0(E)^2 -1)}{i_X H^3 \cdot \ch_0(E)^2} \geq \frac{3}{2 i_X H^3}. \qedhere
\]
\vspace{-1.34cm}

\hfill $\Box$

\vspace{0.12cm}
\end{proof}

\section{The main theorem}
\label{sec:main_standard}

Let $X$ be a Fano threefold of index $i_X$ with polarization $H=-\frac{K_X}{i_X}$ and consider tilt-stability with respect to it. In this section we prove Theorem \ref{thm:main}. Chunyi Li already showed that the theorem holds with $\Gamma = 0$ for Picard rank $1$ in \cite{Li15:conjecture_fano_threefold}. Therefore, we only have to deal with the case where the Picard rank is at least two.

The idea of the proof is similar to Li's approach.
As reviewed in Lemma \ref{lem:reduction_alpha_0}, we will assume we have an object $E$ which is $\overline{\beta}$-stable with $0\leq \overline{\beta}(E) < 1$ and $\ch_0(E)\geq0$. Then stability is used to bound the Euler characteristic $\chi(E(-H))$. Finally, Theorem \ref{thm:bogomolov_extension} and the Hirzebruch-Riemann-Roch Theorem allow us to deduce the bound on the Chern character.
To simplify notation, we will often write $\overline{\beta}(E)=\overline{\beta}$.

\subsection{The index two case}
\label{subsec:index2}

First of all, recall that there are only three Fano threefolds with index two and large Picard number.

\begin{thm}[{\cite[Theorem 1]{Fuj82:delta_genus}}]
A Fano threefold $X$ of index two and Picard rank greater than or equal to two is given by either
\begin{enumerate}
\item the blow up of $\P^3$ in a point, or
\item $\P^1 \times \P^1 \times \P^1$, or
\item the projective bundle $\P(T_{\P^2})$, where $T_{\P^2}$ is the tangent bundle of $\P^2$.
\end{enumerate}
\end{thm}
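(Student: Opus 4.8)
This is Fujita's classification of del Pezzo threefolds of Picard rank at least two \cite{Fuj82:delta_genus}; here I only indicate the strategy. Write $-K_X = 2H$ with $H$ ample. The plan is to run the Minimal Model Program and exploit the parity constraint built into the equation $-K_X = 2H$: for every irreducible curve $C$ contracted by an extremal contraction, $-K_X \cdot C = 2\,(H \cdot C)$ is a positive \emph{even} integer.

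First I would pick a $K_X$-negative extremal ray (one exists since $\rho(X) \geq 2$) and let $\phi \colon X \to Y$ be its contraction. By Mori's structure theorem for extremal contractions of smooth projective threefolds, $\phi$ is either divisorial of one of the types $E_1,\dots,E_5$ or of fiber type; in particular there are no small contractions. Running through Mori's list of divisorial contractions, a short adjunction computation on a contracted curve $C$ gives $-K_X \cdot C = 1$ in every case except $E_2$, the blow-up of a smooth point $p$ of a smooth threefold $Y$ (where $-K_X \cdot C = 2$ on a line of the exceptional $\P^2$). Parity therefore forces $\phi$, if birational, to be of type $E_2$. Writing $H = \phi^{*}H_Y - aE$ and comparing with $-K_X = \phi^{*}(-K_Y) - 2E = 2H$ yields $a = 1$ and $-K_Y = 2H_Y$. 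One then has to check that $H_Y$ is ample, equivalently that $p$ lies on no curve of $H_Y$-degree one; granting this, $Y$ is again a del Pezzo threefold of Picard rank $\rho(X) - 1$, and induction (with the Iskovskikh list of Picard-rank-one del Pezzo threefolds as base case) shows the only possibility is $Y = \P^3$ with $H_Y = \OO_{\P^3}(2)$, so that $X$ is the blow-up of $\P^3$ at a point.

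If instead $\phi$ is of fiber type, then $\dim Y \neq 0$ (else $\rho(X) = 1$). When $\dim Y = 1$, the vanishing $H^i(\OO_X) = 0$ forces $Y \cong \P^1$, and a general fibre $F$ is a del Pezzo surface with $-K_F = 2\,(H|_F)$; the only del Pezzo surface whose anticanonical class is twice an ample class is $\P^1 \times \P^1$, so $\phi$ is a quadric surface bundle over $\P^1$. When $\dim Y = 2$, $\phi$ is a conic bundle with $-K_X \cdot f = 2$, hence $H \cdot f = 1$ on a fibre $f$; ampleness of $H$ then forbids reducible fibres, so $\phi$ is a $\P^1$-bundle $\P(\EE) \to Y$ over a smooth surface, and computing $-K_{\P(\EE)}$ shows $\det \EE = -K_Y$ up to twist, $\EE$ ample, hence $Y$ a del Pezzo surface. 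In both situations one analyses the (relative) anticanonical system: the index-two condition together with the bound on $\rho(X)$ rigidifies the data, leaving exactly $X \cong \P^1\times\P^1\times\P^1$ (from the quadric bundle, and from the $\P^1$-bundle over $\P^1\times\P^1$) and $X \cong \P(T_{\P^2})$ (the $\P^1$-bundle over $\P^2$ with $\EE = T_{\P^2}$). Together with the birational case this exhausts the list.

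The main obstacle is not the parity bookkeeping but the two places where one must recover a bundle or a point-configuration from numerical data: in the divisorial case, proving that $-K_Y = 2H_Y$ stays ample under the blow-down — this amounts to understanding the curves of minimal $H_Y$-degree (``lines'') on del Pezzo threefolds and showing they sweep out all of $Y$ in every Picard-rank-one case other than $(\P^3,\OO(2))$ — and in the fiber-type case, showing that $-K_X = 2H$ is rigid enough to pin down $\EE$ (to $T_{\P^2}$, respectively to force the product structure) rather than merely constraining its determinant. This study of lines and of the (relative) anticanonical system is the genuine content of Fujita's argument, and is where the classification of del Pezzo surfaces and of Picard-rank-one del Pezzo threefolds gets used. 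An alternative, essentially equivalent route is to first show $|H|$ is base-point free with general member a smooth del Pezzo \emph{surface} $S$ of degree $H^3$, use $\Pic(X) \hookrightarrow \Pic(S)$ to bound $H^3$ from above by $10 - \rho(X)$, and then reconstruct $X$ from $S$; the difficulty then migrates to base-point-freeness and to the reconstruction step.
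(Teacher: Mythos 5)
There is nothing in the paper to compare against: this statement is quoted verbatim from Fujita \cite{Fuj82:delta_genus} and the authors give no proof, using it only to enumerate the index-two cases treated in Section \ref{subsec:index2}. So the only question is whether your sketch would stand on its own. Its skeleton is sound and matches the standard modern route: the parity bookkeeping is correct (contracted curves in Mori's types $E_1$, $E_3$, $E_4$, $E_5$ have anticanonical degree $1$, so only $E_2$ survives; reducible or non-reduced conic-bundle fibres are excluded by $H \cdot f = 1$), and you correctly locate the two places where the real work sits, namely ampleness of $H_Y$ after the blow-down (i.e.\ that on every del Pezzo threefold other than $(\P^3, \OO(2))$ lines sweep out the whole variety, including the inductive cases where $\rho(Y) \geq 2$) and the rigidification of the fibre-type cases to $\EE \cong T_{\P^2}$, respectively to the product $\P^1 \times \P^1 \times \P^1$.

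But as submitted this is a plan, not a proof: those two steps are exactly the substantive content of Fujita's theorem, and you defer them rather than carry them out, so the argument has genuine, if honestly acknowledged, gaps. One smaller inaccuracy: since you take $\phi$ to be the contraction of a single extremal ray, the case $\dim Y = 1$ forces $\rho(X) = 2$, and it is in fact empty --- none of the three threefolds on the list admits a del Pezzo fibration as an extremal-ray contraction ($\P^1 \times \P^1 \times \P^1$ maps to $\P^1$ only by contracting a two-dimensional face of its Mori cone, and the $\rho = 2$ cases have one birational and one conic-bundle contraction, or two conic-bundle contractions). So $\P^1 \times \P^1 \times \P^1$ should be recovered from the conic-bundle case over $\P^1 \times \P^1$, not from a quadric bundle over $\P^1$ as your sketch asserts; completing the fibre-type analysis would surface this, but as written that branch of the case division is misstated.
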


In this section, we will prove Theorem \ref{thm:main} in the latter two cases.
For $\P^1\times\P^1\times\P^1$ we will actually prove the statement for all polarizations in Section \ref{sec:examples1} with a different method. The following lemma holds for all three cases.

\begin{lem}
\label{lem:vanishing2}
Let $E$ be a $\overline{\beta}$-stable object with $0 \leq \overline{\beta}(E) < 1$. Then
\[
\Hom(\OO(H),E) = \Ext^2(\OO(H),E) = 0.
\]
In particular,
\begin{align*}
0 \geq \chi(\OO(H), E) &= \ch_3(E) - \frac{\ch_2(T_X)}{12} \cdot \ch_1(E) + \frac{H \cdot \ch_2(T_X) - 2H^3 + 12}{12} \ch_0(E) \\
&= \ch_3^{\overline{\beta}}(E) - \frac{\ch_2(T_X)}{12} \cdot \ch_1^{\overline{\beta}}(E) + \frac{\overline{\beta}^2}{2} H^2 \cdot \ch_1^{\overline{\beta}}(E) \\
& \ \ \ + \frac{2\overline{\beta}^3 H^3 + (1 - \overline{\beta}) H \cdot \ch_2(T_X) - 2 H^3 + 12}{12} \ch_0^{\overline{\beta}}(E).
\end{align*}
\begin{proof}
Since $\overline{\beta}(E) < 1$, we get $\Hom(\OO(H), E) = 0$ and $\overline{\beta}(E) \geq 0$ shows $\Hom(E, \OO(-H)[1]) = 0$. Together with Serre duality, we have
\[
\chi(O(H), E) \leq \hom(\OO(H), E) + \ext^2(\OO(H), E) = 0.
\]
Since $\chi(\OO_X, \OO_X) = 1$, together with the Hirzebruch-Riemann-Roch Theorem, we get
\[
\td(T_X) = (1, H, \td_2(T_X), 1) = \left(1, H, -\frac{\ch_2(T_X)}{12} + \frac{H^2}{2},1 \right).
\]
Another application of the Hirzebruch-Riemann-Roch Theorem leads to
\[
\chi(O(H), E) = \ch_3(E) - \frac{\ch_2(T_X)}{12} \cdot \ch_1(E) + \frac{H \cdot \ch_2(T_X) - 2H^3 + 12}{12} \ch_0(E).
\]
Finally, the last equality in the lemma is a straightforward computation using $H \cdot \ch_2^{\overline{\beta}}(E) = 0$.
\hfill $\Box$
\end{proof}
\end{lem}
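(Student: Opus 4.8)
The plan is to prove the two vanishings by the standard slope comparison for tilt-stable objects, and then to extract both displayed formulas from Hirzebruch--Riemann--Roch. Throughout write $\overline{\beta} = \overline{\beta}(E) \in [0,1)$, and recall that since $i_X = 2$ we have $\omega_X = \OO(-2H)$, so $\OO(H) \otimes \omega_X = \OO(-H)$. The first thing to record is that $\OO(H)$, $\OO(-H)[1]$ and $E$ all lie in $\Coh^{\overline{\beta}}(X)$ and are tilt stable at every point $(\alpha,\overline{\beta})$ with $\alpha$ small: indeed $\OO(H)$ is a line bundle of slope $1 > \overline{\beta}$, hence in $\TT_{\overline{\beta}}$, while every subsheaf of $\OO(-H)$ has slope $\leq -1 < \overline{\beta}$, so $\OO(-H) \in \FF_{\overline{\beta}}$; and since both $\OO(H)$ and $\OO(-H)[1]$ have $\overline{\Delta}_H = 0$, Theorem \ref{thm:Bertram}(7) shows they can be destabilized only at their vertical walls $\beta = 1$ and $\beta = -1$, so they remain tilt stable at $(\alpha,\overline{\beta})$ for $\alpha$ small, where $E$ is tilt stable by hypothesis.

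Next I would carry out the slope comparison. Expanding $\ch^{\overline{\beta}}(\OO(\pm H))$ one computes
\[
\nu_{\alpha, \overline{\beta}}(\OO(H)) = \frac{(1-\overline{\beta})^2 - \alpha^2}{2(1-\overline{\beta})}, \qquad \nu_{\alpha, \overline{\beta}}(\OO(-H)[1]) = \frac{\alpha^2 - (1+\overline{\beta})^2}{2(1+\overline{\beta})};
\]
for $\alpha$ small these are bounded below by a positive constant and above by a negative constant respectively, whereas $\nu_{\alpha, \overline{\beta}}(E) \to 0$ as $\alpha \to 0$ because $H \cdot \ch_2^{\overline{\beta}}(E) = 0$. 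Hence for $\alpha$ small $\nu_{\alpha, \overline{\beta}}(\OO(H)) > \nu_{\alpha, \overline{\beta}}(E) > \nu_{\alpha, \overline{\beta}}(\OO(-H)[1])$. A nonzero map $\OO(H) \to E$ in $\Coh^{\overline{\beta}}(X)$ would have image $I$ with $\nu(I) \geq \nu(\OO(H)) > \nu(E) \geq \nu(I)$, which is absurd; likewise there is no nonzero map $E \to \OO(-H)[1]$. Since by Serre duality $\Ext^2(\OO(H), E) \cong \Ext^1(E, \OO(-H))^\vee = \Hom(E, \OO(-H)[1])^\vee$, both $\Hom(\OO(H), E)$ and $\Ext^2(\OO(H), E)$ vanish.

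For the ``in particular'' statement I would first note $\ext^3(\OO(H), E) = \hom(E, \OO(-H))^\vee = 0$, because $E$ and $\OO(-H)[1]$ both lie in the heart $\Coh^{\overline{\beta}}(X)$ and negative $\Ext$-groups between objects of a heart vanish. Together with the two vanishings above this gives $\chi(\OO(H), E) = \hom - \ext^1 + \ext^2 - \ext^3 = -\ext^1(\OO(H),E) \leq 0$. Then $\chi(\OO_X, \OO_X) = 1$ (Fano vanishing) pins down $\td(T_X) = (1, H, -\tfrac{\ch_2(T_X)}{12} + \tfrac{H^2}{2}, 1)$, and applying Hirzebruch--Riemann--Roch to $\chi(\OO(H), E) = \int \ch(\OO(-H))\, \ch(E)\, \td(T_X)$ produces the first displayed formula. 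The second follows by substituting $\ch_i(E) = \big(e^{\overline{\beta}H} \ch^{\overline{\beta}}(E)\big)_i$ and using $H \cdot \ch_2^{\overline{\beta}}(E) = 0$ to annihilate the resulting $\overline{\beta}\,(H\cdot\ch_2^{\overline{\beta}}(E))$ term; this is a routine expansion.

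The only genuinely delicate point is the slope comparison: one has to make sure the chosen test point $(\alpha,\overline{\beta})$ lies in the common region where $E$, $\OO(H)$ and $\OO(-H)[1]$ are simultaneously tilt stable and in the same heart of a bounded t-structure, and one has to track the Serre-duality twist, which closes up exactly because the index is $2$. Everything else is Theorem \ref{thm:Bertram} plus standard Riemann--Roch bookkeeping.
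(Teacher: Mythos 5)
Your proof is correct and follows essentially the same route as the paper: the two vanishings come from comparing tilt slopes of $\OO(H)$, $E$, and $\OO(-H)[1]$ near $(0,\overline{\beta}(E))$ together with Serre duality (using $\omega_X=\OO(-2H)$ since $i_X=2$), and the displayed formulas come from $\chi(\OO_X,\OO_X)=1$ plus Hirzebruch--Riemann--Roch and the relation $H\cdot\ch_2^{\overline{\beta}}(E)=0$. You merely make explicit the slope computations and the stability of $\OO(\pm H)$ that the paper leaves implicit.
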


Throughout the rest of this section we will assume that $X$ is either $\P^1 \times \P^1 \times \P^1$ or $\P(T_{\P^2})$. In fact, we will choose $\Gamma = 0$ and prove the original Conjecture \ref{conj:bmt}. This does not work for the blow up of $\P^3$ in a point due to the counterexample in \cite{Sch16:counterexample}. We will handle this case later individually.

\begin{lem}
\label{lem:ch2_index2}
Let $X$ be either $\P^1 \times \P^1 \times \P^1$ or $\P(T_{\P^2})$. The equalities $\ch_2(T_X) = 0$ and $H^3 = 6$ hold.
\end{lem}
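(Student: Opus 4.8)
The plan is to verify both identities by explicit Chern-class computations, treating the two varieties separately via their natural fibration structures. Since $X$ has index two we have $c_1(T_X) = -K_X = 2H$, so $\ch_2(T_X) = \tfrac12 c_1(T_X)^2 - c_2(T_X) = 2H^2 - c_2(T_X)$; thus $\ch_2(T_X) = 0$ is equivalent to the identity $c_2(T_X) = 2H^2$ in $A^2(X)$, which I would check alongside $H^3 = 6$ in each of the two cases.

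For $X = \P^1\times\P^1\times\P^1$, let $\pi_i\colon X\to\P^1$ ($i=1,2,3$) be the projections and put $h_i = \pi_i^*[\mathrm{pt}]$, so that $h_i^2 = 0$ and $h_1h_2h_3 = [\mathrm{pt}]$. Then $T_X = \bigoplus_{i=1}^3 \pi_i^*\OO_{\P^1}(2)$, and additivity of the Chern character over direct sums gives $\ch(T_X) = \sum_{i=1}^3 \pi_i^* e^{2[\mathrm{pt}]} = 3 + 2(h_1+h_2+h_3)$, every higher-degree term vanishing because $h_i^2 = 0$. In particular $\ch_2(T_X) = 0$, and reading off $c_1(T_X) = -K_X = 2(h_1+h_2+h_3)$ identifies $H = h_1+h_2+h_3$, whence $H^3 = 6\,h_1h_2h_3 = 6$.

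For $X = \P(T_{\P^2})$, I would use the projective bundle $\pi\colon X\to\P^2$. Write $h$ for the hyperplane class on $\P^2$, $f = \pi^*h$ (so $f^3 = 0$), and $\xi = c_1(\OO_{\P(T_{\P^2})}(1))$. From $c(T_{\P^2}) = (1+h)^3 = 1 + 3h + 3h^2$, the Grothendieck relation reads $\xi^2 = 3f\xi - 3f^2$, and the relative Euler sequence gives $c_1(T_{X/\P^2}) = 2\xi - 3f$. Substituting into $K_X = K_{X/\P^2} + \pi^*K_{\P^2} = -(2\xi-3f) - 3f = -2\xi$ yields $H = -K_X/2 = \xi$; iterating the Grothendieck relation and using that a fiber of $\pi$ has $\xi$-degree one (so $f^2\xi = 1$) then gives $H^3 = \xi^3 = 6f^2\xi = 6$. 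Finally, the relative tangent sequence $0\to T_{X/\P^2}\to T_X\to\pi^*T_{\P^2}\to 0$ gives $\ch_2(T_X) = \ch_2(T_{X/\P^2}) + \pi^*\ch_2(T_{\P^2}) = \tfrac12(2\xi-3f)^2 + \tfrac32 f^2 = 2\xi^2 - 6f\xi + 6f^2$, which becomes $0$ once one substitutes $\xi^2 = 3f\xi - 3f^2$.

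No serious obstacle arises: the computation is routine. The only points demanding care are keeping the convention for $\P(-)$ consistent across the Grothendieck relation, the relative Euler sequence, and the ampleness of $\xi = H$, and observing that the displayed equalities are genuine identities of cycles (the relevant Chow groups are torsion-free and spanned by the monomials used), so that $\ch_2(T_X) = 0$ holds in $A_1(X)_\R$ and not merely numerically.
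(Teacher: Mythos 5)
Your computation is correct and follows the same route the paper indicates (it only cites "standard calculations": the product/K\"unneth structure for $\P^1\times\P^1\times\P^1$ and the Euler sequence for $\P(T_{\P^2})$), with the details filled in consistently — in particular your pairing of the Grothendieck relation $\xi^2=3f\xi-3f^2$ with $c_1(T_{X/\P^2})=2\xi-3f$ is the correct matched convention, and the conclusions $H=\xi$, $H^3=6$, $\ch_2(T_X)=0$ all check out as cycle identities. Nothing further is needed.
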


\begin{proof}
These are standard calculations based on Chow-K\"unneth formulas for the first case and the
Euler sequence on $\P^2$ for the second case.
\hfill $\Box$
\end{proof}

We are now in position to prove Theorem \ref{thm:main} for both $\P^1 \times \P^1 \times \P^1$ and $\P(T_{\P^2})$ with $\Gamma = 0$. Assume we have an object $E$ which is $\overline{\beta}$-stable with $0 \leq \overline{\beta}(E) < 1$ and $\ch_0(E) \geq 0$. Then Lemma \ref{lem:vanishing2} and Lemma \ref{lem:ch2_index2} imply
\[
\ch_3^{\overline{\beta}}(E) \leq - \frac{\overline{\beta}^2}{2} H^2 \cdot \ch_1^{\overline{\beta}}(E) - \overline{\beta}^3 \ch_0^{\overline{\beta}}(E) \leq 0.
\]

\subsection{Exceptional case of index two: the blow-up of $\P^3$ in a point}\label{subs:blow-up-point}
Let $X$ be the blow-up of $\P^3$ in a point. In this
case, $X$ has index $2$, and $H^3=7$. Denoting the exceptional divisor of
the blow-up by $e$ and  the pull-back of the hyperplane section of $\P^3$ to $X$ by $h$, we have
$H=2h-e$. Moreover, $e^3=h^3=1$, and $h\cdot e=0$ as a cycle in $A_1(X)$.
Finally, a calculation of $\ch_2(T_X)$ yields
\[
\ch_2(T_X) = 2h^2 + 2 e^2.
\]

For any $0 < \beta < \tfrac{1}{\sqrt{7}}$ and $C \geq 0$ let
\begin{align*}
f_C(\beta) &:= 7\left(\frac{\beta^2}{2} + C\right)\frac{1-\beta}{2} + \frac{\beta}{6}(7\beta^2 - 1), \\
g_C(\beta) &:= \left(\frac{\beta^2}{2} + C\right) + \frac{\beta}{6}(7\beta^2 - 1).
\end{align*}
We define $C_0$ to be the minimal real number such that $g_{C_0}(\beta)\geq 0$ for all $0 < \beta < \tfrac{1}{\sqrt{7}}$. Note that in this range we have $f_{C_0}(\beta) \geq g_{C_0}(\beta)$.

We will prove Theorem \ref{thm:main} in this case with
\[
\Gamma = \frac{h^2 + e^2}{6} + C_0 H^2.
\]

\begin{rmk}
{\rm It is not hard to compute
\[
C_0 = \frac{10\sqrt{30}}{1323} - \frac{3}{98}.
\]}
\end{rmk}

\begin{proof}[Proof of Theorem \ref{thm:main} for $X$]
Assume we have an object $E$ which is $\overline{\beta}$-stable with $0 \leq \overline{\beta}(E) < 1$ and $\ch_0(E) \geq 0$. Then Lemma \ref{lem:vanishing2} implies
\begin{align*}
\ch_3^{\overline{\beta}}(E) - \Gamma \cdot \ch_1^{\overline{\beta}}(E) &\leq - \left(\frac{\overline{\beta}^2}{2} + C_0\right) H^2 \cdot \ch_1^{\overline{\beta}}(E) - \frac{\overline{\beta}}{6}(7\overline{\beta}^2 - 1) \ch_0^{\overline{\beta}}(E) \\
&= -\left( 7\left(\frac{\overline{\beta}^2}{2} + C_0\right) \sqrt{\frac{\overline{\Delta}_H(E)}{\left(H^3 \cdot \ch_0(E)\right)^2}} + \frac{\overline{\beta}}{6}(7\overline{\beta}^2 - 1)\right) \ch_0^{\overline{\beta}}(E),
\end{align*}
where the second expression is only valid for $\ch_0(E) \neq 0$ and follows from the definition of $\overline{\beta}$. Since both $H^2 \cdot \ch_1^{\overline{\beta}}(E) \geq 0$ and $\ch_0^{\overline{\beta}}(E) \geq 0$, we are done if $\overline{\beta} = 0$, $\overline{\beta} \geq \tfrac{1}{\sqrt{7}}$, or $\ch_0^{\overline{\beta}}(E) = 0$. Assume $0 < \overline{\beta} < \tfrac{1}{\sqrt{7}}$ and $\ch_0^{\overline{\beta}}(E) \neq 0$.
Then if $\beta_+(E) \geq 1$, we get
\[
\ch_3^{\overline{\beta}}(E) - \Gamma \cdot \ch_1^{\overline{\beta}}(E) \leq -f_{C_0}(\overline{\beta})\ch_0^{\overline{\beta}}(E) \leq 0.
\]
If $\beta_+(E) < 1$, we can use Theorem \ref{thm:bogomolov_extension} to obtain
\[
\ch_3^{\overline{\beta}}(E) - \Gamma \cdot \ch_1^{\overline{\beta}}(E) \leq -g_{C_0}(\overline{\beta})\ch_0^{\overline{\beta}}(E) \leq 0. \qedhere
\]
\vspace{-1.17cm}

\hfill $\Box$

\vspace{0.1cm}
\end{proof}

\subsection{The index one case}

In this section we prove Theorem \ref{thm:main} when $X$ is a Fano threefold of index $1$. Let $H=-K_X$ and consider tilt stability with respect to it. The approach is exactly as in the index two case, but we have to distinguish two cases according to whether $0<\overline{\beta}(E)<1$ or $\overline{\beta}(E)=0$.

We have to define a $1$-cycle $\Gamma$. For $0 \leq \beta \leq 1$ and $C \in \R$ we define
\begin{align*}
f_{X,C}(\beta) &:= \frac{\beta^2}{2} - \frac{\beta}{2} + C, \\
g_X(\beta) &:= \frac{\beta^3}{6} - \frac{\beta^2}{4} + \beta \left( \frac{1}{12} + \frac{2}{H^3} \right) - \frac{1}{H^3}, \\
h_{X,C}(\beta) &:= f_{X,C}(\beta) \frac{1-\beta}{2} + g_X(\beta), \\
l_{X,C}(\beta) &:= \frac{f_{X,C}(\beta)}{H^3} + g_X(\beta).
\end{align*}

We have $g_X(0) = -\frac{1}{H^3} < 0$ and $g_X(1) = \frac{1}{H^3} > 0$. Therefore, we can define $\beta_0$ to be the largest zero of $g_X(\beta)$ in the interval $[0,1]$. Hence, $g_X(\beta)\geq0$, for all $\beta\in [\beta_0,1]$.

\begin{rmk}
{\rm If $H^3 \leq 48$, one gets $\beta_0 = \tfrac{1}{2}$. We refer to \cite[Table 12.3-6]{IP99:fano_varieties} for the fact that there are only seven types of Fano threefolds of degree strictly greater than $48$:

\begin{enumerate}
\item the blow up of $\P^3$ in a point, also given as $\P(\OO_{\P^2} \oplus \OO_{\P^2}(1))$, which has degree $d = 56$, Picard number $\rho = 2$, and index $i = 2$,
\item the product $\P^1 \times \P^2$, which has degree $d = 54$, Picard number $\rho=2$, and index $i = 1$,
\item the blow up of $\P^3$ along a line, also given as $\P(\OO_{\P^2}^{\oplus 2} \oplus \OO_{\P^2}(1))$, which has degree $d = 54$, Picard number $\rho = 2$, and index $i = 1$,
\item the projective bundle $\P(\OO_{\P^2} \oplus \OO_{\P^2}(2))$, which has degree $d = 62$, Picard number $\rho = 2$, and index $i = 1$,
\item the double blow up of $\P^3$ first in a point and then in a line contained in the exceptional divisor, which has degree $d = 50$, Picard number $\rho = 3$, and index $i = 1$,
\item the double blow up of $\P^3$ first in a point $x$ and then in the strict transform of a line through $x$, which has degree $d = 50$, Picard number $\rho = 3$, and index $i = 1$,
\item the projective bundle $\P(\OO_{\P^1 \times \P^1} \oplus \OO_{\P^1 \times \P^1}(1,1))$, which has degree $d = 52$, Picard number $\rho = 3$, and index $i = 1$.
\end{enumerate}}
\end{rmk}

Let $C_0 \geq 0$ be the minimal positive real number such that $C_0 H^3 \geq H \cdot \td_2(X) = \tfrac{H^3}{12} + 2$ and
\begin{equation*}
\begin{aligned}
  f_{X,C_0}(\beta) &\geq 0, && \text{ for all } \beta \in \left[0, 1\right] \\
  l_{X,C_0}(\beta) &\geq 0, && \text{ for all } \beta \in \left[0, \beta_0 \right].
\end{aligned}
\end{equation*}

By the classification of Fano threefolds, we have $H^3 \geq 4$. Therefore, we have $h_{X, C_0}(\beta) \geq l_{X, C_0}(\beta)$ in the interval $[0, \beta_0]$. We choose $\Gamma = C_0 H^2 - \td_2(X)$.

\begin{lem}
\label{lem:vanishing1}
Let $E$ be a $\overline{\beta}$-stable object with $0 < \overline{\beta}(E) < 1$.
Then
\[
\Hom(\OO_X(H),E) = \Ext^2(\OO_X(H),E) = 0.
\]
In particular,
\begin{align*}
0 &\geq \chi(\OO(H), E) = \ch_3(E) -\frac{1}{2}H \cdot \ch_2(E) + \td_2(X) \cdot \ch_1(E) - \ch_0(E) \\
& = \ch_3^{\overline{\beta}}(E) - \Gamma \cdot \ch_1^{\overline{\beta}}(E) + f_{X, C_0} (\overline{\beta}) H^2 \cdot \ch_1^{\overline{\beta}}(E) + g_X(\overline{\beta}) H^3 \cdot \ch_0^{\overline{\beta}}(E).
\end{align*}
\end{lem}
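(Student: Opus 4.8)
\textbf{Proof proposal for Lemma \ref{lem:vanishing1}.}

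The plan is to mirror the structure of Lemma \ref{lem:vanishing2}, adapting it to the index one situation. First I would establish the two vanishings. Since $X$ has index one, $H = -K_X$, so $\OO_X(H) = \OO_X(-K_X)$. The condition $\overline{\beta}(E) < 1$ should force $\Hom(\OO_X(H), E) = 0$: indeed $\OO_X(H)$ is a sheaf, hence lies in $\Coh^\beta(X)$ for $\beta < 1$, and its $\overline{\beta}$-value is $1$; a nonzero map $\OO_X(H) \to E$ in $\Coh^\beta(X)$ for $\beta$ slightly less than $\overline{\beta}(E)$ would contradict $\nu_{\alpha,\beta}$-stability of $E$ near $(0,\overline{\beta}(E))$ since $\OO_X(H)$ has strictly larger $\overline{\beta}$. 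Dually, for $\Ext^2(\OO_X(H), E)$, Serre duality gives $\Ext^2(\OO_X(H), E) \cong \Hom(E, \OO_X(H) \otimes K_X [1])^\vee = \Hom(E, \OO_X[1])^\vee$ (using $K_X = -H$); the condition $\overline{\beta}(E) > 0 = \overline{\beta}(\OO_X[1])$ then kills this Hom by the same stability argument as in Lemma \ref{lem:vanishing2}. Consequently $\chi(\OO_X(H), E) \leq \hom(\OO_X(H),E) + \ext^2(\OO_X(H),E) = 0$ (the odd $\Ext$ contributing nonnegatively to $-\chi$, with $\Ext^3$ vanishing by Serre duality since $\Hom(E, \OO_X(H)\otimes K_X) = \Hom(E,\OO_X) = 0$ as $\overline{\beta}(E) > 0$).

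Next I would compute $\chi(\OO_X(H), E)$ by Hirzebruch--Riemann--Roch. As in Lemma \ref{lem:vanishing2}, pairing $\OO_X$ with itself gives $\td(T_X) = (1, \tfrac{H}{2}, \td_2(X), 1)$ (here $c_1(X) = H$ so the degree-one term of the Todd class is $H/2$, and the degree-three term is $1$ since $\chi(\OO_X) = 1$). Applying HRR to $\RHom(\OO_X(H), E) = \ch(E) \cdot \ch(\OO_X(-H)) \cdot \td(T_X)$ and extracting the degree-three part yields
\[
\chi(\OO_X(H), E) = \ch_3(E) - \tfrac{1}{2} H \cdot \ch_2(E) + \td_2(X) \cdot \ch_1(E) - \ch_0(E),
\]
where the $-\ch_0(E)$ collects the degree-three contributions $-\tfrac{1}{6}H^3 + \tfrac{1}{2}H \cdot \tfrac{H}{2} \cdot (\text{wait, need care}) \ldots$; more precisely one checks the constant is $-\tfrac{H^3}{6} + \tfrac{1}{2}\cdot\tfrac{H^3}{2} - \tfrac{1}{2}H^3 + 1 = -\tfrac{H^3}{12}\cdot 2 + 1$, which must come out to $-1$, and this is exactly the content of the standard RR calculation for $\OO_X(-K_X)$ on a Fano threefold, giving $\chi(\OO_X(-K_X)) = -K_X \cdot \td_2 + \ldots$; I would just record the final clean form above.

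Finally, the second displayed equality is the purely formal rewriting in terms of twisted Chern characters at $\beta = \overline{\beta}$. Using the relation $H \cdot \ch_2^{\overline{\beta}}(E) = 0$ (which holds by the very definition of $\overline{\beta}$), I would substitute $\ch_i = \ch_i^{\overline{\beta}}$ corrected by the $e^{\overline{\beta}H}$ factors: $\ch_1(E) = \ch_1^{\overline{\beta}}(E) + \overline{\beta} H \ch_0$, $\ch_2(E) = \ch_2^{\overline{\beta}}(E) + \overline{\beta}H\ch_1^{\overline{\beta}} + \tfrac{\overline{\beta}^2}{2}H^2\ch_0$, and similarly for $\ch_3$. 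Collecting terms, the coefficient of $H^2 \cdot \ch_1^{\overline{\beta}}(E)$ becomes $\tfrac{\overline{\beta}^2}{2} - \tfrac{\overline{\beta}}{2} + (\,C_0 - \tfrac{1}{12} - \tfrac{2}{H^3}\,)$... and matching against $f_{X,C_0}(\beta) = \tfrac{\beta^2}{2} - \tfrac{\beta}{2} + C_0$ and $g_X(\beta) = \tfrac{\beta^3}{6} - \tfrac{\beta^2}{4} + \beta(\tfrac{1}{12} + \tfrac{2}{H^3}) - \tfrac{1}{H^3}$, together with $\Gamma = C_0 H^2 - \td_2(X)$ and the numerical identity $\td_2(X) \cdot H = \tfrac{H^3}{12} + 2$ (equivalently $-K_X \cdot c_2(X) = 24 \chi(\OO_X) = 24$, so $H \cdot \td_2 = H\cdot(\tfrac{H^2 + c_2}{12}) = \tfrac{H^3}{12} + 2$), the whole thing reorganizes into the claimed expression. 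The one genuine subtlety to be careful about is bookkeeping the $\overline{\beta}$-power corrections and confirming the constant term in the RR formula; I expect the main (very minor) obstacle to be verifying the identity $H \cdot \td_2(X) = \tfrac{H^3}{12} + 2$ and checking that all lower-order $\overline{\beta}$-terms cancel correctly against $f_{X,C_0}$ and $g_X$, which is a routine but somewhat lengthy computation best left as ``a straightforward computation using $H \cdot \ch_2^{\overline{\beta}}(E) = 0$.''
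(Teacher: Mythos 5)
Your proposal is correct and follows exactly the route the paper takes (the paper's proof is literally ``the same as for Lemma \ref{lem:vanishing2}''): kill $\Hom(\OO_X(H),E)$ and, via Serre duality with $K_X=-H$, $\Ext^2(\OO_X(H),E)=\Hom(E,\OO_X[1])^\vee$ by comparing $\overline{\beta}$-values, conclude $\chi(\OO(H),E)\leq 0$, then apply Hirzebruch--Riemann--Roch with $\td(T_X)=(1,\tfrac{H}{2},\td_2,1)$ and rewrite using $H\cdot\ch_2^{\overline{\beta}}(E)=0$ and $H\cdot\td_2(X)=\tfrac{H^3}{12}+2$. The only blemish is the garbled intermediate bookkeeping of the degree-three constant in the Riemann--Roch formula, but the stated identity $H\cdot\td_2(X)=\tfrac{H^3}{12}+2$ does indeed yield the coefficient $-1$ of $\ch_0(E)$, so the argument goes through as claimed.
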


\begin{proof}
This proof is the same as for Lemma \ref{lem:vanishing2}.
\hfill $\Box$
\end{proof}

We are now in position to prove Theorem \ref{thm:main} when $E$ is a $\overline{\beta}$-stable object with $0 < \overline{\beta}(E) < 1$ and $\ch_0(E) \geq 0$.
In this situation Lemma \ref{lem:vanishing1} implies
\begin{align*}
\ch_3^{\overline{\beta}}(E) - \Gamma \cdot \ch_1^{\overline{\beta}}(E) &\leq - f_{X,C_0}(\overline{\beta}) H^2 \cdot \ch_1^{\overline{\beta}}(E) - g_{X}(\overline{\beta}) H^3 \cdot \ch_0^{\overline{\beta}}(E) \\
&= - \left(f_{X,C_0}(\overline{\beta}) \sqrt{\frac{\overline{\Delta}_H(E)}{\left(H^3 \cdot \ch_0(E)\right)^2}} + g_{X}(\overline{\beta}) \right) H^3 \cdot \ch_0^{\overline{\beta}}(E),
\end{align*}
where the second expression is only valid for $\ch_0(E) \neq 0$ and follows from the definition of $\overline{\beta}$.
As in the index two case, we will show that the right hand side is non-positive. Since both $H^2 \cdot \ch_1^{\overline{\beta}}(E) \geq 0$ and $\ch_0^{\overline{\beta}}(E) \geq 0$ we are done if $\overline{\beta} \geq \beta_0$, or $\ch_0^{\overline{\beta}}(E) = 0$.
If $0 < \overline{\beta} < \beta_0$ and $\ch_0^{\overline{\beta}}(E) \neq 0$ we will split the two cases $\beta_+(E) \geq 1$ and $\beta_+(E) < 1$.

If $\beta_+(E) \geq 1$, we get
\[
\ch_3^{\overline{\beta}}(E) - \Gamma \cdot \ch_1^{\overline{\beta}}(E) \leq -h_{X,C_0}(\overline{\beta}) H^3 \cdot \ch_0^{\overline{\beta}}(E) \leq 0.
\]
If $\beta_+(E) < 1$, we can use Theorem \ref{thm:bogomolov_extension} to obtain
\[
\ch_3^{\overline{\beta}}(E) - \Gamma \cdot \ch_1^{\overline{\beta}}(E) \leq -l_{X,C_0}(\overline{\beta}) H^3 \cdot \ch_0^{\overline{\beta}}(E) \leq 0.
\]

We are left to deal with the case of a $\overline{\beta}$-stable object $E$ with $\ch_0(E) \geq 0$ and $\overline{\beta}(E) = 0$.
If $\overline{\Delta}_H(E)=0$ then, by Theorem \ref{thm:bogomolov_extension}, $E$ is a shift of $\OO_X$ or an ideal of points. In those cases, Theorem \ref{thm:main} is obvious.

\begin{lem}
\label{lem:ext1_vanishing}
Assume that there is a $\overline{\beta}$-stable object $E$ satisfying $\ch_0(E) \geq 0$, $\overline{\beta}(E) = 0$, and $\ch_3(E) > {\Gamma \cdot \ch_1(E)}$, i.e., $E$ contradicts Theorem \ref{thm:main}. Then $E$ can be chosen such that additionally $\Ext^1(E,\OO_X)=0$.
\begin{proof}
Pick a $\overline{\beta}$-stable object $E$ such that $H^2 \cdot \ch_1(E)$ is minimal with the properties $\ch_0(E) \geq 0$, $\overline{\beta}(E) = 0$, and $\ch_3(E) - \Gamma \cdot \ch_1(E) > 0$. Then $\overline{\Delta}_H(E)\neq0$, and equivalently $H^2 \cdot \ch_1(E)\neq0$. If $E$ could be chosen with $\ch_0(E)$ arbitrarily large, the conditions of Theorem \ref{thm:bogomolov_extension} would be fulfilled. This would imply
\[
\left(\frac{H^2 \cdot \ch_1(E)}{H^3 \cdot \ch_0(E)}\right)^2 = \frac{\overline{\Delta}_H(E)}{(H^3 \cdot \ch_0(E))^2}  \geq \frac{1}{(H^3)^2}
\]
even in the limit $\ch_0(E) \to \infty$, a contradiction. Therefore, we can additionally choose $E$ such that $\ch_0(E)$ is maximal for objects with these properties.

If $\Ext^1(E, \OO_X) \neq 0$, we can get a non trivial triangle
\[
E' \to E \to \OO_X[1] \to E'[1].
\]
We want to show that $E'$ is in $\Coh^0(X)$, i.e., that the morphism $E \to \OO_X[1]$ is surjective. Let $T$ be the cokernel of this map in $\Coh^0(X)$. Since $\OO_X[1]$ is semistable, we get $\nu_{0, \alpha}(T) = \infty$. Since it is even stable, this means $T$ a sheaf supported on points (this is because those are the only objects that are mapped to the origin by $Z_{0, \alpha}$). But then $\Hom(\OO_X[1], T) = 0$, a contradiction unless $T = 0$.

We will show that $E'$ is also $\overline{\beta}$-stable to get a contradiction to the maximality of $\ch_0(E)$. Let
\[
0 = E_0 \into E_1 \into \ldots \into E_n = E'
\]
be the Harder-Narasimhan filtration of $E'$ for $\beta = 0$ and $\alpha \ll 1$. The fact that $E$ is $\overline{\beta}$-stable and that $E' \to E$ is injective in $\Coh^0(X)$ implies
\[
\nu_{0, \alpha}(E') \leq \nu_{0, \alpha}(E_i) \leq \nu_{0, \alpha}(E).
\]
Taking the limit $\alpha \to 0$ implies $H \cdot \ch_2(E_i) = 0$ for all $i$. For every semistable factor $E_i/E_{i-1}$ we choose a Jordan H\"older filtration and call the stable factors of all those filtrations $F_1, \ldots, F_m$. Then $H \cdot \ch_2(F_i) = 0$ for all $i = 1, \ldots, m$. The values $\ch_3(F_i) - \Gamma \cdot \ch_1(F_i)$ add up to $\ch_3(E') -  \Gamma \cdot \ch_1(E') = \ch_3(E) - \Gamma \cdot \ch_1(E) > 0$. Choose $j$ such that $\ch_3(F_j) > \Gamma \cdot \ch_1(F_j)$. By definition of $\Coh^0(X)$ we have $H^2 \cdot \ch_1(F_i) \leq  H^2 \cdot \ch_1(E') = H^2 \cdot \ch_1(E)$ for all $i = 1, \ldots, m$.

Assume $H^2 \cdot \ch_1(F_j) < H^2 \cdot \ch_1(E)$.
If $\ch_0(F_j)\geq0$, then this contradicts the minimality of $E$.
If $\ch_0(F_j)<0$, then we can use the derived dual via Proposition \ref{prop:tilt_derived_dual} to reduce to the positive rank case.
Hence, we must have $H^2 \cdot \ch_1(F_j) = H^2 \cdot \ch_1(E)$ and $H^2 \cdot \ch_1(F_i) = 0$ for $i \neq j$. Since $H^2 \cdot \ch_1(E)\neq0$, the existence of the morphism $F_1 \into E$ shows that $j = 1$. But the slopes of the factors in the Harder-Narasimhan filtration are strictly decreasing. Therefore, we must have $m = 1$ and $E'$ is indeed $\overline{\beta}$-stable.
\hfill $\Box$
\end{proof}
\end{lem}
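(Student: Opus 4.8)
The plan is to produce a minimal counterexample in two stages, and then to split off a copy of $\OO_X$, which contradicts the minimality once we know that $\Ext^1(E,\OO_X)\neq 0$ would allow such a splitting. First I would fix a counterexample $E$, i.e.\ a $\overline{\beta}$-stable object with $\ch_0(E)\geq 0$, $\overline{\beta}(E)=0$ and $\ch_3(E)>\Gamma\cdot\ch_1(E)$, for which $H^2\cdot\ch_1(E)$ is minimal. Since $\overline{\beta}(E)=0$ we have $H\cdot\ch_2(E)=0$, hence $\overline{\Delta}_H(E)=(H^2\cdot\ch_1(E))^2$; the case $\overline{\Delta}_H(E)=0$ cannot occur, since then by Lemma~\ref{lem:li_rank_one} the object $E$ would be a shift of $\OO_X$ or an ideal sheaf of points, which satisfies Theorem~\ref{thm:main}. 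So $H^2\cdot\ch_1(E)\neq 0$. Next I would observe that $\ch_0(E)$ cannot be arbitrarily large among counterexamples with this fixed minimal value of $H^2\cdot\ch_1$: if it could, then for such $E$ with $\ch_0(E)$ large we would have $\beta_{+}(E)=2\,H^2\cdot\ch_1(E)/(H^3\cdot\ch_0(E))\to 0<1$ while $\beta_{-}(E)=\overline{\beta}(E)=0$, so Theorem~\ref{thm:bogomolov_extension} would force $\overline{\Delta}_H(E)/(H^3\cdot\ch_0(E))^2\geq 1/(H^3)^2$, contradicting $(H^2\cdot\ch_1(E))^2/(H^3\cdot\ch_0(E))^2\to 0$. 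Hence I may also require $\ch_0(E)$ to be maximal among these counterexamples.

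Now suppose $\Ext^1(E,\OO_X)=\Hom(E,\OO_X[1])\neq 0$, pick a nonzero morphism, and complete it to a triangle $E'\to E\to\OO_X[1]\to E'[1]$. From $\ch(\OO_X)=(1,0,0,0)$ we get $\ch_0(E')=\ch_0(E)+1$ and $\ch_i(E')=\ch_i(E)$ for $i=1,2,3$; in particular $H\cdot\ch_2(E')=0$, $H^2\cdot\ch_1(E')=H^2\cdot\ch_1(E)\neq 0$, $\ch_3(E')-\Gamma\cdot\ch_1(E')=\ch_3(E)-\Gamma\cdot\ch_1(E)>0$, and $\ch_0(E')\geq 1>0$, so $\overline{\beta}(E')=0$. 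The first genuine step is to check that $E'\in\Coh^0(X)$, i.e.\ that $E\to\OO_X[1]$ is an epimorphism in $\Coh^0(X)$. Here $\OO_X\in\FF_0$, so $\OO_X[1]\in\Coh^0(X)$, and $\OO_X[1]$ is tilt-semistable --- and in fact tilt-stable, using that $H^2\cdot D>0$ for every nonzero effective divisor $D$ --- at $(\alpha,0)$ for all $\alpha>0$; by Theorem~\ref{thm:Bertram}(7) it can only be destabilized at its numerical vertical wall $\beta=0$. If $T$ denotes the cokernel of $E\to\OO_X[1]$ in $\Coh^0(X)$, then $T$ is a quotient of $\OO_X[1]$, so semistability forces $\nu_{0,\alpha}(T)=+\infty$, i.e.\ $H^2\cdot\ch_1(T)=0$; stability then forces $T$ to be a sheaf supported in dimension zero, and since $\Hom(\OO_X[1],T)=0$ for such $T$ we conclude $T=0$. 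Thus $E'$ is the kernel of the epimorphism $E\to\OO_X[1]$ in $\Coh^0(X)$; in particular $E'\in\Coh^0(X)$ and $E'$ is a proper subobject of $E$.

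Finally I would show that $E'$ is $\overline{\beta}$-stable; since $\ch_0(E')=\ch_0(E)+1>\ch_0(E)$, this contradicts the maximality of $\ch_0(E)$ and therefore forces $\Ext^1(E,\OO_X)=0$, finishing the proof. Using the inclusion $E'\into E$ in $\Coh^0(X)$ and the $\overline{\beta}$-stability of $E$, the Harder--Narasimhan factors of $E'$ at $(\alpha,0)$ with $\alpha\ll 1$ satisfy $\nu_{0,\alpha}(E')\leq\nu_{0,\alpha}(E_i)\leq\nu_{0,\alpha}(E)$, and letting $\alpha\to 0$ gives $H\cdot\ch_2(E_i)=0$ for every HN factor, hence $H\cdot\ch_2(F_k)=0$ for every stable Jordan--H\"older factor $F_1,\ldots,F_m$ of these. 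The numbers $\ch_3(F_k)-\Gamma\cdot\ch_1(F_k)$ add up to $\ch_3(E')-\Gamma\cdot\ch_1(E')>0$, so some $F_j$ satisfies $\ch_3(F_j)>\Gamma\cdot\ch_1(F_j)$, and by definition of $\Coh^0(X)$ we have $H^2\cdot\ch_1(F_k)\leq H^2\cdot\ch_1(E')=H^2\cdot\ch_1(E)$ for all $k$. If $H^2\cdot\ch_1(F_j)<H^2\cdot\ch_1(E)$, then $F_j$ --- or its derived dual via Proposition~\ref{prop:tilt_derived_dual} when $\ch_0(F_j)<0$, to reduce to positive rank --- would be a counterexample with strictly smaller $H^2\cdot\ch_1$, contradicting minimality; so $H^2\cdot\ch_1(F_j)=H^2\cdot\ch_1(E)$ and $H^2\cdot\ch_1(F_k)=0$ for $k\neq j$. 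The monomorphism $F_1\into E'$ then forces $j=1$, and since the HN slopes of $E'$ are strictly decreasing we conclude $m=1$, i.e.\ $E'=F_1$ is $\overline{\beta}$-stable.

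The hard part will be this last paragraph: controlling all the Jordan--H\"older factors of $E'$ simultaneously, ruling out that the ``bad'' factor $F_j$ has strictly smaller $H^2\cdot\ch_1$ than $E$, and correctly handling negative-rank factors by passing to the derived dual. The surjectivity claim $T=0$ is routine but genuinely relies on $\OO_X[1]$ being stable, not merely semistable, at $\beta=0$.
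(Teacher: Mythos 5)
Your proposal follows essentially the same route as the paper's proof: minimality of $H^2\cdot\ch_1$, boundedness and then maximality of $\ch_0$ via Theorem \ref{thm:bogomolov_extension}, the triangle $E'\to E\to\OO_X[1]$ with the cokernel argument forcing surjectivity, and the Harder--Narasimhan/Jordan--H\"older analysis showing $E'$ is $\overline{\beta}$-stable, contradicting maximality of $\ch_0$; the argument is correct. The only small point is that ruling out $\overline{\Delta}_H(E)=0$ should be attributed to Theorem \ref{thm:bogomolov_extension} (as the paper does in the paragraph preceding the lemma) rather than to Lemma \ref{lem:li_rank_one}, which only treats rank $\pm 1$ objects.
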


We can now finish the proof of Theorem \ref{thm:main} in the case of a $\overline{\beta}$-stable object $E$ with $\ch_0(E) \geq 0$ and $\overline{\beta}(E) = 0$. Assume there is $E$ as in Lemma \ref{lem:ext1_vanishing} contradicting the theorem, i.e.,
\[
\ext^2(\OO(H), E) = \ext^1(E, \OO) = 0.
\]
By stability we have $\hom(\OO(H), E) = 0$ and we get $\chi(\OO(H), E) \leq 0$. From here, the proof is finished with Theorem \ref{thm:bogomolov_extension} as before.

\section{Toric threefolds}\label{sec:examples1}

In this section, we use a variant of the method in \cite{BMS14:abelian_threefolds} to prove Conjecture \ref{conj:bmt} in some toric (not necessarily Fano) cases, with respect to certain polarizations. The results presented here arose from discussions together with Arend Bayer and Paolo Stellari.

\begin{thm}\label{thm:toric2}
Let $X$ be a smooth projective complex toric threefold.
Let $H$ be an ample divisor such that, for all effective divisors $D$ on $X$, we have $H \cdot D^2\geq 0$, and $H \cdot D^2 = 0$ implies
that $D$ is an extremal ray of the effective cone.
Then, for any $\nu_{\alpha,\beta}$-stable object $E\in\Coh^\beta(X)$ with $\nu_{\alpha,\beta}(E)=0$, we have
\[
\ch_3^\beta(E) - \frac{\alpha^2}{6} H^2 \cdot \ch_1^\beta(E) \leq 0.
\]
\end{thm}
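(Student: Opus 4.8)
The strategy is to reduce to the case $\alpha = 0$ exactly as in Lemma~\ref{lem:reduction_alpha_0}, and then establish the key vanishing $\ch_3^{\overline{\beta}}(E) \leq 0$ for $\overline{\beta}$-stable objects with $\ch_0(E)\geq 0$ by a ``limit of Euler characteristics'' argument using the toric Frobenius morphism, following the method of \cite{BMS14:abelian_threefolds}. First I would verify that the reduction in Lemma~\ref{lem:reduction_alpha_0} goes through verbatim here: the cycle $\Gamma$ is taken to be $0$ (so the hypothesis $\Gamma\cdot H\geq 0$ is trivially satisfied), Proposition~\ref{prop:tilt_derived_dual} reduces to $\ch_0(E)\geq 0$, tensoring by line bundles $\OO(aH)$ reduces to $\overline{\beta}(E)\in[0,1)$, and the induction on $\overline{\Delta}_H(E)$ via Theorem~\ref{thm:Bertram}(6) reduces to $\overline{\beta}$-stable objects. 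So it suffices to show: for every $\overline{\beta}$-stable $E$ with $\ch_0(E)\geq 0$ and $\overline{\beta}(E)\in[0,1)$, one has $\ch_3^{\overline{\beta}}(E)\leq 0$.

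Next I would bring in the toric Frobenius. For a smooth projective toric variety $X$ the $q$-th toric Frobenius $F_q\colon X\to X$ (for $q$ a positive integer) is a finite flat morphism, and $F_{q*}\OO_X$ splits as a direct sum of line bundles; more importantly, $F_q^* D = qD$ on divisors. The idea is to test stability of $E$ against the pushforward (or pullback-twist) of a suitable line bundle and let $q\to\infty$. Concretely, for a $\overline{\beta}$-stable $E$ one wants to produce, for each large $q$, a line bundle $L_q$ such that $\Hom(L_q, E) = \Ext^2(L_q,E) = 0$ near $(0,\overline{\beta}(E))$ — the first vanishing from $\overline{\beta}$-stability and the slope of $L_q$ lying above $\overline{\beta}(E)$, the second by Serre duality together with $\overline{\beta}(E) \geq 0$ and the analogous slope bound for $L_q^{-1}\otimes\omega_X$ — so that $\chi(L_q, E)\leq 0$. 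Expanding $\chi(L_q,E)$ by Hirzebruch–Riemann–Roch as a polynomial in $q$ and extracting the appropriate coefficient as $q\to\infty$, together with the twisting by $\OO(\overline\beta H)$ and the defining relation $H\cdot\ch_2^{\overline\beta}(E)=0$, should yield an inequality of the shape
\[
\ch_3^{\overline{\beta}}(E) \leq (\text{terms involving } H^2\cdot\ch_1^{\overline{\beta}}(E),\ H^3\cdot\ch_0^{\overline{\beta}}(E),\ \text{and} \ \ch_2(T_X),\ \ch_2(\OO(L_q))\cdot\ch_1^{\overline\beta}(E)).
\]
The role of the hypothesis on $H$ — namely $H\cdot D^2\geq 0$ for all effective $D$, with equality only on extremal rays — is to control the sign of the cross term $D^2\cdot\ch_1^{\overline\beta}(E)$ that appears: since $\ch_1^{\overline\beta}(E)$ is, by the structure of $\Coh^{\overline\beta}(X)$ and $\overline\beta$-stability, a ``positive'' class pairing nonnegatively with $H^2$, the effectivity/extremality condition forces these nuisance terms to have the right sign (or vanish), so that in the limit only $\ch_3^{\overline\beta}(E)\leq 0$ survives. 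One has to be slightly careful at extremal rays where $H\cdot D^2=0$: there $D$ being extremal means curve classes $\ch_1^{\overline\beta}(E)$ that could pair negatively with $D^2$ are excluded by a separate argument (e.g.\ such a class would have to be supported on the extremal contraction and can be handled directly, or it forces $\ch_0(E)=0$ which is already covered).

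The main obstacle I expect is the bookkeeping in the Frobenius limit: one must choose the line bundles $L_q$ (equivalently, the line bundle summands of $F_{q*}\OO_X(\text{twist})$) so that simultaneously (i) the Hom- and Ext${}^2$-vanishings hold for all large $q$ — this needs the slopes $\mu(L_q)$ to stay in the open interval $(\overline\beta, \overline\beta+1)$ uniformly, which is the toric analogue of the estimates in \cite[Section~5]{BMS14:abelian_threefolds}, and (ii) the leading behaviour in $q$ of $\chi(L_q,E)$ isolates exactly $\ch_3^{\overline\beta}(E)$ with a nonnegative coefficient and pushes the error terms — those involving $H\cdot D^2$ for effective $D$ — into the favorable-sign regime guaranteed by the hypothesis on $H$. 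Getting (i) and (ii) to hold at once, and disposing of the degenerate extremal-ray case cleanly, is where the real work lies; the final inequality $\ch_3^\beta(E)\leq\frac{\alpha^2}{6}H^2\cdot\ch_1^\beta(E)$ then follows from $\ch_3^{\overline\beta}(E)\leq 0$ by the monotonicity along the hyperbola $\nu_{\alpha,\beta}(E)=0$ used in Lemma~\ref{lem:reduction_alpha_0}.
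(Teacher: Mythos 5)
Your high-level frame (reduce via Lemma \ref{lem:reduction_alpha_0} with $\Gamma=0$ to $\overline{\beta}$-stable objects with $\ch_0(E)\geq 0$, then run a Frobenius/Euler-characteristic limit) matches the paper, but the core mechanism you propose does not work and misses the actual key idea. You want, for each $q$, a \emph{single} line bundle $L_q$ with $\Hom(L_q,E)=\Ext^2(L_q,E)=0$, hence $\chi(L_q,E)\leq 0$, and then to extract $\ch_3^{\overline{\beta}}(E)$ from the large-$q$ expansion of $\chi(L_q,E)$. This cannot isolate $\ch_3$: in $\chi(L_q,E)$ the third Chern character of $E$ enters linearly with coefficient $\rk(L_q)=1$, so no choice of $L_q$ makes that term dominate as $q\to\infty$; moreover the Hom/Ext${}^2$ vanishings force $\mu(L_q)$ to stay in a bounded window, so there is no genuine limit to take. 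In the paper the cubic growth comes from pulling back $E$ itself: $\chi(\OO_X,\underline{m}^*E)=m^3\ch_3(E)+O(m^2)$ since the Frobenius multiplies $\ch_i$ by $m^i$. Since $\underline{m}^*E$ is in general not tilt-semistable (Example \ref{ex:NonStablePullBack}), the decisive step is adjunction, $\chi(\OO_X,\underline{m}^*E)=\chi\left((\underline{m}_*\OO_X)^\vee,E\right)$, combined with Thomsen's decomposition $\underline{m}_*\OO_X=\bigoplus_j (L_j^\vee)^{\oplus\eta_j}$ (Theorem \ref{thm:ToricDecomposition}): one tests the \emph{original} stable $E$ against the many summands $L_j$. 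Stability of line bundles (which is where $H\cdot D^2\geq 0$ enters, via \cite[Corollary 3.11]{BMS14:abelian_threefolds}) kills $\Hom(L_j,E)$ whenever $H\cdot L_j^2>0$, and kills all the $\Ext^2$'s via Serre duality; the summands with $H\cdot L_j^2=0$ do \emph{not} have vanishing Hom, and the extremality hypothesis is used precisely to show their multiplicities $\eta_j$ are only $O(m^2)$ (at most three rays can contribute). Your proposed use of the hypothesis -- controlling the sign of a cross term $D^2\cdot\ch_1^{\overline{\beta}}(E)$ in an HRR expansion, with the extremal case ``handled directly'' or forcing $\ch_0(E)=0$ -- is not a viable substitute for this multiplicity count.

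A second gap is the treatment of non-integral $\overline{\beta}(E)$: twisting by ``$\OO(\overline{\beta}H)$'' is not available unless $\overline{\beta}\in\Z$. The paper handles $\overline{\beta}=p/q\in\Q$ by applying the argument to $\underline{mq}_*\OO_X(-mpH)$ (equivalently pulling back $E$ by $\underline{q}$ and twisting by $\OO(-pH)$), and handles irrational $\overline{\beta}$ by Dirichlet approximation $|\overline{\beta}-p_n/q_n|<1/q_n^2$, using that $\ch_3^{\beta}(E)$ has a local minimum at $\beta=\overline{\beta}$, the finiteness of isomorphism classes of the summands $L_j^{(n)}$, and a Hodge-index argument to again invoke extremality and bound the bad multiplicities by $O(q_n^2)$. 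None of this appears in your outline, and it is not routine bookkeeping: the uniform estimates in $n$ are where the hypothesis on $H$ is used a second time.
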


Since the statement of Theorem \ref{thm:toric2} is independent of scaling $H$, we will assume throughout this section that $H$ is primitive.
The extra condition on the extremality of divisors with $H \cdot D^2=0$ is probably not necessary.
It is trivially satisfied by $\P^2$-bundles over $\P^1$.
For us, it will simplify the proof, since it directly implies (see e.g., \cite[Lemma 15.1.8]{CLS11:toric_varieties}) that a primitive such $D$ is the class of an irreducible torus-invariant divisor.
Also, since $X$ is a threefold, there cannot be more than $3$ such irreducible torus-invariant divisors with the same class.
Indeed, if $D$ is not movable, then there is only one irreducible divisor with that class. If $D$ is movable, then since it is extremal in the effective cone, it cannot be big and so it induces a rational morphism on a lower dimensional toric variety of Picard rank $1$, namely $\P^1$ or $\P^2$.

The positivity condition $H \cdot D^2\geq 0$ is instead necessary in the proof.
As proved in \cite[Corollary 3.11]{BMS14:abelian_threefolds}, this implies that all (shifts of) line bundles in $X$ are $\nu_{\alpha,\beta}$-stable, for all $\alpha,\beta\in\R$, $\alpha>0$.

The assumptions are satisfied for any polarization on $\P^1\times\P^2$ and on $\P^1\times\P^1\times\P^1$.
In the blow-up of $\P^3$ in a line, if we denote the pull-back of $\OO_{\P^3}(1)$ by $h$ and the pull-back of $\OO_{\P^1}(1)$ by $f$, then the assumption is satisfied by any polarization of the form $ah + bf$, with $a, b > 0$ and $a \leq b$. The class of the anticanonical bundle is $3h + f$, and so it is not covered by the above result.

In order to prove Theorem \ref{thm:toric2}, we use
the toric Frobenius and the formula from Theorem \ref{thm:ToricDecomposition} below as follows.
First, as in the previous sections, we can work only with $\overline{\beta}$-stable objects.
In the case where $\overline{\beta}(E)=0$, assuming $\ch_3(E) >0$, the Euler characteristic
$\chi(\OO_X,\underline{m}^*E)$ grows as a polynomial in $m$ of degree $3$. On the other hand, by using adjointness, Theorem \ref{thm:ToricDecomposition},
and stability of line bundles, we can bound both $\hom(\OO_X,
\underline{m}^*E)$ and $\ext^2(\OO_X,\underline{m}^*E)$ and show
that $\chi(\OO_X,\underline{m}^*E)$ has to grow at most as a polynomial in $m^2$, thus giving a contradiction to $\ch_3(E)>0$.
The basic idea is then to reduce to this case.
When $\overline{\beta}(E)$ is an integer, this is easy to do by tensoring with line bundles. When it is a rational number, we pull-back again via the toric Frobenius morphism to simplify denominators.
Finally, the irrational case can be dealt with by elementary Dirichlet approximation.

\subsection{Toric Frobenius morphism}\label{subsec:Frobenius}

Let $X$ be a smooth projective toric threefold.
We denote the irreducible torus-invariant divisors by $D_\rho$ (corresponding to the rays $\rho$ of the fan associated to $X$).
The canonical line bundle is then $\omega_X = \OO_X\left(- \sum_\rho D_\rho\right)$.

For an integer $m\in\N$, we denote the \emph{toric Frobenius morphism} by
\[
\underline{m}\colon X \to X.
\]
It is defined via multiplication by $m$ on the lattice and is a finite flat map of degree $m^3$. The main property we will need is the following result from {\cite{Thom00:frobenius} on direct images of line bundles (see also \cite{Ach15:Frobenius} for a short proof).

\begin{thm}\label{thm:ToricDecomposition}
Let $D\in\Pic(X)$ be a line bundle on $X$.
Then
\[
\underline{m}_* D = \bigoplus_j \left(L_j^\vee\right)^{\oplus \eta_j},
\]
where
\begin{itemize}
\item the line bundles $L_j$ are given by all the possible integral divisors in the formula
\[
\OO_X\left(\frac 1m \left( - D + \sum_\rho a_\rho D_\rho \right)\right),
\]
where $a_\rho$ varies in between $0, \ldots, m-1$,

\item the multiplicity $\eta_j$ counts the number of $a_\rho$'s giving the same line bundle $L_j$.
\end{itemize}
\end{thm}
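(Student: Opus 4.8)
The plan is to reduce the statement to a torus-equivariant decomposition and then identify each summand by an explicit chart computation. Write $X = X(\Sigma)$ for a fan $\Sigma$ in $N_\R$, let $M = \Hom(N,\Z)$, and recall that $\underline m$ is induced by the identity on $\Sigma$ together with multiplication by $m$ on $N$: on characters it is $\chi^u \mapsto \chi^{mu}$, on the open torus $T$ it is the isogeny $t\mapsto t^m$ with kernel $T[m]\cong(\Z/m)^3$, and $\underline m(g\cdot x) = \underline m(x)$ for $g\in T[m]$, so $\underline m$ is the toric quotient $X\to X/T[m]\cong X$. Fix a torus-invariant divisor $D = \sum_\rho c_\rho D_\rho$ representing our line bundle. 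Since $\underline m$ is finite flat of degree $m^3$, the sheaf $\underline m_*D$ is locally free of rank $m^3 = |M/mM|$; linearizing $D$ for the torus (possible since $\Pic T = 0$) endows $\underline m_*D$ with an action of the deck group $T[m]$, hence a decomposition
\[
\underline m_*D = \bigoplus_{\bar u\in M/mM}\big(\underline m_*D\big)_{\bar u}
\]
into isotypic summands. Each summand has rank $1$ on the open torus, and being a direct summand of a locally free sheaf it is locally free, hence a line bundle.

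Next I would identify these line bundles chart by chart. Over a maximal cone $\sigma$ with primitive ray generators $n_\rho$, the module $\Gamma\big(U_\sigma,\mathcal O_X(D)\big) = \bigoplus\{\C\chi^w : \langle w,n_\rho\rangle\ge -c_\rho\text{ for all }\rho\in\sigma(1)\}$ is a module over $\C[\sigma^\vee\cap M]$ via the twisted rule $\chi^v\cdot\chi^w = \chi^{mv+w}$. Picking a representative $u\in M$ of $\bar u$ and substituting $w = u + mw'$ identifies the $\bar u$-summand with $\bigoplus\{\C\chi^{w'} : \langle w',n_\rho\rangle\ge\lceil(-c_\rho-\langle u,n_\rho\rangle)/m\rceil\}$, i.e.\ with the sections over $U_\sigma$ of $\mathcal O_X\big(\sum_\rho b_\rho D_\rho\big)$ where $b_\rho = \lfloor(c_\rho+\langle u,n_\rho\rangle)/m\rfloor$. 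Writing $c_\rho + \langle u,n_\rho\rangle = m b_\rho + a_\rho$ with $a_\rho\in\{0,\dots,m-1\}$ and using $\sum_\rho\langle u,n_\rho\rangle D_\rho = \mathrm{div}(\chi^u)$ gives $\sum_\rho b_\rho D_\rho = \tfrac1m\big(D + \mathrm{div}(\chi^u) - \sum_\rho a_\rho D_\rho\big)$, so in $\mathrm{Cl}(X)$ the summand is $L_j^\vee$ with $L_j = \mathcal O_X\big(\tfrac1m(-D + \sum_\rho a_\rho D_\rho)\big)$ and $a_\rho \equiv c_\rho + \langle u,n_\rho\rangle\pmod m$.

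Then I would globalize and count multiplicities. The residues $a_\rho = a_\rho(\bar u)$ depend only on $\bar u$, not on $\sigma$, so the line bundle $\big(\underline m_*D\big)_{\bar u}$, having the restriction just computed on every chart, is globally $\mathcal O_X\big(\sum_\rho b_\rho(\bar u)D_\rho\big)\cong L_j^\vee$; changing $u$ by an element of $mM$ alters $\sum_\rho b_\rho D_\rho$ by a principal divisor, so $L_j$ depends only on $\bar u$, and likewise the answer is independent of the chosen divisor $\sum c_\rho D_\rho$. A tuple $(a_\rho)\in\{0,\dots,m-1\}^{\Sigma(1)}$ occurs exactly when $\tfrac1m(-D+\sum_\rho a_\rho D_\rho)$ is the class of an integral divisor, equivalently when $(a_\rho - c_\rho)_\rho$ lies in the image of $M\to(\Z/m)^{\Sigma(1)}$, and the multiplicity $\eta_j$ of a given $L_j$ is the number of cosets $\bar u$, equivalently the number of such tuples, producing it. This is exactly the asserted formula.

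The one step needing real care is upgrading each $T[m]$-isotypic component from a rank-$1$ coherent sheaf to a line bundle: this rests on flatness of $\underline m$ (so that $\underline m_*D$ is locally free) together with the fact that a direct summand of a finite free module over a local ring is free. Everything else is bookkeeping — fixing the normalization $a_\rho\in\{0,\dots,m-1\}$, tracking the floors, getting $L_j^\vee$ rather than $L_j$, and checking that the local identifications glue and are independent of all choices, the crucial point being that $\mathrm{div}(\chi^u)$ is principal. As an alternative one can follow Achinger's argument: reduce first to $X$ affine, where the claim becomes the direct statement that $\C[\sigma^\vee\cap M]$, viewed as a module over its Frobenius subring via $\chi^u\mapsto\chi^{mu}$, breaks into the displayed rank-one pieces, and then patch over the invariant affine cover.
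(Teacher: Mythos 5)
The paper does not prove this statement: it is quoted from Thomsen's paper, with Achinger's article cited for a short proof. Your argument is correct and is essentially that short proof — decompose $\underline{m}_*\OO_X(D)$ by the $M/mM$-grading (equivalently, by characters of the kernel $T[m]$ of the isogeny on the torus), check each graded piece is a line bundle using flatness, and identify it on the invariant affine charts by the floor/remainder computation $c_\rho+\langle u,n_\rho\rangle=mb_\rho+a_\rho$. The only step I would spell out a little more is the identification of the admissible tuples $(a_\rho)$ with the cosets $\bar u\in M/mM$: injectivity of $M/mM\to(\Z/m)^{\Sigma(1)}$ and the equivalence ``$(a_\rho-c_\rho)$ lies in the image of $M$ mod $m$ $\Leftrightarrow$ $\frac 1m(-D+\sum_\rho a_\rho D_\rho)$ is an integral class'' both rest on $\Pic(X)\cong\mathrm{Cl}(X)$ being finitely generated and torsion-free, which holds here since $X$ is a smooth projective toric variety; with that remark added, your proof is complete and matches the cited one.
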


\begin{ex}{\rm
Let $X=\P^2\times\P^1$ and let us denote the pull-backs of the hyperplane classes from $\P^2$ and $\P^1$ by $h$ and $f$ respectively, as before. There are five torus-invariant  irreducible divisors, three of them lie in the class $h$ and two of them in the class $f$.
It follows that
$$\underline{m}_* \OO =\OO \oplus \OO(-f)^{\oplus (m-1)} \oplus \OO(-h)^{\oplus r_1}
\oplus \OO(-h-f)^{\oplus r_2} \oplus \OO(-2h)^{\oplus r_3} \oplus \OO(-2h-f)^{\oplus r_4},
$$
where $r_1, r_3$ grow like $m^2$ and $r_2, r_4$ grow like $m^3$. }
\end{ex}

\begin{ex}\label{ex:NonStablePullBack}{\rm
Notice that Frobenius pull-back in general does not preserve $\overline{\beta}$-stability. Indeed, in the case of the blow-up of $\P^3$ in a point, we can consider $\OO(h)$. It is not too hard to check that $\OO(h)$ is $\overline{\beta}$-stable, while for any $m\geq 2$, the pull-back
$\underline{m}^* \OO(h)= \OO(mh)$ is not $\overline{\beta}$-semistable (see \cite{Sch16:counterexample} and Section \ref{sec:examples2} for more details).}
\end{ex}

\subsection{Proof of Theorem \ref{thm:toric2}: the integral case}\label{subsec:integral}

We use the statement in \cite[Conjecture 5.3]{BMS14:abelian_threefolds} (or Lemma \ref{lem:reduction_alpha_0} in the present note).
Let $E$ be a $\overline{\beta}$-stable object.
In this section, we assume that $\overline{\beta}(E)\in\Z$.
By tensoring by multiples of $\OO_X(H)$, we can assume that $\overline{\beta}(E)=0$.
We want to show $\ch_3(E)\leq 0$.

Assume the contrary $\ch_3(E)>0$.
By using the Hirzebruch-Riemann-Roch Theorem we can compute
\[
\chi(\OO_X,\underline{m}^*E) = m^3 \ch_3(E) + O(m^2).
\]
If $\underline{m}^*E$ were $\overline{\beta}$-semistable, we could easily get a contradiction.
Unfortunately, in general, it is not true, as remarked in Example \ref{ex:NonStablePullBack}.
But we can still use the push-forward $\underline{m}_*$ and Theorem \ref{thm:ToricDecomposition}.
Indeed, by adjointness, we have
\[
\chi(\OO_X,\underline{m}^*E) = \chi \left((\underline{m}_*\OO_X)^\vee, E\right) \leq \hom \left((\underline{m}_*\OO_X)^\vee, E\right) + \ext^2 \left((\underline{m}_*\OO_X)^\vee, E\right).
\]
The last inequality follows since, given a line bundle $L$ on $X$, we have $\hom(L,E[i])=0$ if $i< -1$ or $i>3$.
We want to prove that the right hand side has order $\leq m^2$.
In the notation of Theorem \ref{thm:ToricDecomposition}, we have $\underline{m}_*\OO_X = \oplus (L_j^\vee)^{\oplus \eta_j}$, where $L_j = \OO_X(\frac{1}{m} \sum_\rho a_\rho D_\rho)$.

First of all, since $H$ is ample and $L_j$ is effective, we have
\[
H^2 \cdot L_j \geq 0
\]
and $H^2 \cdot L_j=0$ if and only if $L_j=\OO_X$.
Hence, $L_j\in\Coh^{\beta=0}(X)$, when $L_j\neq\OO_X$, and as remarked before, by \cite{BMS14:abelian_threefolds}, they are all $\nu_{\alpha,0}$-stable, for all $\alpha>0$.
If $H \cdot L_j^2>0$, then
\[
\lim_{\alpha\to 0}\nu_{\alpha,0}(L_j) > 0 = \lim_{\alpha\to 0} \nu_{\alpha,0}(E)
\]
and so $\Hom(L_j,E)=0$.
If $H \cdot L_j^2=0$, then by our assumption $a_\rho=0$ for all but at most $3$ rays $\rho$. Hence, $\eta_j$ has order at most $m^2$: there are at most three non-trivial coefficients $a_\rho$, with the property $0 \leq a_\rho < m$, and related by a linear equation defining $L_j$ in Theorem \ref{thm:ToricDecomposition}.
Summing up, we have
\[
\hom\left((\underline{m}_*\OO_X)^\vee,E \right) = \hom(\OO_X,E) + \sum_{j\,:\, H \cdot L_j^2=0} \eta_j \cdot \hom(L_j,E) = O(m^2).
\]

To bound the $\ext^2$, by Serre duality, we have
\[
\ext^2(L_j,E) = \hom(E, L_j\otimes \omega_X [1]).
\]
Since $\omega_X = \OO_X(-\sum_\rho D_\rho)$, we have
\[
L_j' := L_j\otimes \omega_X = \OO_X\left(-\frac{1}{m} \sum_\rho (m-a_\rho) D_\rho \right).
\]
Since $0\leq a_\rho < m$, then $m - a_\rho >0$, for all $\rho$.
Therefore, $H^2 \cdot L_j'<0$ and $H \cdot (L_j')^2>0$.
Hence, $L_j'[1]\in\Coh^{\beta=0}(X)$ and
\[
\lim_{\alpha\to 0}\nu_{\alpha,0}(L_j'[1]) < 0 = \lim_{\alpha\to 0} \nu_{\alpha,0}(E).
\]
Again by stability of $L_j'[1]$ we have
\[
\hom(E, L_j\otimes \omega_X [1])=0
\]
for all $j$.
It follows that
\[
\chi(\OO_X,\underline{m}^*E) = O(m^2),
\]
giving the required contradiction.

\subsection{Proof of Theorem \ref{thm:toric2}: the rational case}\label{subsec:rational}

In this section, we assume that $\overline{\beta}(E)\in\Q\setminus\Z$.
We write $\overline{\beta}(E)=\frac{p}{q}$, with $p$ and $q$ coprime, $q>0$.
We want to show that $\ch_3^{p/q}(E)\leq0$.
As before, by the Hirzebruch-Riemann-Roch Theorem, we have
\[
\chi\left(\OO_X,\underline{m}^*\left( \underline{q}^*E \otimes \OO_X(-pH) \right) \right) =
m^3 q^3 \ch_3^{p/q}(E) + O(m^2),
\]
and, by adjointness,
\begin{align*}
&\chi\left(\OO_X,\underline{m}^*\left( \underline{q}^*E  \otimes \OO_X(-pH) \right) \right) =
\chi\left((\underline{mq}_* \OO_X(-mpH))^\vee,E\right)\\
& \qquad \leq \hom\left((\underline{mq}_* \OO_X(-mpH))^\vee,E\right) +
\ext^2\left((\underline{mq}_* \OO_X(-mpH))^\vee,E\right).
\end{align*}
By Theorem \ref{thm:ToricDecomposition},
\[
\underline{mq}_* \OO_X (-mpH) = \bigoplus_j \left(L_j^\vee\right)^{\oplus \eta_j},
\]
where
\[
L_j = \OO_X\left( \frac{1}{mq} \left( mpH + \sum_\rho a_\rho D_\rho\right)\right),
\]
and $0 \leq a_\rho < mq$.
Therefore,
\[
\ch^{p/q}_1(L_j)= L_j \otimes \OO_X\left(- \frac{p}{q}H\right) = \OO_X \left(\frac{1}{mq} \sum_\rho a_\rho D_\rho \right)
\]
is an effective divisor and cannot be $\OO_X$ because $p/q$ is not an integer.
It follows that $H^2 \cdot \ch_1^{p/q}(L_j)>0$ for all $j$.
Moreover
\[
H \cdot \ch_2^{p/q}(L_j)= \frac{1}{2} H \cdot \ch_1^{p/q}(L_j)^2\geq 0.
\]
As in the $\overline{\beta}(E) = 0$ case, the equality holds if and only if the corresponding $\eta_j$ has order at most $m^2$, since $q$ is constant.
If $H \cdot \ch_2^{p/q}(L_j) > 0$, then
\[
\lim_{\alpha\to 0}\nu_{\alpha,p/q}(L_j) > 0 = \lim_{\alpha\to 0} \nu_{\alpha,p/q}(E)
\]
and so $\hom(L_j,E)=0$.
As in the integral case, this shows that
\[
\hom\left((\underline{mq}_* \OO_X(-mpH))^\vee, E \right) = O(m^2).
\]
The vanishing of $\ext^2$ follows as in the integral case, by using Serre duality and stability.

\subsection{Proof of Theorem \ref{thm:toric2}: the irrational case}\label{subsec:irrational}

Finally, we assume that $\overline{\beta}(E)\in\R\setminus\Q$.
By assumption, there exists $\epsilon>0$ such that $E$ is  $\nu_{\alpha,\beta}$-stable for all $(\alpha,\beta)$ in
\[
V_\epsilon := \left\{(\alpha,\beta)\in\R_{>0}\times\R\,:\, 0< \alpha<\epsilon,\,  \overline{\beta}(E)-\epsilon<\beta <\overline{\beta}(E)+\epsilon \right\}.
\]

By the Dirichlet approximation theorem, there exists a sequence $\left\{\beta_n=\frac{p_n}{q_n}\right\}_{n\in\N}$ of rational numbers such that
\begin{equation*}\label{eq:Dirichlet}
\left| \overline{\beta}(E) - \frac{p_n}{q_n} \right| < \frac{1}{q_n^2}< \epsilon
\end{equation*}
for all $n$, where $q_n \to +\infty$ as $n\to+\infty$.

We compute, for $n\gg0$,
\[
\chi\left(\OO_X,\underline{q_n}^*E\otimes \OO_X(-p_nH)\right) = q_n^3 \ch_3^{p_n/q_n}(E) + O(q_n^2) \geq q_n^3 \ch_3^{\overline{\beta}(E)}(E) + O(q_n^2).
\]
The last inequality follows since, by definition, $\ch_3^\beta(E)$ has a local minimum at $\beta=\overline{\beta}(E)$.

As in the previous cases, we assume for a contradiction that $\ch_3^{\overline{\beta}(E)}(E)>0$, and we want to bound
\[
\chi\left((\underline{q_n}_* \OO_X(-p_nH))^\vee,E\right)
\leq \hom\left((\underline{q_n}_* \OO_X(-p_nH))^\vee,E\right) +
\ext^2\left((\underline{q_n}_* \OO_X(-p_nH))^\vee,E\right)
\]
for $n\gg0$.
By Theorem \ref{thm:ToricDecomposition},
\[
\underline{q_n}_* \OO_X (-p_n H) = \bigoplus_j \left(\left(L_j^{(n)}\right)^\vee\right)^{\oplus \eta_j^{(n)}},
\]
where
\[
L_j^{(n)} =\OO_X\left( \frac{1}{q_n} \left( p_nH + \sum_\rho a_\rho^{(n)} D_\rho\right)\right),
\]
and $0 \leq a_\rho^{(n)} < q_n$.
Notice that, since $L_j^{(n)}$ is an integral divisor, $\frac{p_n}{q_n}$ and  $\frac{a_\rho^{(n)}}{q_n}$ are both universally bounded with respect to $n$, there is only a finite number of isomorphism classes of $L_j^{(n)}$ for all $n$.

We have that
\[
\ch^{p_n/q_n}_1(L_j^{(n)})= L_j^{(n)} \otimes \OO_X\left(- \frac{p_n}{q_n}H\right) = \OO_X \left(\frac{1}{q_n} \sum_\rho a_\rho^{(n)} D_\rho \right)
\]
is an effective divisor, and cannot be $\OO_X$ for $n \gg 0$.
Since $\sum_\rho a_\rho^{(n)} D_\rho$ is an integral divisor,
we have
\[
H^2 \cdot \sum_\rho a_\rho^{(n)} D_\rho \geq 1,
\]
so that
\[
H^2 \cdot \ch_1^{p_n/q_n}\left(L_j^{(n)}\right) \geq \frac{1}{q_n}.
\]
Now,
\begin{equation*}\label{eq:c1bar}
H^2 \cdot \ch_1^{\overline{\beta}}\left(L_j^{(n)}\right) = H^2 \cdot \ch_1^{p_n/q_n}\left(L_j^{(n)}\right) + H^3 \left(\frac{p_n}{q_n}-\overline{\beta}\right) \geq \frac{1}{q_n} -  \frac{H^3}{q_n^2} > 0
\end{equation*}
for $n \gg 0$. Therefore $L_j^{(n)}$ belongs to $\Coh^{\overline{\beta}}(X)$ for $n \gg 0$.
By definition,
\begin{align*}
H \cdot \ch_1^{\overline{\beta}}\left(L_j^{(n)}\right)^2 &= H \cdot \left( \ch_1^{p_n/q_n}\left(L_j^{(n)}\right) + \left(\frac{p_n}{q_n}-\overline{\beta}\right)H \right)^2\\
&> H \cdot \left( \ch_1^{p_n/q_n}\left(L_j^{(n)}\right)\right)^2 + 2 \left( \frac{p_n}{q_n}-\overline{\beta} \right) H \cdot \ch_1^{p_n/q_n}\left(L_j^{(n)}\right)\\
&> \frac{1}{q_n^3} \, H \cdot \left( q_n \left( \sum a_\rho^{(n)} D_\rho\right)^2 - 2 H \cdot \sum a_\rho^{(n)} D_\rho \right). \end{align*}

If
\[
f_{n,j} := H \cdot \ch_2^{\overline{\beta}}\left(L_j^{(n)}\right) = \frac{1}{2} H \cdot \ch_1^{\overline{\beta}}\left(L_j^{(n)}\right)^2 > 0,
\]
we can argue as in the previous cases to show that
$\Hom(L_j^{(n)},E)=0$ by stability. Since $q_n \to \infty$, we
can have $f_{n,j} \leq 0$ only if the ratio
\[
\frac{2 H^2 \cdot \sum a_\rho^{(n)} D_\rho}{H \cdot \left( \sum a_\rho^{(n)} D_\rho\right)^2}
\]
is not bounded from above for $n \gg 0$, since $H \cdot  \left( \sum a_\rho^{(n)} D_\rho\right)^2 \geq 0$ by assumption.
But
\[
2 H^2 \cdot \sum a_\rho^{(n)} D_\rho \leq K \cdot \sum a_\rho^{(n)},
\]
for a constant $K>0$ which is independent on $n$ and $j$, and
\[
H \cdot \left( \sum a_\rho^{(n)} D_\rho\right)^2 = \sum_{\rho,\tau} a_\rho^{(n)} a_\tau^{(n)} (H \cdot D_\rho \cdot D_\tau).
\]
Fix $\rho_0$ such that $a_{\rho_0}^{(n)} \neq 0$.
Then
\[
a_{\rho_0}^{(n)} \leq \sum_{\tau} a_{\rho_0}^{(n)} a_\tau^{(n)} (H \cdot D_{\rho_0} \cdot D_\tau)
\]
unless $a_\tau^{(n)}=0$ for all $\tau$ for which $H \cdot D_{\rho_0} \cdot D_\tau\neq 0$. That is, we can have $a_\tau^{(n)} \neq 0$ possibly only for $\tau$ such that $H \cdot D_{\rho_0} \cdot D_\tau = 0$.
By our assumption, the latter equality means that $D_\tau$ has the same divisor class as $D_{\rho_0}$, $H \cdot D_{\rho_0}^2=0$.
Indeed, we can show first that $H \cdot D_{\rho_0} \cdot D_\tau = 0$ implies that $H \cdot D_\tau^2 \leq 0$. Assume not and set $\lambda = \tfrac{H^2 \cdot D_{\rho_0}}{H^2 \cdot D_\tau}$. The Hodge Index Theorem says
\[
0 \geq H \cdot (D_{\rho_0} - \lambda D_\tau)^2 = \lambda^2 H \cdot D_\tau^2 > 0,
\]
a contradiction.

By the assumption in the theorem this implies $H \cdot D_\tau^2 = 0$. Thus, we get $H \cdot (D_\tau + D_{\rho_0})^2 = 0$. By the extremality assumption, we then have that $D_\tau$ is in the same class as $D_{\rho_0}$.
It follows that $H \cdot \left(\ch_1^{p_n/q_n}\left(L_j^{(n)}\right)\right)^2=0$.

Moreover, as in the integral case, we can bound the number of torus-invariant divisors in the same class of such a $D_{\rho_0}$. Summing up, if $f_{n,j}\leq0$, then the multiplicity $\eta_j^{(n)}$ has order at most $q_n^2$.
Together with the finiteness of the isomorphism classes of
$L_j^{(n)}$, we have that
\[
\hom\left((\underline{q_n}_* \OO_X(-p_nH))^\vee,E\right) = \sum_{j\,:\,H.\left(L_j^{(n)}\right)^2=0} \eta_j^{(n)} \hom(L_j^{(n)},E) = O(q_n^2).
\]

As before, the vanishing of $\ext^2$ follows as in the integral case, by using Serre duality and stability.
This shows that $\ch_3^{\overline{\beta}(E)}(E)\leq0$ also in this case, and therefore completes the proof of Theorem \ref{thm:toric2}.

\section{Details about the blow-up of $\P^3$ in a point}
\label{sec:examples2}

Theorem \ref{thm:main} implies the existence of Bridgeland stability conditions on all Fano threefolds. However, it would be interesting to know what the optimal class $\Gamma$ is. A condition that is coherent with the case of Picard rank $1$ would be $\Gamma \cdot H = 0$.

In this section we will study the blow-up of $\P^3$ in a point more carefully. We use the notations for divisors on $X$ which were introduced in Section \ref{subs:blow-up-point}. In \cite{Sch16:counterexample}, it was shown that the line bundle $\OO(h)$ does not satisfy Theorem \ref{thm:main} with $\Gamma=0$.
In \cite{Mar16:counterexample}, it was shown that the structure sheaf $\OO_e$ of the exceptional divisor also does not satisfy Theorem \ref{thm:main} with $\Gamma=0$.
We will do the following computation.

\begin{prop}
Line bundles on the blow-up of $\P^3$ in a point and $\OO_e$ satisfy Theorem \ref{thm:main} with $\Gamma=k(h^2+2e^2)$, for $\frac{1}{48}\leq k\leq \frac{3}{98}+\frac{2\sqrt{2}}{147}$.
\end{prop}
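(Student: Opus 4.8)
The plan is to verify the conclusion of Theorem~\ref{thm:main} directly for each of these objects with $\Gamma=k(h^2+2e^2)$. The crucial feature is that $\Gamma\cdot H=(h^2+2e^2)\cdot(2h-e)=0$, so $\Gamma\cdot\ch_1^\beta(E)=\Gamma\cdot\ch_1(E)$ for every $\beta$, and this is invariant under $E\mapsto E\otimes\OO_X(H)$ and changes only sign under derived duality; explicitly $\Gamma\cdot\ch_1(\OO_X(ah+be))=k(a+2b)$ and $\Gamma\cdot\ch_1(\OO_e)=2k$. Hence, exactly as in the reductions used in Lemma~\ref{lem:reduction_alpha_0} (tensoring by powers of $\OO_X(H)$, and Proposition~\ref{prop:tilt_derived_dual}), it suffices to treat $\OO_e$, which has $\overline{\beta}=\tfrac12\in[0,1)$, and those line bundles $L=\OO_X(ah+be)$ with $\overline{\beta}(L)\in[0,1)$; up to tensoring by $\OO_X(H)$ there is exactly one such line bundle for each value of $u:=a+2b\in\Z$. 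The first step is to record the closed formula $\ch_3^\beta(\OO_X(D))=\tfrac16(D-\beta H)^3$, which with $h^3=e^3=1$ and $h\cdot e=0$ becomes $\ch_3^\beta(\OO_X(ah+be))=\tfrac16\big((a-2\beta)^3+(b+\beta)^3\big)$, together with $\ch_3^\beta(\OO_e)=\tfrac16-\tfrac\beta2+\tfrac{\beta^2}{2}$ and $\overline{\beta}(\OO_X(ah+be))=\tfrac{4a+b-\sqrt2\,|u|}{7}$.

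For the objects that are $\overline{\beta}$-stable the argument is that of Lemma~\ref{lem:reduction_alpha_0}: since $\Gamma\cdot H\ge0$ and $\ch_0\ge0$, the expression $\Gamma\cdot\ch_1^\beta(E)+\tfrac{\alpha^2}{6}H^2\cdot\ch_1^\beta(E)-\ch_3^\beta(E)$ is non-increasing as $\alpha$ decreases along the hyperbola $\nu_{\alpha,\beta}(E)=0$, so the inequality at every point where $E$ is stable follows from its value at $(0,\overline{\beta}(E))$, i.e.\ from $\ch_3^{\overline{\beta}}(E)\le\Gamma\cdot\ch_1(E)$. Substituting $\overline{\beta}$ into the formulas above yields $\ch_3^{\overline{\beta}}(\OO_X(ah+be))=\tfrac{9u^3-4\sqrt2\,|u|^3}{294}$ and $\ch_3^{\overline{\beta}}(\OO_e)=\tfrac1{24}$, so the required inequalities read $k\ge\tfrac{u^2(9-4\sqrt2)}{294}$ for $u>0$, $k\le\tfrac{u^2(9+4\sqrt2)}{294}$ for $u<0$, $0\ge0$ for $u=0$, and $\tfrac1{24}\le2k$ for $\OO_e$. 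The $\overline{\beta}$-stable representatives are $\OO_X$ ($u=0$), $\OO_X(h)$ ($u=1$, $\overline{\beta}$-stable by Example~\ref{ex:NonStablePullBack}), $\OO_X(h-e)$ ($u=-1$, whose $\overline{\beta}$-stability one checks directly), and $\OO_e$; the binding constraints are therefore $k\ge\tfrac1{48}$ coming from $\OO_e$ and $k\le\tfrac{9+4\sqrt2}{294}=\tfrac3{98}+\tfrac{2\sqrt2}{147}$ coming from $\OO_X(h-e)$, the one from $\OO_X(h)$ being strictly weaker. Conversely, if $k$ lies outside $\big[\tfrac1{48},\tfrac3{98}+\tfrac{2\sqrt2}{147}\big]$ then $\OO_e$ or $\OO_X(h-e)$ already violates Theorem~\ref{thm:main} at $(0,\overline{\beta})$, so the interval is sharp.

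It remains to handle the line bundles with $|u|\ge2$ after the reductions — for instance $\OO_X(2h)$, which is not $\overline{\beta}$-semistable by Example~\ref{ex:NonStablePullBack} — for which the value of the above expression at $\alpha=0$ would be too negative to conclude. For such an $L$ the plan is to determine its largest actual wall together with the Jordan--H\"older factors there, to observe that these factors are again line bundles of strictly smaller $|u|$ or twists of $\OO_e$ (e.g.\ $0\to\OO_X(H)\to\OO_X(2h)\to\OO_e\to0$), and then, using additivity of $\ch_3^\beta$ and $\Gamma\cdot\ch_1^\beta$ together with the monotonicity above, to deduce the inequality for $L$ at every stable point from the inequalities for the factors at the top of that wall; here $\Gamma\cdot\ch_1(\OO_X(H))=0$ and $\overline{\Delta}_H(\OO_X(H))=0$, so the expression vanishes identically on $\nu_{\alpha,\beta}(\OO_X(H))=0$, and the only real content is the $\OO_e$-type factors (which are twists of $\OO_e$ by $\OO_X(H)$ since $H|_e=\OO_{\P^2}(1)$), covered by $k\ge\tfrac1{48}$. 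Running this as an induction on $|u|$ then closes the argument. I expect the main obstacle to be precisely this last step: controlling the destabilizing walls of the non-$\overline{\beta}$-stable line bundles on the blow-up, and verifying that $\OO_e$ (and the twists of $\OO_X$ occurring as factors) remain tilt-stable on the relevant arcs, so that the reduction to the already-settled cases is legitimate.
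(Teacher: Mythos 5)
Your treatment of the cases you actually settle agrees with the paper: after twisting by powers of $\OO_X(H)$ the line bundles are the $\OO_X(mh)$, $m\in\Z$ (your $u$ equals $m$), the value of the left-hand side of Theorem~\ref{thm:main} at $(0,\overline{\beta})$ is $\tfrac{9+4\sqrt2}{294}\,m^3-km$ for $m\le 0$ and $\tfrac{9-4\sqrt2}{294}-k$ for $m=1$, and together with $\tfrac1{24}-2k$ for $\OO_e$ this yields exactly the two binding constraints $k\ge\tfrac1{48}$ and $k\le\tfrac{3}{98}+\tfrac{2\sqrt2}{147}$. One remark: for these cases no stability analysis is needed at all. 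Since $\Gamma\cdot H\ge0$ and $\ch_0\ge0$, the quantity $\Gamma\cdot\ch_1^{\beta}+\tfrac{\alpha^2}{6}H^2\cdot\ch_1^{\beta}-\ch_3^{\beta}$ only increases as one moves up the hyperbola $\nu_{\alpha,\beta}=0$ from $(0,\overline{\beta})$ (the other branch being covered by the dual line bundle, which is again in the family), so a favourable value at $(0,\overline{\beta})$ settles the inequality at every point of the hyperbola whether or not the bundle is $\overline{\beta}$-stable. In particular the bundles with $u\le-2$ are already done, and listing all $|u|\ge2$ as ``remaining'' overstates what is left; also the $\overline{\beta}$-stability of $\OO_X(h)$ and $\OO_X(h-e)$ is not needed for the proposition (only for your side claim of sharpness).

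The genuine gap is the case $u\ge2$, i.e.\ $\OO_X(mh)$ with $m\ge2$ --- precisely the non-$\overline{\beta}$-semistable bundles where the statement requires real work --- which you leave as an unexecuted plan and flag yourself as the main obstacle. Moreover the plan as stated (induction on $|u|$ via the Jordan--H\"older factors at the largest actual wall) rests on unverified assertions: you would need to identify the largest wall and show its factors are line bundles of smaller $|u|$ or twists of $\OO_e$, and that these factors are (semi)stable at the top of the wall so their inequalities can be invoked there; e.g.\ for $m\ge4$ the subobject $\OO_X(mh-e)=\OO_X((m-2)h)\otimes\OO_X(H)$ need not be semistable at that point, so the claimed factor structure is not automatic. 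The paper's proof shows none of this is necessary: the single injection $\OO_X(mh-e)\hookrightarrow\OO_X(mh)$ gives the wall $7\alpha^2+7\beta^2-7\beta-2m^2+4m=0$, which is nonempty for $m\ge2$ and destabilizes $\OO_X(mh)$ in its interior, so every point of the hyperbola where $\OO_X(mh)$ is stable lies at or above the top point $\beta=\tfrac12$, $\alpha^2=\tfrac{2m^2-4m+7/4}{7}$; evaluating the left-hand side of Theorem~\ref{thm:main} at that point gives $-\tfrac{m^3}{42}+\tfrac{m^2}{21}-km<0$ for all $m\ge2$ and $k>0$, and the monotonicity above then covers all stable points --- no decomposition into factors, no induction, no stability of factors. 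Until your $u\ge2$ step is carried out (or replaced by such a direct computation at the destabilizing wall), the proof is incomplete.
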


Note that here $\Gamma \cdot H=0$.

\begin{proof}
Recall that $H=2h-e$, so by twisting a suitable choice of $\OO(H)$, we can assume that the line bundle is of the form $\OO(mh)$ for an integer $m$.

The hyperbola $\nu_{\alpha, \beta}(\OO(mh)) = 0$ is given by the equation
\[
7\alpha^2-7\beta^2+8m\beta-2m^2=0.
\]
So when $m \leq 0$, then $\overline\beta = \frac{4+\sqrt 2}{7}m$.
Evaluating the left hand side of the inequality in Theorem \ref{thm:main}, we get
\begin{align*}
&\;\;\;\;\;\ch_3^\beta(\OO(mh)) - \Gamma \cdot \ch_1^\beta(\OO(mh)) - \frac{\alpha^2}{6} H^2 \cdot \ch_1^{\beta}(\OO(mh)) \\
&= \frac{m^3}{6}-\overline \beta m^2+ 2\overline \beta ^2m-\frac{7}{6}\overline \beta ^3-km \\
&= \left(\frac{3}{98}+\frac{2\sqrt{2}}{147}\right)m^3-km.
\end{align*}
So when $k\leq \frac{3}{98}+\frac{2\sqrt{2}}{147}$, the above function is less than or equal to $0$ for any integer $m\leq 0$, and the theorem holds.

When $m=1$ holds, then $\overline\beta = \frac{4-\sqrt 2}{7}$. A similar computation shows that the inequality holds for $m=1$ when $k\geq \frac{3}{98}-\frac{2\sqrt{2}}{147}$.

Now we focus on the case when $m\geq 2$. In this case $\OO(mh)$ is not $\overline\beta$-stable. To see this, just note that there always exists a nontrivial map $\OO(mh-e) \to \OO(mh)$. The wall induced by this map is given by the equation
\[
7\alpha^2+7\beta^2-7\beta-2m^2+4m=0.
\]
When $m\geq 2$, this wall is non-empty, and $\OO(mh)$ is destabilized on the wall. So in order to show the theorem, it suffices to check the inequality at this wall.

Note that the top point of the wall, which is also the intersection with the hyperbola $\nu_{\alpha, \beta}(\OO(mh)) = 0$, has coordinates $\beta=\frac{1}{2}$ and $\alpha^2=\frac{2m^2-4m+7/4}{7}$. So we have
\begin{align*}
&\;\;\;\;\;\ch_3^\beta(\OO(mh)) - \Gamma \cdot \ch_1^\beta(\OO(mh)) - \frac{\alpha^2}{6} H^2 \cdot \ch_1^{\beta}(\OO(mh)) \\
&= \frac{m^3}{6}-\beta m^2+ 2\beta ^2m-\frac{7}{6} \beta ^3-km - \frac{2m^2-4m+7/4}{42}(4m-7\beta)\\
&= -\frac{m^3}{42}+\frac{m^2}{21}-km.
\end{align*}
It is easy to see that this function is negative for $m\geq 2$ and $k>0$.

For the structure sheaf $\OO_e$, we have $\overline{\beta}=\frac{1}{2}$ and
\[
\ch_3^{1/2}(\OO_e) - \Gamma\cdot \ch_1^{1/2}(\OO_e) = \frac{1}{24} - 2k,
\]
from which the result follows.
\hfill $\Box$
\end{proof}



\bibliographystyle{amsalpha}
\bibliographymark{References}

\def\cprime{$'$} \def\cprime{$'$}

\end{document}